\newtheorem{theorem}{Theorem}[section]
\newtheorem{corollary}[theorem]{Corollary}
\newtheorem{lemma}[theorem]{Lemma}
\newtheorem{proposition}[theorem]{Proposition}
\newtheorem*{theorem no number}{Theorem}
\theoremstyle{definition}
\newtheorem{definition}[theorem]{Definition}
\newtheorem{example}[theorem]{Example}
\newtheorem*{acknow}{Acknowledgements}
\newtheorem*{notation}{Notation}
\newtheorem{remark}[theorem]{Remark}
\newtheorem*{remark no number}{Remark}
\theoremstyle{remark}
\newtheorem*{claim no number}{Claim}
\newcommand{\R}{{\mathbb{R}}} 
\newcommand{\Q}{{\mathbb{Q}}}
\newcommand{\Z}{{\mathbb{Z}}}
\newcommand{\N}{{\mathbb{N}}}
\newcommand{\bigslant}[2]{{\raisebox{.2em}{$#1$}\left/\raisebox{-.2em}{$#2$}\right.}}
\newcommand\restr[2]{{
  \left.\kern-\nulldelimiterspace 
  #1 
  \vphantom{\big|} 
  \right|_{#2} 
  }}
\title{\textbf{Polytope Novikov Homology}}
\author{Alessio Pellegrini \\ \\ \textit{Department of Mathematics, ETH Z\"urich, Switzerland} \\ alessio.pellegrini@math.ethz.ch}
\begin{document}

\maketitle \date

\abstract{Let $M$ be a closed manifold and $\mathcal{A} \subseteq H^1_{\mathrm{dR}}(M)$ a polytope. For each $a \in \mathcal{A}$ we define a Novikov chain complex with a multiple finiteness condition encoded by the polytope $\mathcal{A}$. The resulting polytope Novikov homology generalizes the ordinary Novikov homology. We prove that any two cohomology classes in a prescribed polytope give rise to chain homotopy equivalent polytope Novikov complexes over a Novikov ring associated to said polytope. As applications we present a novel approach to the (twisted) Novikov Morse Homology Theorem and prove a new polytope Novikov Principle. The latter generalizes the ordinary Novikov Principle and a recent result of Pajitnov in the abelian case.}

\section{Introduction}

Given a closed manifold $M$ and a cohomology class $a \in H^1_{\mathrm{dR}}(M)$, one can define the so called \emph{Novikov homology} $\mathrm{HN}_\bullet(a)$, introduced by Novikov \cite{Novikov1981, Novikov1982}. Roughly speaking, $\mathrm{HN}_\bullet(a)$ is defined by picking a Morse representative $\alpha \in a$ and a cover on which $\alpha$ pulls back to an exact form $d\tilde{f}$, and then mimicking the definition of Morse homology using $\tilde{f}$ as the underlying Morse function. The groups $\mathrm{HN}_\bullet (a)$ enjoy three distinctive features: 
\begin{itemize}
\item \textbf{(Novikov-module)} The Novikov homology $\mathrm{HN}_\bullet (a)$ is a finitely-generated module over the so called \emph{Novikov ring} $\mathrm{Nov}(a)$. 
\item \textbf{(Cohomology-invariance)} The Novikov homology $\mathrm{HN}_\bullet(a)$ does not depend on the choice of Morse representative $\alpha$ of the prescribed cohomology class $a$.

\item \textbf{(Ray-invariance)} Morse forms on the same positive half-ray induce identical Novikov homologies: $\mathrm{HN}_\bullet(r \cdot a) \cong \mathrm{HN}_\bullet(\alpha)$ for all $r>0$.
\end{itemize}

The (twisted) Novikov Morse Homology Theorem says that $\mathrm{HN}_\bullet(a)$ is isomorphic to the twisted singular homology $H_\bullet\left(M,\underline{\mathrm{Nov}}(a)\right)$.\footnote{See Corollary \ref{corollary NHT for real} for a precise statement.} By using the Novikov Morse Homology Theorem one can thus investigate the relation between $\mathrm{HN}_\bullet(a)$ and $\mathrm{HN}_\bullet(b)$, when $a \neq b$, by studying $H_\bullet\left(M,\underline{\mathrm{Nov}}(a)\right)$ and $H_\bullet\left(M,\underline{\mathrm{Nov}}(b)\right)$ and their respective twisted coefficient systems $\underline{\mathrm{Nov}}(a)$ and $\underline{\mathrm{Nov}}(b)$ instead. The latter is a ``purely" algebraic task. 

In this article we refine the construction of Novikov homology $\mathrm{HN}_\bullet(a)$ and define what we call \emph{polytope Novikov homology} $\mathrm{HN}_\bullet(a,\mathcal{A})$ by including multiple finiteness conditions imposed by a polytope $\mathcal{A}=\langle a_0,\dots, a_k \rangle \subseteq H^1_{\mathrm{dR}}(M)$ containing $a$. These polytope Novikov homology groups $\mathrm{HN}_\bullet(a,\mathcal{A})$ retain the three features of $\mathrm{HN}(a)$ mentioned above, modulo replacing $\mathrm{Nov}(a)$ by a ``smaller" Novikov ring $\mathrm{Nov}(\mathcal{A})$. The Main Theorem in Section \ref{section Novikov Homology and Polytopes} gives a \emph{dynamical} relation between $\mathrm{HN}_\bullet(a,\mathcal{A})$ and $\mathrm{HN}_\bullet(b,\mathcal{A})$, i.e by staying in the realm of Novikov homology and not resorting to the algebraic counterpart of twisted singular homology.

\begin{theorem no number}[Main Theorem\footnote{See Theorem \ref{theorem main technical theorem} for the precise statement. Also note that the actual theorem contains a stronger chain level statement.}]
For every subpolytope $\mathcal{B} \subseteq \mathcal{A}$ and two cohomology classes $a,b \in \mathcal{A}$ there exists a commutative diagram
\begin{center}
\begin{tikzcd}
\mathrm{HN}_\bullet(a,\mathcal{A}) \arrow[d, hook] \arrow[r, "\cong"] & \mathrm{HN}_\bullet(b,\mathcal{A}) \arrow[d, hook] \\
\mathrm{HN}_\bullet \left( a,\mathcal{A}  |_{\mathcal{B}} \right) \arrow[r,"\cong"] & \mathrm{HN}_\bullet \left( b,\mathcal{A}  |_{\mathcal{B}} \right).
\end{tikzcd}
\end{center}
induced by continuation on the chain level.
\end{theorem no number}

The statement of the Main Theorem might be known to some experts in the field, but lacks a proof in the literature. Similar variants of the Main Theorem have been proved in different settings, most noteworthy are \cite{Ono2005, hutchings2008, groman2018symplectic, zhang2019}. For example, in \cite{Ono2005} Ono considers Floer-Novikov homology on a closed symplectic manifold\footnote{For the sake of simplicity we omit the precise conditions.} and proves:

\begin{theorem no number}[Ono \cite{Ono2005}]\label{theorem ono}
 If two symplectic isoptopies have fluxes that are close to each other, then their respective Novikov-Floer homologies are isomorphic. 
 \end{theorem no number}
The Novikov-Floer homologies mentioned in Ono's Theorem are defined over a common Novikov ring that takes into account several finiteness conditions simultaneously -- this modification is analogous to our implementation of polytopes. Within this analogy, the upper isomorphism in the Main Theorem corresponds to the isomorphism in Ono's Theorem, but with less assumptions: the nearby assumption of the fluxes in Ono's result would translate to a smallness assumption on $\mathcal{A}$, which is not needed. Let us mention that the formulation and setup of the Main Theorem comes closest to a recent result due to Groman and Merry \cite[Theorem 5.1]{groman2018symplectic}.

At the end of the paper we present two applications of the Main Theorem. In the first application we recover the aforementioned Novikov Morse Homology Theorem:\footnote{This is not a circular argument, since the Novikov Morse Homology Theorem is not used in Section \ref{section Novikov Homology and Polytopes}.} The proof, modulo details, goes as follows: taking $\mathcal{A}=\langle 0,a \rangle$, setting $\mathcal{B}=\langle a \rangle$, invoking the lower isomorphism in the Main Theorem, and unwinding the definitions reveals $$\mathrm{HN}_\bullet(a) \cong \mathrm{HM}_\bullet(f,\underline{
\mathrm{Nov}}(a)) ,$$ where the right hand side is Morse homology with local coefficients $\underline{\mathrm{Nov}}(a)$. The latter is known to be isomorphic to singular homology with twisted coefficients, for a quick proof see \cite[Theorem 4.1]{banyaga2019}, and thus we recover the Novikov Morse Homology Theorem. This line of reasoning is analogous to the proof of \cite[Theorem 5.3]{groman2018symplectic} and seems to be a novel approach to the Novikov Morse Homology Theorem: the proof draws a direct connection between Novikov and \emph{twisted} Morse homology instead of using the Novikov Principle and/or equivariant Morse homology, see \cite{latour1994, Pozniak1999, farber2004} for proofs of the Novikov Morse Homology Theorem using the latter.

The second application is concerned with a general \emph{polytope Novikov Principle}:\footnote{This is a slightly imprecise formulation, see Theorem \ref{theorem Polytop Novikov Principle} for the precise statement}

\begin{theorem no number}[Polytope Novikov Principle]
Let $\mathcal{B}\subseteq \mathcal{A}$ be a subpolytope. Then for every $a \in \mathcal{A}$ there exists a Morse representative $\alpha \in a$ such that
\begin{equation*}
\mathrm{CN}_\bullet\left(\alpha,\mathcal{A} |_{\mathcal{B}} \right) \simeq C_\bullet\left(\widetilde{M}_\mathcal{A}\right)\otimes_{\Z[\Gamma_\mathcal{A}]}\mathrm{Nov}(\mathcal{A}|_\mathcal{B}),
\end{equation*}
as Novikov-modules.
\end{theorem no number}

The proof idea is similar to the sketch above -- one relates the polytope Novikov complex to a a twisted Morse complex by including the $0$-vertex in the polytope $\mathcal{A}$ and using the Main Theorem. We call this the \emph{0-vertex trick} (cf. Lemma \ref{lemma 0 vertex trick}). To get from the twisted Morse complex to the equivariant singular chain complex we use a Morse-Eilenberg type result (cf. Lemma \ref{lemma Morse-Eilenberg}) and a chain homotopy equivalence $\mathrm{CM}(\tilde{h}) \simeq C_\bullet \left( \widetilde{M}_{\mathcal{A}} \right)$ over the group ring of deck transformations $\Z[\Gamma_{\mathcal{A}}]$. 

Immediate consequences of the polytope Novikov Principle include the ordinary Novikov Principle (cf. Corollary \ref{corollary ordinary NP}) and a recent ``conical" Novikov Principle \cite[Theorem 5.1]{Pajitnov2019}\footnote{See Theorem \ref{theorem Pajitnov} for a statement below.} in the abelian case (cf. Corollary \ref{corollary pajitnov NP}). 

\begin{remark no number}
Symplectic homology is a version of Floer homology well-suited to certain non-compact symplectic manifolds. In \cite{pellegrini2020} we combine ideas of Ono's Theorem, the magnetic case \cite{groman2018symplectic}, and of the present paper to construct a polytope Novikov symplectic homology, which is related to Ritter's twisted symplectic homology \cite{ritter2009}. The analogue of the Main Theorem remains true. Applications include Novikov number-type bounds on the number of fixed points of symplectomorphisms with prescribed flux on the boundary, and the study of symplectic isotopies of such maps.
\end{remark no number}

\begin{acknow}
I would like to thank Will Merry for all the helpful discussions and for encouraging me to flesh out the polytope picture in the case of Novikov homology. This work has been supported by the Swiss National Science Foundation (grant \# 182564).
\end{acknow}

\section{Novikov Homology and Polytopes}\label{section Novikov Homology and Polytopes}

\subsection{Definition and properties of ordinary Novikov homology}\label{subsection the chain complex....}
In this subsection we quickly recall the (ordinary) definition of the Novikov chain complex and its homology, together with some well known properties. The main purpose is to fix the notation for the remainder of the section. For a thorough treatment of Novikov homology, we recommend \cite{Sikorav1987, Pozniak1999, farber2004} and the recently published \cite{Banyaga2004}. For more details on the construction of the Novikov ring see for instance \cite[Chapter 4]{HoferSalamon1995}.

Fix once and for all a closed smooth oriented and connected finite-dimensional manifold $M$. For any Morse-Smale pair $(\alpha,g)$ one can define the \emph{Novikov chain complex} 
\begin{equation*}
(\mathrm{CN}_\bullet(\alpha,g), \partial_\bullet),
\end{equation*}
whose homology is called \emph{Novikov homology} of $(\alpha,g)$
\begin{equation*}
\mathrm{HN}_i(\alpha,g):=\bigslant{\ker \partial_i}{\mathrm{im} \, \partial_{i+1}}, \quad i \in \N_0.
\end{equation*}
It is a standard fact that two Morse-Smale pairs with cohomologous Morse forms induce isomorphic Novikov homologies, thus we shall write $\mathrm{HN}_\bullet (a)$ with $a=[\alpha]$ to denote the Novikov homology of pairs $(\alpha,g)$. 

\begin{notation}
Sometimes we will also omit the $g$ in the notation of the chain complex. Moreover, Latin lowercase letters, e.g. $a,b$, will typically denote cohomology classes, while the respective lowercase Greek letters are representatives in the corresponding cohomology classes, e.g. $\alpha \in a, \, \beta \in b$.
\end{notation}

Let us quickly recall the relevant definitions. Each cohomology class $a$ determines a period homomorphism $\Phi_a \colon \pi_1(M) \to \R$ defined by integrating any representative $\alpha \in a$ over loops $\gamma$ in $M$.\footnote{Cohomologous one-forms induce the same period homomorphism by Stokes' Theorem.} Denote by $\ker(a)$ the kernel of the period homomorphism $\Phi_a$ and let $\pi \colon \widetilde{M}_a \to M$ be the associated abelian cover, i.e. a regular covering with $\Gamma_a:=\mathrm{Deck}(\widetilde{M}_a) \cong \bigslant{\pi_1(M)}{\ker (a)}$. Then $\alpha$ pulls back to an exact form on $\widetilde{M}_a$, i.e. $\pi^*\alpha=d \tilde{f}_\alpha$ for some $\tilde{f}_\alpha \in C^{\infty}(\widetilde{M}_a)$. Define 
\begin{equation*}
 V_i(\alpha):= \bigoplus_{\tilde{x} \in \mathrm{Crit}_i(\tilde{f}_\alpha)} \Z \langle \tilde{x} \rangle, \quad i \in \N_0,
\end{equation*}
where $\mathrm{Crit}_i(\tilde{f}_\alpha)$ denotes the critical points of $\tilde{f}_\alpha$ with Morse index $i$. The $i$-th Novikov chain group $\mathrm{CN}_i(\alpha)$ can then be defined as the \emph{downward completion} of $V_i(\alpha)$ with respect to $\tilde{f}_\alpha$, which shall be denoted by 
\begin{equation}\label{equation downward completion for ordinary chain group}
\widehat{V}_i(\alpha)_{\tilde{f}_\alpha} \text{ or more concisely } \widehat{V}_i(\alpha)_\alpha.
\end{equation}
Explicitly, elements $\xi \in \mathrm{CN}_i(\alpha)$ are infinite sums with a \emph{finiteness condition} determined by $\tilde{f}_\alpha$:
\begin{equation*}
\xi=\sum_{\tilde{x} \in \mathrm{Crit}_i(\tilde{f}_{\alpha})} \xi_{\tilde{x}} \, \tilde{x} \in \mathrm{CN}_i(\alpha) \; \iff \; \forall c \in \R \colon \;  \lbrace \tilde{x} \, \big | \, \xi_{\tilde{x}} \neq 0 \in \Z, \; \tilde{f}_{\alpha}(\tilde{x})>c \rbrace \text{ is finite}.
\end{equation*}
The boundary operator is defined by counting Morse trajectories of $\tilde{f}_{\alpha}$:
\begin{equation*}
\partial \colon \mathrm{CN}_i(\alpha) \to \mathrm{CN}_{i-1}(\alpha), \quad \partial \xi:=\sum_{\tilde{x}, \, \tilde{y}} \, \xi_{\tilde{x}} \cdot \#_{\mathrm{alg}} \, \underline{\mathcal{M}}(\tilde{x},\tilde{y};\tilde{f}_{\alpha}) \, \tilde{y},
\end{equation*}
where 
\begin{equation*}
\mathcal{M}(\tilde{x},\tilde{y};\tilde{f}_{\alpha})=\lbrace \tilde{\gamma} \in C^{\infty}(\R,\widetilde{M}) \, \big | \, \dot{\tilde{\gamma}}+\nabla^{\tilde{g}} \tilde{f}_{\alpha}(\tilde{\gamma})=0, \, \tilde{\gamma}(-\infty)=\tilde{x}, \, \tilde{\gamma}(+\infty)=\tilde{y} \rbrace
\end{equation*} is the usual moduli space and $\underline{\mathcal{M}}(\tilde{x},\tilde{y};\tilde{f}_{\alpha})=\bigslant{\mathcal{M}(\tilde{x},\tilde{y};\tilde{f}_{\alpha})}{\R}$. Similarly, we denote by $\mathcal{M}(x,y;\alpha)$ and $\underline{\mathcal{M}}(x,y;\alpha)$ the moduli spaces downstairs. The $\#_{\mathrm{alg}}$ indicates the algebraic count, i.e. counting the Novikov trajectories with signs determined by a choice of orientation of the underlying unstable manifolds.

The \emph{Novikov ring} $\Lambda_\alpha$ associated to $\alpha \in a$ is defined as the \emph{upward completion} of the group ring $\Z[\Gamma_a]$ with respect to the period homomorphism $\Phi_a$, therefore
\begin{equation*}
\lambda=\sum_{A \in \Gamma_a} \lambda_A \, A \in \Lambda_\alpha \; \iff \; \forall c \in \R \colon \;  \lbrace A \, \big | \, \lambda_A \neq 0 \in \Z, \; \Phi_a(A) < c \rbrace \text{ is finite}.
\end{equation*}
The Novikov ring $\Lambda_\alpha$ does not depend on the choice of representative $\alpha \in a$, thus we shall write $\Lambda_a$. Moreover, $\Lambda_a$ acts on $\mathrm{CN}_\bullet(\alpha)$ in the obvious way. By fixing a preferred lift $\tilde{x}_j$ in each fiber of the finitely many zeros $x_j \in Z(\alpha):=\left\lbrace x \in M \, \big | \, \alpha(x)=0 \right\rbrace$, one can view $\mathrm{CN}_\bullet(\alpha)$ as a finitely-generated $\Lambda_a$-module:
\begin{equation}\label{equation ordinary chain complex is novikov ring module}
\mathrm{CN}_i(\alpha) \cong \bigoplus_{\tilde{x}_j \in \mathrm{Crit}_i(\tilde{f}_{\alpha})} \Lambda_a \langle \tilde{x}_j \rangle \, \text{ as Novikov ring modules}. 
\end{equation}
Another standard fact asserts that the boundary operator $\partial$ is $\Lambda_a$-linear and consequently the Novikov homology $\mathrm{HN}_\bullet (a)$ carries a $\Lambda_a$-module structure. The latter is implicitly using the fact that isomorphism of Novikov homologies for cohomologous Morse forms, which is suppressed in the notation $\mathrm{HN}_\bullet (a)$, is also $\Lambda_a$-linear.

\begin{remark}\label{remark Z_2 Novikov theory}
If $M$ is \emph{not} orientable one can still define a Novikov homology by replacing $\Z$ with $\Z_2$ in all the definitions above.
\end{remark}

\subsection{Novikov homology with polytopes}\label{subsection Novikov homology with polytopes}
We are now ready to refine the Novikov chain complex using polytopes -- this notion will be key for the proof all incoming theorems.

\begin{definition}\label{def polytope}
Given $a_0,\dots,a_k \in H^1_{\mathrm{dR}}(M)$, denote by
\begin{equation*}
\mathcal{A}=\langle a_0, \dots, a_k \rangle \subset H^1_{\mathrm{dR}}(M)
\end{equation*}
the \textbf{polytope} spanned by the \textbf{vertices} $\{a_l\}_{l=0,\dots,k}$, i.e. the set of all convex combinations 
\begin{equation*}
a=\sum_{l=0}^k c_l \cdot a_l \text{ with } c_l \in [0,1] \text{ and } \sum_{l=0}^k c_l=1.
\end{equation*}
\end{definition}
To any polytope $\mathcal{A}$ we associate a regular cover 
\begin{equation*}
 \pi \colon \widetilde{M}_{\mathcal{A}} \to M \text{ determined by } \mathrm{Deck}(\widetilde{M}_\mathcal{A}) \cong \bigslant{\pi_1(M)}{\displaystyle\bigcap_{l=0}^k \ker (a_l)},
\end{equation*}
and we shall abbreviate $$\Gamma_\mathcal{A}:=\mathrm{Deck}(\widetilde{M}_{\mathcal{A}}).$$
\begin{example}\label{example polytope consisting of point}
For the polytope $\mathcal{A}=\langle a \rangle$ the covering $\widetilde{M}_\mathcal{A}$ agrees with the abelian cover $\widetilde{M}_a$ associated to $a \in H^1_{\mathrm{dR}}(M)$. The same is true for any polytope $\mathcal{A}$, whose other vertices $a_l$ satisfy $\ker (a) \subseteq \ker (a_l)$.
\end{example}

The defining condition of $\widetilde{M}_{\mathcal{A}}$ ensures that each vertex $a_l$ pulls back to the trivial cohomology class, and so does every $a \in \mathcal{A}$, see Lemma \ref{lemma vertex determien CN polytope}. We write  $\tilde{f}_{\alpha} \in C^{\infty}(\widetilde{M}_{\mathcal{A}})$ to denote some primitive of $\pi^*\alpha$ for $\alpha$ a representative of $a \in \mathcal{A}$. Now we fix a smooth section 
\begin{equation*}
\theta \colon \mathcal{A} \longrightarrow \Omega^1(M), \quad a \mapsto \theta_a
\end{equation*}
of the projection of closed one-forms to their cohomology class. In other words, $\theta_a$ is a representative of $a$. This enables us to talk about a ``preferred" representative of each cohomology class in the polytope. 

For every polytope $a \in \mathcal{A}$ we define
\begin{equation*}
V_i(\theta_a,\mathcal{A}):= \bigoplus_{\tilde{x} \in \mathrm{Crit}_i\left(\tilde{f}_{\theta_a}\right)} \Z \langle \tilde{x} \rangle.
\end{equation*}
The subtle but crucial difference to $V_i(\theta_a)$ is that $\widetilde{M}_{\mathcal{A}}$ does not necessarily coincide with the abelian cover $\widetilde{M}_a$.

\begin{definition}\label{def novikov chain group with polytope}
Let $\mathcal{A}$ be a polytope with section $\theta \colon \mathcal{A} \to \Omega^1(M)$. Then the \textbf{(polytope) Novikov chain complex groups}  
\begin{equation*}
\mathrm{CN}_i(\theta_a,\mathcal{A}), \quad i \in \N_0,
\end{equation*}
are defined as the intersections of the downward completions of $V_i(\theta_a,\mathcal{A})$ with respect to any $\tilde{f}_{\beta} \colon \widetilde{M}_{\mathcal{A}} \to \R$ for $b \in \mathcal{A}$. In other words, with the notation of \eqref{equation downward completion for ordinary chain group}:
\begin{equation*}
\mathrm{CN}_i(\theta_a,\mathcal{A}):=\bigcap_{b \in \mathcal{A}} \widehat{V}_i(\theta_a,\mathcal{A})_{\beta}.
\end{equation*}
\end{definition}

\begin{remark}\label{remark indep of primitive and cohomol for finiteness condition}
Let $\beta \in b$ be any representative. The choice of primitive $\tilde{f}_{\beta}$ of $\pi^*\beta$ is unique up to adding constants and hence does not affect the finiteness condition. Additionally, two primitives $\tilde{f}_{\beta}$ and $\tilde{f}_{\beta'}$ induce the same finiteness condition for $\beta, \, \beta' \in b$. Indeed, $\tilde{f}_{\beta'}-\tilde{f}_{\beta}=h \circ \pi$ for some smooth $h \colon M \to \R$ with $dh=\beta'-\beta$. Since $M$ is compact we get $$\tilde{f}_{\beta}(\tilde{x})>c \implies \tilde{f}_{\beta'}(\tilde{x}) > \min_{z} \vert h(z) \vert +c \text{ and } \tilde{f}_{\beta'}(\tilde{x})>d \implies \tilde{f}_{\beta}(\tilde{x}) > d-\max_{z} \vert h(z) \vert,$$ hence the two finiteness conditions are equivalent. This justifies Definition \ref{def novikov chain group with polytope}.
\end{remark}

Unpacking Definition \ref{def novikov chain group with polytope} we see
\begin{equation}\label{equation multi finiteness condition}
\xi=\sum_{\tilde{x} \in \mathrm{Crit}_i\left(\tilde{f}_{\theta_a}\right)} \xi_{\tilde{x}} \, \tilde{x} \in \mathrm{CN}_i(\theta_a, \mathcal{A})  \iff \forall b \in \mathcal{A}, \forall c \in \R \colon \;  \#\lbrace \tilde{x} \, \big | \, \xi_{\tilde{x}} \neq 0, \; \tilde{f}_\beta(\tilde{x})>c \rbrace < +\infty,
\end{equation}
where it does not matter which primitives $\tilde{f}_\beta$ we use, cf. Remark \ref{remark indep of primitive and cohomol for finiteness condition}. The right hand side describes a finiteness condition that has to hold for all $b \in \mathcal{A}$, hence we will refer to it as the \emph{multi finiteness condition}.

\begin{notation}
In view of Remark \ref{remark indep of primitive and cohomol for finiteness condition} we shall write $$\mathrm{CN}_i(\theta_a,\mathcal{A})=\bigcap_{b \in \mathcal{A}} \widehat{V}_i(\theta_a,\mathcal{A})_b$$ from now on.
\end{notation}

In a similar fashion we can define yet another completion of $V_i(\theta_a,\mathcal{A})$ by taking the completion with respect to less one-forms.

\begin{definition}\label{definition novikov chain complex restricted}
Let $\mathcal{B} \subseteq \mathcal{A}$ be a subpolytope. Then we define 
\begin{equation*}
\mathrm{CN}_i\left(\theta_a,\mathcal{A}  |_{\mathcal{B}}\right):=\bigcap_{b \in \mathcal{B}} \widehat{V}_i(\theta_a,\mathcal{A})_b
\end{equation*}
the \textbf{restricted} (polytope) Novikov chain complex groups of $\mathcal{B}\subseteq \mathcal{A}$.
\end{definition}

By definition we get the inclusion
$$\mathrm{CN}_\bullet(\theta_a,\mathcal{A}) \subseteq \mathrm{CN}_\bullet\left(\theta_a, \mathcal{A}  |_{\mathcal{B}} \right), \text{ for all subpolytopes } \mathcal{B} \subseteq \mathcal{A}.$$
The next lemma asserts that $\mathrm{CN}_\bullet(\theta_a,\mathcal{A})$ is uniquely determined by the vertices of $\mathcal{A}$. In other words, one only needs to check the multi finiteness condition for the finitely many vertices $a_l$. This is a straightforward adaptation of \cite[Lemma 7.3]{zhang2019}.

\begin{lemma}\label{lemma vertex determien CN polytope}
Let $\theta \colon \mathcal{A} \to \Omega^1(M)$ be as above. Then
\begin{equation*}
\mathrm{CN}_i(\theta_a, \mathcal{A})=\bigcap_{b \in \mathcal{A}} \widehat{V}_i(\theta_a,\mathcal{A})_b=\bigcap_{l=0}^k \widehat{V}_i(\theta_a,\mathcal{A})_{a_l}=\bigcap_{l=0}^k\mathrm{CN}_i \left(\theta_a, \mathcal{A} |_{a_l}\right), \quad \forall a \in \mathcal{A}.
\end{equation*}
More generally, for every subpolytope $\mathcal{B} \subseteq \mathcal{A}$ spanned by $b_j=a_{l_j}$:
\begin{equation*}
\mathrm{CN}_i(\theta_a, \mathcal{A} |_{\mathcal{B}})=\bigcap_{j}\mathrm{CN}_i \left(\theta_a, \mathcal{A} |_{b_j}\right), \quad \forall a \in \mathcal{A}.
\end{equation*}
\end{lemma}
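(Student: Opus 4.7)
The plan is to observe that the first and last equalities are essentially definitional (the first is Definition 2.4, and the last is Definition 2.6 applied to the one-vertex subpolytope $\langle a_l\rangle$), so the entire content of the lemma lies in the middle equality
\[
\bigcap_{b \in \mathcal{A}} \widehat{V}_i(\theta_a,\mathcal{A})_b \;=\; \bigcap_{l=0}^k \widehat{V}_i(\theta_a,\mathcal{A})_{a_l}.
\]
The inclusion ``$\subseteq$'' is immediate since $\{a_0,\dots,a_k\} \subseteq \mathcal{A}$. The real work is the reverse inclusion.

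For ``$\supseteq$'', I would fix $\xi \in \bigcap_{l=0}^k \widehat{V}_i(\theta_a,\mathcal{A})_{a_l}$ and an arbitrary $b = \sum_{l=0}^k c_l a_l \in \mathcal{A}$ with $c_l \in [0,1]$, $\sum c_l = 1$. By Remark \ref{remark indep of primitive and cohomol for finiteness condition} the finiteness condition associated to $b$ is independent of both the representative $\beta \in b$ and the primitive $\tilde f_\beta$ chosen on $\widetilde{M}_\mathcal{A}$. So I may pick the concrete representative $\beta := \sum_l c_l \theta_{a_l} \in b$ and the concrete primitive $\tilde f_\beta := \sum_l c_l \tilde f_{\theta_{a_l}}$, which is well-defined on $\widetilde{M}_\mathcal{A}$ precisely because each $\pi^*\theta_{a_l}$ is already exact there.

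The key observation is then an elementary convexity argument: since $\sum_l c_l = 1$ with $c_l \ge 0$, if $\tilde f_\beta(\tilde x) > c$ then $\sum_l c_l \tilde f_{\theta_{a_l}}(\tilde x) > c$, and therefore at least one of the values $\tilde f_{\theta_{a_l}}(\tilde x)$ must itself exceed $c$ (otherwise the convex combination would be $\le c$). This yields the set-theoretic containment
\[
\bigl\{\tilde x \,\bigl|\, \xi_{\tilde x} \neq 0,\; \tilde f_\beta(\tilde x) > c\bigr\} \;\subseteq\; \bigcup_{l=0}^k \bigl\{\tilde x \,\bigl|\, \xi_{\tilde x} \neq 0,\; \tilde f_{\theta_{a_l}}(\tilde x) > c\bigr\}.
\]
Each of the $k+1$ sets on the right is finite by hypothesis, hence so is the union and so is the left-hand side. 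This is exactly the multi finiteness condition \eqref{equation multi finiteness condition} for $b$, so $\xi \in \widehat{V}_i(\theta_a,\mathcal{A})_b$, and since $b$ was arbitrary we conclude $\xi \in \bigcap_{b \in \mathcal{A}} \widehat{V}_i(\theta_a,\mathcal{A})_b$.

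The generalization to a subpolytope $\mathcal{B} \subseteq \mathcal{A}$ spanned by the vertices $b_j = a_{l_j}$ is then obtained by running the identical convexity argument but only over convex combinations within $\mathcal{B}$, which gives $\mathrm{CN}_i(\theta_a,\mathcal{A}|_\mathcal{B}) = \bigcap_j \widehat{V}_i(\theta_a,\mathcal{A})_{b_j} = \bigcap_j \mathrm{CN}_i(\theta_a,\mathcal{A}|_{b_j})$. I do not anticipate any genuine obstacle here; the only subtlety is making sure that the freedom to rechoose the representative $\beta$ and its primitive (Remark \ref{remark indep of primitive and cohomol for finiteness condition}) is invoked, so that the convex-combination primitive $\sum_l c_l \tilde f_{\theta_{a_l}}$ can legitimately be used to test the finiteness condition for $b$.
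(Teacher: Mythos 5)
Your proof is correct. The paper does not spell out the argument for this lemma --- it simply refers to it as a straightforward adaptation of a lemma of Zhang --- so there is no in-paper proof to compare against, but the convexity argument you give is precisely the expected one. Both pieces you flag as the load-bearing steps are right: the freedom to test the finiteness condition for $b$ with the concrete representative $\beta=\sum_l c_l\,\theta_{a_l}$ and primitive $\tilde f_\beta=\sum_l c_l\,\tilde f_{\theta_{a_l}}$ is exactly what Remark~\ref{remark indep of primitive and cohomol for finiteness condition} provides (and $\sum_l c_l\,\tilde f_{\theta_{a_l}}$ is indeed a primitive of $\pi^*\beta$ on $\widetilde M_\mathcal A$ since $d$ is linear); and the elementary observation that a convex combination exceeding $c$ forces at least one term to exceed $c$ gives the containment of level sets that reduces the multi-finiteness condition for arbitrary $b\in\mathcal A$ to the finitely many vertex conditions. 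The trivial inclusion ``$\subseteq$'', the definitional identifications at both ends, and the verbatim extension to a subpolytope $\mathcal B$ are all handled correctly.
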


One can play a similar game with the Novikov rings:

\begin{definition}
 Define the \textbf{(polytope) Novikov ring}  
\begin{equation*}
 \Lambda_\mathcal{A}=\bigcap_{b \in \mathcal{A}} \widehat{\Z}[\Gamma_{\mathcal{A}}]^b,
\end{equation*}
where $\widehat{\Z}[\Gamma_{\mathcal{A}}]^b$ denotes the upward completion of the group ring $\Z[\Gamma_{\mathcal{A}}]$ with respect to the period homomorphism $\Phi_b \colon \Gamma_{\mathcal{A}} \to \R$. 
 Analogously, for every subpolytope $\mathcal{B} \subseteq \mathcal{A}$ we define the \textbf{restricted} polytope Novikov ring 
\begin{equation*} 
 \Lambda_{\mathcal{A} |_{\mathcal{B}}}=\bigcap_{b \in \mathcal{B}} \widehat{\Z}[\Gamma_{\mathcal{A}}]^b.
\end{equation*}
\end{definition} 
 
 As before we get
\begin{equation*}
\Lambda_{\mathcal{A}} \subseteq \Lambda_{\mathcal{A} |_{\mathcal{B}}}, \text{ for all subpolytopes } \mathcal{B} \subseteq \mathcal{A}.
\end{equation*}
The obvious analogue to Lemma \ref{lemma vertex determien CN polytope} holds for Novikov rings as well. These rings enable us to view the polytope Novikov chain complexes as finite Novikov-modules just as in the ordinary setting \eqref{equation ordinary chain complex is novikov ring module}.

Next we try to equip the groups $\mathrm{CN}_\bullet(\theta_a,\mathcal{A})$ with a boundary operators that turns them into a genuine chain complex. The obvious candidate would be 
\begin{equation}\label{equation boundary operator}
\partial_{\theta_a} \colon \mathrm{CN}_\bullet(\theta_a,\mathcal{A}) \to \mathrm{CN}_{\bullet-1}(\theta_a,\mathcal{A}), \quad \partial_{\theta_a} \xi := \sum_{\tilde{x}, \tilde{y}} \xi_{\tilde{x}} \cdot \#_{\mathrm{alg}} \, \underline{\mathcal{M}}\left(\tilde{x},\tilde{y};\tilde{f}_{\theta_a}\right) \, \tilde{y}.
\end{equation}
Note that the moduli space above actually also depends on a choice of metric $g$, and so does the boundary operator $\partial_{\theta_a}$. When we want to keep track of the metric we will write $\mathrm{CN}_\bullet(\theta_a,g,\mathcal{A})$. For restrictions $\mathcal{A}  |_{\mathcal{B}}$ we define the boundary operator analogously.

Formally, the definition of $\partial_{\theta_a}$ looks identical to the definition of $\partial$ on $\mathrm{CN}_\bullet(\alpha)$, and morally it is. However, there are two major differences. Firstly, the cover $\widetilde{M}_\mathcal{A}$ might differ from the abelian cover $\widetilde{M}_a$ of $a$. Secondly, it is not clear whether $\partial=\partial_{\theta_a}$ preserves the multi finiteness condition, i.e. whether $\partial \xi$ lies in $\mathrm{CN}_\bullet(\theta_a,\mathcal{A})$. Luckily, we will achieve this by replacing the original section $\theta$ with a perturbed section $\vartheta \colon \mathcal{A} \to \Omega^1(M)$ (cf. Theorem \ref{theorem perturbed section}). Whenever the chain complex is defined we make the following definition.

\begin{definition}
Let $\vartheta \colon \mathcal{A} \to \Omega^1(M)$ be a section such that $\left(\mathrm{CN}_\bullet(\vartheta_a,g_{\vartheta_a},\mathcal{A}),\partial \right)$ defines a chain complex. Then we call the induced homology \textbf{(polytope) Novikov homology} and denote it by
\begin{equation*}
\mathrm{HN}_\bullet(\vartheta_a,g_{\vartheta_a},\mathcal{A}) \text{ or more abusively  }\mathrm{HN}_\bullet(\vartheta_a,\mathcal{A}).
\end{equation*}
Analogously, we define
\begin{equation*}
\mathrm{HN}_\bullet\left(\vartheta_a,g_{\vartheta_a},\mathcal{A}  |_{\mathcal{B}} \right)=\mathrm{HN}_\bullet\left(\vartheta_a,\mathcal{A}  |_{\mathcal{B}}\right).
\end{equation*}
\end{definition}

\begin{remark}\label{remark Novikov-modules }
Analogously to ordinary Novikov homology, one can show that the Novikov homologies $\mathrm{HN}_\bullet(\vartheta_a,\mathcal{A})$ and $\mathrm{HN}_\bullet \left(\vartheta_a,\mathcal{A}  |_{\mathcal{B}} \right)$ are both finitely-generated modules over the Novikov rings $\Lambda_\mathcal{A}$ and $\Lambda_{\mathcal{A} |_{\mathcal{B}}}$, respectively, thus generalizing the Novikov-module property. This follows from the fact that the boundary operator \eqref{equation boundary operator} is $\Lambda_{\mathcal{A}}$-linear (and similarly for the restricted case).
\end{remark}

\subsection{Technical results for Subsection \ref{subsection Section Perturbations}  }

In this subsection we state and prove all the technical auxiliary results needed for the proof of Theorem \ref{theorem perturbed section}, which roughly speaking asserts the well-definedness of the polytope chain complexes and their respective homologies after modifying the section $\theta \colon \mathcal{A} \to \Omega^1(M)$ to a new section $\vartheta \colon \mathcal{A} \to \Omega^1(M)$.

\begin{notation}
For any (closed) one-form $\rho$ we will denote by $\nabla^g \rho$ the dual vector field to $\rho$ with respect to the metric $g$. Note that with this notation we have $\nabla^g H=\nabla^g dH$ for any smooth function $H \colon M \to \R$.
\end{notation}

\begin{proposition}\label{proposition ps}
Let $(\rho,g)$ be a Morse-Smale pair. Then for every $\delta>0$ there exists a constant $C_\rho=C_\rho(\delta,g)>0$ such that
\begin{equation*}
\Vert \nabla^g \rho (z) \Vert < C_\rho \implies \exists x \in Z(\rho) \text{ with } d(x,z)<\delta,
\end{equation*}
where both $\Vert \cdot \Vert$ and $d( \, \cdot \, , \, \cdot \, )$ are induced by $g$.
\end{proposition}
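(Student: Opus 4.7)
The plan is to prove this by a simple compactness/contradiction argument; the Morse-Smale hypothesis will not really be used beyond the fact that $Z(\rho)$ is a closed (in fact finite) subset of $M$. First I would argue by contradiction: suppose the conclusion fails for some $\delta>0$, so that for every $n\in\N$ there exists $z_n\in M$ with
\begin{equation*}
\Vert\nabla^g\rho(z_n)\Vert<\tfrac{1}{n}\quad\text{and}\quad d(z_n,Z(\rho))\geq\delta.
\end{equation*}
By compactness of $M$, a subsequence of $(z_n)$ converges to some $z_\infty\in M$. Continuity of $\nabla^g\rho$ forces $\nabla^g\rho(z_\infty)=0$, hence $\rho(z_\infty)=0$ and $z_\infty\in Z(\rho)$, whereas continuity of $d(\,\cdot\,,Z(\rho))$ gives $d(z_\infty,Z(\rho))\geq\delta>0$, a contradiction.

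A slightly more constructive reformulation, which I prefer because it makes the dependence $C_\rho=C_\rho(\delta,g)$ manifest, proceeds as follows. Consider the compact set
\begin{equation*}
K_\delta:=\bigl\{z\in M\;\big|\;d(z,Z(\rho))\geq\delta\bigr\}\subset M,
\end{equation*}
which is disjoint from $Z(\rho)$ by construction. The continuous function $z\mapsto\Vert\nabla^g\rho(z)\Vert$ does not vanish on $K_\delta$, so it attains a strictly positive minimum on $K_\delta$; set $C_\rho$ to be any positive number strictly smaller than this minimum. By contrapositive, $\Vert\nabla^g\rho(z)\Vert<C_\rho$ implies $z\notin K_\delta$, i.e.\ $d(z,Z(\rho))<\delta$, which gives the required $x\in Z(\rho)$.

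I do not anticipate any genuine obstacle: the closedness of $Z(\rho)$ (which is automatic from continuity of $\rho$) and the compactness of $M$ are all that is needed. The Morse condition enters only as extra information, guaranteeing that $Z(\rho)$ is finite; this is irrelevant for the present statement but presumably convenient in the applications of Proposition \ref{proposition ps} within the proof of Theorem \ref{theorem perturbed section}.
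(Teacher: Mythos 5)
Your proof is correct, and your first argument is essentially identical to the paper's: argue by contradiction, extract a convergent subsequence by compactness of $M$, and observe that the limit would simultaneously lie in $Z(\rho)$ (by continuity of $\nabla^g\rho$) and at distance at least $\delta$ from $Z(\rho)$. Your second, constructive reformulation via the compact set $K_\delta$ is a clean and slightly preferable variant since it exhibits $C_\rho$ explicitly as (anything below) $\min_{K_\delta}\Vert\nabla^g\rho\Vert$, but the underlying ingredients (compactness and continuity) are the same; you are also right that the Morse-Smale hypothesis is not really used here beyond $Z(\rho)$ being closed.
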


\begin{proof}
Suppose the assertion does not hold. Then there exists a $\delta>0$, a positive sequence $C_k \to 0$ and $(z_k) \subset M$ such that 
$$\Vert \nabla^g \rho(z_k) \Vert < C_k \text{ and } z_k \in M \setminus \bigcup_{x \in Z(\rho)}B_\delta(x).$$
By compactness of $M$ we can pass to a subsequence $(z_k)$ converging to some $z \in M$. The above however implies $\Vert \nabla^g \rho(z) \Vert=0$, which is equivalent to $z \in Z(\rho)$. At the same time $z$ lies in $M \setminus \bigcup_{x \in Z(\rho)}B_\delta(x)$, which is a contradiction. This concludes the proof.

\end{proof}
 
\begin{notation}
Such a constant $C_\rho>0$ is often referred to as a \emph{Palais-Smale constant} (short: \emph{PS-constant}). The main case of interest is the exact one, i.e. $\rho=dH$, for which we will abbreviate $C_{dH}=C_H$. Sometimes we will also abbreviate $C_{\rho}=C$.
\end{notation}

The next Lemma builds the main technical tool of Subsection \ref{subsection Section Perturbations}. The idea is to perturb one-forms $\alpha$ close to a given \emph{reference Morse-Smale pair} $(\rho,g)$ so that the pertubations, say $\alpha'$, maintain their cohomology classes of $\alpha$, become Morse, have the same zeros as $\rho$, and are still relatively close to $\rho$. This is reminiscent of Zhang's arguments \cite[Section 3]{zhang2019}.

\begin{lemma}\label{lemma perturbations}
Let $(\rho,g)$ be a Morse-Smale pair, $\delta>0$ so small that the balls $B_{2\delta}(x)$, with $x \in Z(\rho)$, are geodisically convex\footnote{This is a well known result in Riemannian geometry, see \cite{Whitehead1962}} and lie in pairwise disjoint charts of $M$, and $C=C_\rho(\delta,g)>0$ as in Proposition \ref{proposition ps}.

Let $\alpha \in \Omega^1(M)$ with 
\begin{equation*}
\Vert  \alpha - \rho \Vert < \frac{C}{8} \text{ and } a=[\alpha],
\end{equation*}where  $\Vert \cdot \Vert$ is the norm induced by $g$. Then there exists a Morse-Smale pair $(\alpha',g')$, with $\alpha' \in a$, satisfying 
\begin{itemize}
\item $\Vert \alpha' - \rho \Vert \leq 5 \cdot \Vert \alpha -\rho \Vert$, 
\item $\restr{\alpha'}{B_\delta(x)}=\restr{\rho}{B_\delta(x)}$ for all $x \in Z(\rho)$ and
\item $Z(\alpha')=Z(\rho)$.
\end{itemize}
Moreover $\Vert \nabla^{g'} \alpha'(z) \Vert' < \frac{C}{8}$ implies $z \in B_\delta(x)$\footnote{This is still the ball of radius $\delta$ with respect to the distance metric induced by $g$.} for some zero $x \in Z(\alpha')$, where $\Vert \cdot \Vert'$ is the norm induced by $g'$.

\end{lemma}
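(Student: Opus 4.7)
The plan is to modify $\alpha$ by an exact differential $dH$ so that $\alpha' := \alpha + dH$ coincides with $\rho$ on each $B_\delta(x)$, $x \in Z(\rho)$, and remains $C^0$-close to $\rho$ globally. Since each $B_{2\delta}(x)$ is geodesically convex, hence contractible, and $\alpha - \rho$ is closed, there is a unique smooth primitive $H_x \in C^\infty(B_{2\delta}(x))$ with $dH_x = \rho - \alpha$ and $H_x(x) = 0$. Choose cutoffs $\chi_x : M \to [0,1]$ supported in $B_{2\delta}(x)$, equal to $1$ on $B_\delta(x)$, with $\|d\chi_x\|_g \leq 2/\delta$. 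Because the balls $B_{2\delta}(x)$ are pairwise disjoint, $H := \sum_{x \in Z(\rho)} \chi_x H_x$ is a smooth function on all of $M$, and I set $\alpha' := \alpha + dH$. Then $[\alpha'] = [\alpha] = a$ automatically, and on each $B_\delta(x)$ we have $\alpha' = \alpha + dH_x = \rho$, establishing the second bullet.

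For the size bound the only nontrivial region is the annulus $B_{2\delta}(x) \setminus B_\delta(x)$, where the Leibniz rule gives
\begin{equation*}
\alpha' - \rho \;=\; (1-\chi_x)(\alpha - \rho) + H_x \, d\chi_x.
\end{equation*}
Integrating $dH_x = \rho - \alpha$ along the radial geodesic from $x$ shows $|H_x(z)| \leq d(x,z)\|\rho - \alpha\|_g \leq 2\delta\,\|\rho - \alpha\|_g$ for $z \in B_{2\delta}(x)$, and combined with the cutoff estimate this yields $\|\alpha' - \rho\|_g \leq (1 + 4)\,\|\alpha - \rho\|_g = 5\|\alpha - \rho\|_g$, proving the first bullet. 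For $Z(\alpha') = Z(\rho)$: inside $\bigcup_x B_\delta(x)$ one has $\alpha' = \rho$, while outside, Proposition \ref{proposition ps} applied to $(\rho,g)$ together with the previous estimate give
\begin{equation*}
\|\alpha'\|_g \;\geq\; \|\rho\|_g - \|\alpha' - \rho\|_g \;\geq\; C - 5 \cdot \tfrac{C}{8} \;=\; \tfrac{3C}{8} \;>\; 0,
\end{equation*}
so $\alpha'$ has no extraneous zeros. In particular $\alpha'$ is a Morse form, because near each of its zeros it equals $\rho$.

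For the metric, I would take $g' = g$ on an open neighborhood of $\bigcup_x \overline{B_\delta(x)}$ and let $g'$ elsewhere be a small, generic $C^0$-perturbation of $g$ making $(\alpha',g')$ Morse--Smale. This is possible by the standard Sard--Smale density argument, since the necessary transversality of stable/unstable manifolds can be achieved by perturbations of the metric supported in the complement of $Z(\alpha')$. I further require this perturbation to be small enough that the dual metrics satisfy $(g')^* \geq \tfrac{1}{9} g^*$ pointwise on $T^*M$, which is achievable by compactness of $M$ and continuity of the pointwise Gram matrix in $g$. Then for $z \notin \bigcup_x B_\delta(x)$ the lower bound just established upgrades to
\begin{equation*}
\|\nabla^{g'}\alpha'(z)\|_{g'} \;=\; \|\alpha'(z)\|_{(g')^*} \;\geq\; \tfrac{1}{3}\,\|\alpha'(z)\|_{g^*} \;\geq\; \tfrac{C}{8},
\end{equation*}
which is the contrapositive of the final assertion.

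The main obstacle is the simultaneous arrangement in the last step: producing a $g'$ that is Morse--Smale with $\alpha'$, equal to $g$ near $Z(\alpha')$, \emph{and} close enough to $g$ globally to preserve the Palais--Smale-type estimate. All three conditions are compatible because the required transversality can be realized by a metric perturbation localized away from $Z(\alpha')$ and made arbitrarily small in $C^0$; the remaining verifications are elementary quantitative bookkeeping on a compact manifold.
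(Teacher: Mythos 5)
Your construction of $\alpha'$ is identical to the paper's: both cut off a local primitive of $\alpha-\rho$ near each zero of $\rho$ (the paper subtracts $\sum_i d(h_i f_i)$ with $df_i=\alpha-\rho$; you add $\sum_x d(\chi_x H_x)$ with $dH_x=\rho-\alpha$, which is the same one-form), and the factor $5$ comes from the same mean value inequality on the geodesically convex balls. Your argument that $Z(\alpha')=Z(\rho)$ is an equivalent contrapositive rearrangement of the paper's Palais--Smale step. The only substantive variation is in the metric step: you freeze $g$ on a neighborhood of $\bigcup_x \overline{B_\delta(x)}$ and perturb $g$ in the complement, whereas the paper does the opposite, perturbing $g$ \emph{inside} $\bigcup_x B_\delta(x)$ and leaving it fixed outside; accordingly you deduce the last assertion from a multiplicative dual-metric bound $(g')^*\geq\tfrac{1}{9}g^*$ together with the lower bound $\Vert\alpha'(z)\Vert_{g^*}\geq\tfrac{3C}{8}$ off the balls, while the paper uses an additive continuity estimate $\vert \Vert\nabla^g\alpha'\Vert-\Vert\nabla^{g'}\alpha'\Vert'\vert<\varepsilon$. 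Both supports for the perturbation intersect every Novikov trajectory of $(\alpha',g)$ (trajectories run between distinct zeros and must cross both regions), so either yields the Morse--Smale property by the Sard--Smale genericity argument, and both estimate styles close the final assertion correctly. The one downstream consequence of your choice is that Proposition \ref{proposition perturbation for the whole section of polytope} records the paper's convention $\restr{g_{\vartheta_a}}{M\setminus N}=\restr{g}{M\setminus N}$, which would require the cosmetic flip under your variant, but nothing in the later energy estimates actually depends on which side of the boundary spheres the metric perturbation lives.
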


\begin{proof}
Since $\rho$ is a Morse form, there are only finitely many zeros $x \in Z(\rho)$. Around each such $x$ we will perturb $\alpha$ without changing its cohomology class: Enumerate the finitely many zeros of $\rho$ by $\{x_i\}_{i=1,\dots,k}$ and pick a bump functions $h_i \colon M \to \R$ with
$$\begin{cases}
h_i \equiv 0, &\text{ on } M\setminus B_{2\delta}(x_i) \\
h_i \equiv 1, &\text{ on } B_\delta(x_i) \\
\Vert \nabla^g h_i \Vert \leq \frac{2}{\delta}.
\end{cases}
$$
Since every $B_{2\delta}(x_i)$ is simply connected, there exist unique smooth functions $f_i \colon B_{2\delta}(x_i) \to \R$ satisfying  
\begin{equation}\label{equation unique primities in technical lemma}
f_i(x_i)=0 \text{ and } df_i=\restr{(\alpha-\rho)}{B_{2\delta}(x_i)}.
\end{equation} We set
\begin{equation}\label{equation perturbation in technical lemma}
\alpha'=\alpha- \sum_{i=1}^k d(h_i \cdot f_i).
\end{equation}
By construction we have $\alpha' \in a$, $\alpha'=\rho$ on $B_\delta(x)$ for $x \in Z(\rho)$, and that $\alpha'$ agrees with $\rho$ outisde of $\bigcup_{i=0}^k B_{2\delta}(x_i)$. Consequently $\alpha'-\rho$ and $\alpha-\rho$ agree outside of $\bigcup_{i=0}^k B_{2\delta}(x_i)$. This means that for the inequality in the first bullet point it suffices to argue why the bound holds inside each ball $B_{2 \delta}(x_i)$. Inserting the definitions grants
\begin{align*}
\Vert \alpha' -\rho \Vert_{B_{2\delta}(x_i)} &= \Vert \alpha- \rho - h_i \cdot df_i - f_i \cdot dh_i \Vert_{B_{2\delta}(x_i)} \\
&\leq (1-h_i) \Vert \alpha-\rho \Vert_{B_{2\delta}(x_i)}+\Vert f_i \Vert_{B_{2\delta}(x_i)} \cdot \Vert \nabla^g h_i \Vert_{B_{2\delta}(x_i)} \\
&\leq  \Vert \alpha-\rho \Vert_{B_{2\delta}(x_i)} + \frac{2}{\delta} \cdot \Vert f_i \Vert_{B_{2\delta}(x_i)}
\end{align*}
Recall that $f_i$ was chosen such that $f_i(x_i)=0$. Due to the geodesic convexity of the balls $B_{2\delta}(x_i)$ we can apply the mean value inequality
$$\vert f_i(y) \vert=\vert f_i(x_i)-f_i(y) \vert \leq \Vert \nabla^g f_i \Vert_{B_{2\delta}(x_i)} \cdot d(x,y) \leq \Vert \alpha -\rho \Vert_{B_{2\delta}(x_i)} \cdot 2\delta, \quad \forall y \in B_{2\delta}(x_i).$$
All in all this implies
$$\Vert \alpha'-\rho \Vert_{B_{2\delta}(x_i)} \leq \Vert \alpha- \rho \Vert_{B_{2\delta}(x_i)}+ \frac{4\delta}{\delta} \Vert \alpha-\rho \Vert_{B_{2\delta}(x_i)} =5 \cdot \Vert \alpha-\rho \Vert_{B_{2\delta}(x_i)}.$$
This proves the first inequality in the first bullet point. 
From this we will deduce that $Z(\rho)=Z(\alpha')$: the inclusion $Z(\rho)\subseteq Z(\alpha')$ is clear as $\rho$ agrees with $\alpha'$ around $Z(\rho)$. The reverse inclusion is obtained by observing that for $y \in Z(\alpha')$ we have
$$\Vert \nabla^g \rho(y) \Vert = \Vert \nabla^g \rho(y)-\underbrace{\nabla^g\alpha'(y)}_{=0} \Vert \leq \Vert \rho - \alpha' \Vert \leq 5 \cdot \Vert \alpha - \rho \Vert < C,$$ by assumption on $\rho$ and the inequality above. Proposition \ref{proposition ps} then implies that $z$ has to be a zero of $\rho$ as well. This proves $Z(\rho)=Z(\alpha')$, in particular that $\alpha'$ is a Morse form.

To get a Riemannian metric $g'$ that turns $(\alpha',g')$ into a Morse-Smale pair it suffices to perturb $g$ on an open set that intersects all the Novikov trajectories of $(\alpha',g)$, see \cite[Page 38-40]{Pozniak1999} for more details. Since $Z(\rho)=Z(\alpha')$, we can take a perturbation $g'$ that agrees with $g$ on $M \setminus \bigcup_{i=1}^k B_\delta(x_i)$ and is close to $g$ in the $C^{\infty}$-topology.

The last assertion of the statement follows from the observation that, for $\alpha'$ fixed, the map $g' \mapsto \Vert \nabla^{g'} \alpha' \Vert'$ is continuous, thus for $g'$ close to $g$ we get
$$\Vert \nabla^g \alpha'(z) \Vert \leq  \underbrace{ \vert \Vert \nabla^g \alpha'(z) \Vert- \Vert \nabla^{g'} \alpha'(z) \Vert' \vert}_{ < \varepsilon} + \Vert \nabla^{g'} \alpha'(z) \Vert'.$$ Assuming $\Vert \nabla^{g'}\alpha'(z) \Vert' < \frac{C}{8}$ we thus end up with
\begin{align*}
\Vert \nabla^g \rho (z) \Vert &\leq \Vert \nabla^g \rho(z) - \nabla^g \alpha'(z) \Vert + \Vert \nabla^g \alpha'(z) \Vert  \\
&\leq \Vert \rho-\alpha' \Vert + \varepsilon + \frac{C}{8} && \text{ using the above inequality}, \\
&\leq 4 \cdot \Vert \rho - \alpha \Vert + \varepsilon + \frac{C}{8} && \text{ by the first bullet point}, \\
&< \frac{4 \cdot C}{8}+ \varepsilon +\frac{C}{8} && \text{ by assumption.}
\end{align*}
Taking $\varepsilon \leq \frac{C}{4}$ and invoking Proposition \ref{proposition ps} then concludes the proof.

\end{proof}

Lemma \ref{lemma perturbations} can be applied to a whole section $\theta \colon \mathcal{A} \to \Omega^1(M)$ \emph{nearby} a reference Morse-Smale pair $(\rho,g)$ and give rise to a perturbed section $\vartheta \colon \mathcal{A} \to \Omega^1(M)$ that is still relatively close to $\rho$, so that each $\vartheta_a$ agrees with $\rho$ near the zeros $x \in Z(\rho)$.

\begin{proposition}\label{proposition perturbation for the whole section of polytope}
Let $(\rho,g)$ and $C=C_{\rho}>0$ as in Lemma \ref{lemma perturbations}, denote $N=\bigcup_{i} B_\delta(x_i)$ with $x_i \in Z(\rho)$, and let $\theta \colon \mathcal{A} \to \Omega^1(M)$ be a section satisfying
\begin{equation}\label{equation section sigma is close to dH}
\Vert \theta_a-\rho \Vert < \frac{C}{8}.
\end{equation}
Then there exists a section 
\begin{equation*}
\vartheta=\vartheta(\theta,\rho,g) \colon \mathcal{A} \longrightarrow \Omega^1(M)
\end{equation*} 
and a positive constant $D=D(N,g)>0$ with the following significance:\footnote{The choice of $D>0$ is independent of the assumption \eqref{equation section sigma is close to dH}.}
\begin{itemize}
\item $Z(\vartheta_a)=Z(\rho)$ for all $a \in \mathcal{A}$,
\item $\restr{\vartheta_a}{B_\delta(x_i)}=\restr{\rho}{B_\delta(x_i)}$ for all $x_i \in Z(\rho), \, a \in \mathcal{A}$.
\end{itemize}
Moreover, for every $\vartheta_a$ there exists a Riemannian metric $g_{\vartheta_a}$ close to $g$ with $\restr{g_{\vartheta_a}}{M\setminus N}=\restr{g}{M\setminus N}$ such that 
\begin{itemize}
\item $(\vartheta_a,g_{\vartheta_a})$ is Morse-Smale,
\item $\Vert \vartheta_b-\rho \Vert_{\vartheta_a} \leq D \cdot \Vert \vartheta_b- \rho \Vert \leq 5 \cdot D \cdot \Vert \theta_b-\rho \Vert$ for all $a,b \in \mathcal{A},$
\end{itemize}
where $\Vert \cdot \Vert_{\vartheta_a}$ is the operator norm induced by $g_{\vartheta_a}$.

\end{proposition}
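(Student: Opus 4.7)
The plan is to apply Lemma \ref{lemma perturbations} pointwise to each $\theta_a$ using the explicit formula from its proof, and then to handle the Morse-Smale metrics $g_{\vartheta_a}$ separately by constraining them to a common $C^0$-neighborhood of $g$ in order to extract a uniform constant $D$. Concretely, I would fix once and for all the bump functions $h_i$ from the proof of Lemma \ref{lemma perturbations} (one for each $x_i \in Z(\rho)$) and define, for every $a \in \mathcal{A}$,
\begin{equation*}
\vartheta_a := \theta_a - \sum_{i=1}^{k} d\bigl( h_i \cdot f_i^a \bigr),
\end{equation*}
where $f_i^a \colon B_{2\delta}(x_i) \to \R$ is the unique primitive of $\restr{(\theta_a - \rho)}{B_{2\delta}(x_i)}$ with $f_i^a(x_i)=0$. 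Smoothness of $a \mapsto f_i^a$ follows from smoothness of $\theta$ and the uniqueness in \eqref{equation unique primities in technical lemma}, so $\vartheta \colon \mathcal{A} \to \Omega^1(M)$ is itself a smooth section. The two bullet points on $Z(\vartheta_a)$ and on $\restr{\vartheta_a}{B_\delta(x_i)}$ are then immediate consequences of Lemma \ref{lemma perturbations} applied to each $\theta_a$.

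Next, for each individual $a \in \mathcal{A}$, the Morse-Smale argument referenced inside the proof of Lemma \ref{lemma perturbations} provides a metric $g_{\vartheta_a}$ which agrees with $g$ on $M \setminus N$, makes $(\vartheta_a, g_{\vartheta_a})$ Morse-Smale, and can be chosen arbitrarily $C^\infty$-close to $g$. The crux is to perform these choices \emph{uniformly}. Before making any perturbation, I would fix a single $\varepsilon_0 > 0$, depending only on $g$ and the compact set $\overline{N}$, so small that any Riemannian metric on $M$ which coincides with $g$ outside $N$ and lies within $\varepsilon_0$ of $g$ in $C^0(\overline{N})$ induces an operator norm on $T^*M$ satisfying $\Vert \cdot \Vert' \leq D \Vert \cdot \Vert$ for some $D = D(N,g) > 0$ depending only on $N$ and $g$. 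Since Morse-Smale genericity can be realized by arbitrarily small metric perturbations, each $g_{\vartheta_a}$ can be selected inside this pre-fixed $\varepsilon_0$-neighborhood, independently of $a$ and of the smallness constraint \eqref{equation section sigma is close to dH}.

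With this setup the norm comparison is a direct computation: outside $N$ the two norms $\Vert \cdot \Vert$ and $\Vert \cdot \Vert_{\vartheta_a}$ coincide, and on $\overline{N}$ they are related by the uniform bound $D$. Applied to $\omega = \vartheta_b - \rho$ this gives the first inequality of the second bullet, and concatenating with the bound $\Vert \vartheta_b - \rho \Vert \leq 5 \Vert \theta_b - \rho \Vert$ from Lemma \ref{lemma perturbations} yields the full chain $\Vert \vartheta_b - \rho \Vert_{\vartheta_a} \leq D \Vert \vartheta_b - \rho \Vert \leq 5 D \Vert \theta_b - \rho \Vert$.

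The main obstacle I anticipate is not any single analytic estimate but the packaging: one must be careful that the constant $D$ is genuinely independent both of the vertex/interior point $a \in \mathcal{A}$ and of the assumption \eqref{equation section sigma is close to dH}. This forces the order of quantifiers above — first fix $\varepsilon_0$, hence $D$, purely from $g$ and $N$; only afterwards invoke Morse-Smale genericity to pick the $g_{\vartheta_a}$ inside the $\varepsilon_0$-ball. Everything else (smoothness of $\vartheta$, the two geometric bullets, and the norm chain) reduces to the corresponding statements in Lemma \ref{lemma perturbations} applied cohomology class by cohomology class.
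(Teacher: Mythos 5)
Your proposal is correct and follows essentially the same route as the paper: apply Lemma \ref{lemma perturbations} fibrewise to get $\vartheta_a$, observe that the primitives $f_i^a$ (and hence $\vartheta$) depend smoothly on $a$, obtain the two geometric bullets directly from that lemma, and then fix the metric-comparison constant $D$ \emph{before} invoking Morse--Smale genericity so that the perturbed metrics $g_{\vartheta_a}$ are all uniformly equivalent to $g$. The only cosmetic difference is that you phrase the uniformity as a one-sided $C^0$ operator-norm bound on a pre-fixed $\varepsilon_0$-neighbourhood, whereas the paper quotes a two-sided equivalence $D^{-2} g_{\vartheta_a} \leq g \leq D^2 g_{\vartheta_a}$ from \cite{Pozniak1999}; both give exactly the inequality $\Vert \cdot \Vert_{\vartheta_a} \leq D \Vert \cdot \Vert$ needed, and the concatenation with the factor-$5$ estimate from Lemma \ref{lemma perturbations} is identical.
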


\begin{proof}
Since the whole section $\theta \colon \mathcal{A} \to \Omega^1(M)$ is $\frac{C}{8}$-close to $(\rho,g)$, we can take $(\rho,g)$ as a reference pair and apply Lemma \ref{lemma perturbations} to every $\theta_a$ and denote $\vartheta_a$ the corresponding perturbation. Recall from the proof of Lemma \ref{lemma perturbations} that $\vartheta_a$ is obtained by an exact perturbation of $\theta_a$ around the zeros of $\rho$ -- a closer inspection reveals that this exact perturbation varies smoothly along $\theta_a$, in particular that $\vartheta$ defines a smooth section. The first two bullet points follow immediately from Lemma \ref{lemma perturbations}.

We choose $g_{\vartheta_a}=g_{(\theta_a)'}$ just as $g'$ in Lemma \ref{lemma perturbations}, i.e. by means of a small perturbation of $g$ inside $N$. The argument in \cite{Pozniak1999} shows that sufficiently small perturbations give rise to Riemannian metrics that are \emph{uniformly} equivalent to the original $g$, in other words we may choose $g_{\vartheta_a}$ such that $(\vartheta_a,g_{\vartheta_a})$ is Morse-Smale and
$$\frac{1}{D^2} g_{\vartheta_a}(v,v) \leq g(v,v) \leq D^2g_{\vartheta_a}(v,v), \quad \forall v \in TM, \, \forall a \in \mathcal{A},$$ with $D>0$ a constant that only depends on $N$ and $g$. Using this inequality and invoking the first bullet point of Lemma \ref{lemma perturbations} concludes the proof.

\end{proof}
\subsection{Section perturbations}\label{subsection Section Perturbations}

We can finally state and prove Theorem \ref{theorem perturbed section} by applying the previous results in the special case of \emph{exact} reference pairs:

\begin{theorem}\label{theorem perturbed section}
Let $\theta \colon \mathcal{A} \to \Omega^1(M)$ be a section and a reference Morse-Smale pair $(H,g)$ on $M$. Then there exists a perturbed section 
\begin{equation*}
\vartheta=\vartheta(\theta,H,g) \colon \mathcal{A} \longrightarrow \Omega^1(M)
\end{equation*} and a choice of Riemannian metrics $g_{\vartheta_a}$
with the following significance:
\begin{itemize}
\item \emph{\textbf{(Morse-Smale property)}} Each pair $(\vartheta_a,g_{\vartheta_a})$ is Morse-Smale, for all $a \in \mathcal{A}$,
\item \emph{\textbf{(Chain-complex)}} The chain complex $\left(\mathrm{CN}_\bullet\left(\vartheta_a,g_{\vartheta_a},\mathcal{A} \right),\partial_{\vartheta_a} \right)$ is well defined for every pair $(\vartheta_a,g_{\vartheta_a})$ as above,
\item \emph{\textbf{(Ray-invariance)}} The chain complexes are equal upon scaling, i.e. $\mathrm{CN}_\bullet\left(\vartheta_a,g_{\vartheta_a},\mathcal{A} \right)=\mathrm{CN}_\bullet\left(r \cdot \vartheta_a, g_{\vartheta_a}, r \cdot \mathcal{A} \right)$ for all $r>0$, $a \in \mathcal{A}$.\footnote{Note that here the metric is \emph{not} scaled.}
\end{itemize}
\end{theorem}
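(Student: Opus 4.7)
The plan is to apply Proposition \ref{proposition perturbation for the whole section of polytope} with the exact reference pair $(\rho, g) = (dH, g)$, after a preliminary reduction to arrange its smallness hypothesis. Since $\theta$ and $(H, g)$ are given independently, the condition $\Vert \theta_a - dH \Vert < C/8$ need not hold uniformly on $\mathcal{A}$ a priori; by compactness of $\mathcal{A}$ this can be arranged by first passing to a cohomologous section (replacing each $\theta_a$ by $\theta_a + d\phi_a$ for suitable functions $\phi_a$) and, if necessary, rescaling the reference function $H$ to enlarge the Palais-Smale constant $C$. Once in this regime, Proposition \ref{proposition perturbation for the whole section of polytope} produces the perturbed section $\vartheta$ together with the Riemannian metrics $g_{\vartheta_a}$, and the first bullet (Morse-Smale) is part of its conclusion.

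The Ray-invariance bullet is essentially tautological. Scaling by $r > 0$ gives $\tilde{f}_{r \vartheta_a} = r \tilde{f}_{\vartheta_a}$, so the critical sets coincide and the Novikov moduli spaces for the unchanged metric $g_{\vartheta_a}$ agree up to time reparametrization. The multi finiteness condition for the scaled section over $r \mathcal{A}$ reads $\tilde{f}_{r \vartheta_b}(\tilde{x}) = r \tilde{f}_{\vartheta_b}(\tilde{x}) > c$, which is equivalent to $\tilde{f}_{\vartheta_b}(\tilde{x}) > c/r$; hence the chain groups and boundary operators are literally equal.

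The principal obstacle is the chain-complex bullet, namely that the candidate boundary \eqref{equation boundary operator} preserves the multi finiteness condition and hence sends $\mathrm{CN}_\bullet(\vartheta_a, \mathcal{A})$ into itself. My strategy is to prove an ``approximate energy inequality'' along every Novikov trajectory $\tilde{\gamma}$ of $\vartheta_a$: differentiation gives $\tfrac{d}{dt} \tilde{f}_{\vartheta_b}(\tilde{\gamma}) = - \langle \vartheta_b, \vartheta_a \rangle_{g_{\vartheta_a}}$, and whenever $\Vert \vartheta_b - \vartheta_a \Vert_{g_{\vartheta_a}} \leq \tfrac{1}{2} \Vert \vartheta_a \Vert_{g_{\vartheta_a}}$ pointwise one deduces $\tfrac{d}{dt} \tilde{f}_{\vartheta_b}(\tilde{\gamma}) \leq \tfrac{1}{2} \tfrac{d}{dt} \tilde{f}_{\vartheta_a}(\tilde{\gamma})$. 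Inside the balls $B_\delta(x_i)$ this hypothesis is trivial since $\vartheta_a = \vartheta_b = \rho$; outside them Lemma \ref{lemma perturbations} supplies $\Vert \vartheta_a \Vert_{g_{\vartheta_a}} \geq C/8$ while Proposition \ref{proposition perturbation for the whole section of polytope} bounds $\Vert \vartheta_b - \vartheta_a \Vert_{g_{\vartheta_a}}$ by a constant multiple of $\Vert \theta_a - \rho \Vert + \Vert \theta_b - \rho \Vert$, which the preliminary reduction makes as small as needed. Integrating yields simultaneously the monotonicity $\tilde{f}_{\vartheta_b}(\tilde{x}) \geq \tilde{f}_{\vartheta_b}(\tilde{y})$ and the energy estimate $\tilde{f}_{\vartheta_a}(\tilde{x}) - \tilde{f}_{\vartheta_a}(\tilde{y}) \leq 2 (\tilde{f}_{\vartheta_b}(\tilde{x}) - \tilde{f}_{\vartheta_b}(\tilde{y}))$ along any trajectory from $\tilde{x}$ to $\tilde{y}$. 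A nonzero contribution $[\partial_{\vartheta_a} \xi]_{\tilde{y}}$ with $\tilde{f}_{\vartheta_b}(\tilde{y}) > c$ then forces $\tilde{x}$ into the finite set $\lbrace \xi_{\tilde{x}} \neq 0, \, \tilde{f}_{\vartheta_b}(\tilde{x}) > c \rbrace$ by monotonicity; the energy estimate together with the Palais-Smale lower bound on $\Vert \vartheta_a \Vert$ outside the critical balls confines $\tilde{y}$ to a set of bounded Riemannian distance from $\tilde{x}$ in the pullback metric on $\widetilde{M}_\mathcal{A}$, which by proper discontinuity of the $\Gamma_\mathcal{A}$-action contains only finitely many critical points.
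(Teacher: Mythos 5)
Your proposal follows the same overall plan as the paper — reduce to the regime where Proposition \ref{proposition perturbation for the whole section of polytope} applies, then control the finiteness condition via an energy estimate near the zeros — but two of the three ingredients are carried out with genuinely different arguments, and both variants are sound.

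For the energy estimate you prove the pointwise differential inequality $\tfrac{d}{dt}\tilde f_{\vartheta_b}(\tilde\gamma)\leq \tfrac12 \tfrac{d}{dt}\tilde f_{\vartheta_a}(\tilde\gamma)$ directly (splitting into the inside of the balls, where the two forms coincide, and the outside, where the Palais--Smale lower bound controls the denominator); integrating gives monotonicity of $\tilde f_{\vartheta_b}$ and the bound $E(\gamma)\leq 2\big(\tilde f_{\vartheta_b}(\tilde x)-\tilde f_{\vartheta_b}(\tilde y)\big)$ in one stroke. The paper instead estimates $\int_\gamma(\vartheta_b-\vartheta_a)$ via a Lebesgue-measure bound on the set where the gradient is large plus Cauchy--Schwarz. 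Your pointwise route is arguably tighter and more transparent. For the finiteness conclusion you argue that bounded energy, the lower bound on $\Vert\vartheta_a\Vert$ off the critical balls, and proper discontinuity of $\Gamma_{\mathcal{A}}$ force $\tilde y$ into a compact region containing only finitely many lifts of critical points; the paper instead runs a compactness-of-moduli-spaces argument, extracting a $C^\infty_{\mathrm{loc}}$-convergent subsequence and contradicting distinctness of the $\tilde y_n$. Both work, but note that the step ``bounded energy $\Rightarrow$ bounded Riemannian distance'' does need the lower bound on the velocity off the balls together with a bound on how often the trajectory can re-enter the balls; in a sketch this is fine to leave implicit, but it is the one place where your argument and the paper's do not obviously subsume one another.

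The one place you should be more careful is the initial reduction. You write that $\Vert\theta_a-dH\Vert< C/8$ can be arranged ``by passing to a cohomologous section $\theta_a+d\phi_a$ and, if necessary, rescaling $H$.'' As stated, neither operation alone (nor both performed independently) can achieve this: for $a\neq 0$ the norm of any representative of $a$ is bounded below, so $\theta_a+d\phi_a$ can never be made close to an exact form; and rescaling $H$ to $\lambda H$ inflates $C_{\lambda H}$ to $\lambda C_H$ but also inflates $\Vert\theta_a-\lambda\,dH\Vert$ to order $\lambda\Vert dH\Vert$, so the ratio is unchanged. What is actually needed is the \emph{coordinated} shift-and-scale of the paper: shift $\theta_a$ by $d(\lambda H)$ and simultaneously take $(\lambda H,g)$ as the reference pair (equivalently, shift by $dH$ and shrink the polytope to $\varepsilon\mathcal{A}$, as in the paper's Remark \ref{remark scaling of polytope is the same as scaling of reference Morse function}). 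Then $\Vert(\theta_a+d(\lambda H))-d(\lambda H)\Vert=\Vert\theta_a\Vert$ is bounded independently of $\lambda$ while $C_{\lambda H}=\lambda C_H$ grows, and compactness of $\mathcal{A}$ gives a uniform $\lambda$. You have the right idea, but the phrasing ``and, if necessary'' obscures that the two operations must be performed together with the same function $\lambda H$; I would state this explicitly.
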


The rough idea is to \emph{``shift-and-scale"}: we shift and scale the polytope $\mathcal{A}$ so that it is sufficiently close to a given exact one-form $dH$ in the operator norm $\Vert \cdot \Vert$ coming from $g$. Then one can perturb the scaled section by means of Proposition \ref{proposition perturbation for the whole section of polytope} and scale back. This will be the desired section $\vartheta$ on $\mathcal{A}$. By construction we will then see that the three bullet points are satisfied. The choices involved (i.e. choice of section $\theta$, reference pair $(H,g)$ and perturbation coming from Theorem \ref{theorem perturbed section}) will prove harmless -- they result in chain homotopy equivalent complexes. This is proven in the next subsection (cf. Theorem \ref{theorem independence of chain complexes}). 

At the cost of imposing a smallness condition on the underlying section, we get the same results for perturbations associated to non-exact reference pairs (cf. Corollary \ref{corollary other chain complex perturbed}) and the same independence of auxiliary data holds (cf. Theorem \ref{theorem indep of perturb with exact vs non-exact ref. pair}).

\begin{proof}[Proof of Theorem \ref{theorem perturbed section}]
As a first candidate for $\vartheta$, we pick $$\vartheta \colon \mathcal{A} \longrightarrow \Omega^1(M), \quad \vartheta_a:= \theta_a + dH.$$ This is still a section, but does not satisfy the bullet points above. Since $\theta$ is smooth, there exists $$\varepsilon=\varepsilon(\theta,H,g,\delta)>0,$$ such that 
$$\varepsilon \cdot \theta_a+dH \text{ is } \frac{C_H}{D \cdot 1000}\text{-close to } dH,$$ with respect to $\Vert \cdot \Vert$ induced by $g$, $C_H=C_H(\delta,g)>0$ and $D=D(N,g)>0$ chosen as in Proposition \ref{proposition perturbation for the whole section of polytope}. Now we can apply Proposition \ref{proposition perturbation for the whole section of polytope} to the section $ \varepsilon \cdot a \mapsto \varepsilon \cdot \theta_a+dH$ and obtain a new section
$$ \vartheta^{\varepsilon} \colon \varepsilon \cdot \mathcal{A} \longrightarrow \Omega^1(M). \footnote{Explicitly, this section is of the form $$\varepsilon \cdot a \mapsto \varepsilon \cdot \theta_a+dH + \sum_i d(f_i \cdot h_i),$$ where $f_i$ depends smoothly on $\theta_a$ and satisfies $df_i=\varepsilon \cdot \theta_a, \, f_i(x_i)=0$ around critical points $x_i$ of $H$, see Lemma \ref{lemma perturbations}, \eqref{equation unique primities in technical lemma} and \eqref{equation perturbation in technical lemma} applied to $\varepsilon \cdot \theta_a+ dH$ and $\rho=dH$.}.$$ Finally we scale back and redefine
\begin{equation*}
\vartheta=\vartheta(\theta,H,g,\varepsilon) \colon \mathcal{A} \longrightarrow \Omega^1(M), \quad a \mapsto \frac{1}{\varepsilon} \cdot \vartheta^{\varepsilon}(\varepsilon \cdot a).
\end{equation*}
Thus we have $$\varepsilon \cdot \vartheta_a=\vartheta^{\varepsilon}(\varepsilon \cdot a), \quad \forall a \in \mathcal{A}.$$
For each $\vartheta^{\varepsilon}(\varepsilon \cdot a)$ we choose a Riemannian metric denoted by $g_{a}$ as in Proposition \ref{proposition perturbation for the whole section of polytope}. Thus $(\vartheta^{\varepsilon}(\varepsilon \cdot a),g_{a})$ is Morse-Smale, and so is $(\vartheta_a,g_{a})$ since scaling does not affect the Morse-Smale property. This proves the first bullet point.

\begin{claim no number}\label{claim 1 in proof of theorem}
$\mathrm{CN}_\bullet\left(\vartheta^{\varepsilon}(\varepsilon \cdot a), \varepsilon \cdot \mathcal{A} \right)=\mathrm{CN}_\bullet(\varepsilon \cdot \vartheta_a,\varepsilon \cdot \mathcal{A})$ is a well-defined chain complex for any $a \in \mathcal{A}$.
\end{claim no number}
Indeed, assume for contradiction that there exists a Novikov-chain $\xi=\sum_{\tilde{x}} \xi_{\tilde{x}} \, \tilde{x} \in \mathrm{CN}_\bullet\left(\vartheta^{\varepsilon}(\varepsilon \cdot a), \varepsilon \cdot \mathcal{A} \right)$ such that $$\partial \xi \notin \mathrm{CN}_\bullet\left(\vartheta^{\varepsilon}(\varepsilon \cdot a), \varepsilon \cdot \mathcal{A} \right).$$ This means that there is some $\varepsilon \cdot b \in \varepsilon \cdot \mathcal{A}$, $c \in \R$ and sequences $\tilde{x}_n$ with $\xi_{\tilde{x}_n} \neq 0$, $\tilde{y}_n$ pairwise distinct, $\tilde{\gamma}_n \in \mathcal{\underline{M}}\left(\tilde{x}_n,\tilde{y}_n;\tilde{f}_{\varepsilon \cdot \vartheta_a}\right)$, and $$\tilde{f}_{\vartheta^{\varepsilon}(\varepsilon \cdot b)}(\tilde{y}_n)=\tilde{f}_{\varepsilon \cdot \vartheta_b}(\tilde{y}_n) \geq c,$$ see Remark \ref{remark indep of primitive and cohomol for finiteness condition}. 
Denote by $\gamma_n=\pi \circ \tilde{\gamma}_n$ the Novikov trajectories downstairs. The energy expression can then be massaged as follows:
\begin{align*}
0 \leq E(\tilde{\gamma}_n)=E(\gamma_n) &=-\int_{\gamma_n} \vartheta^{\varepsilon}(\varepsilon \cdot a) \\
&=-\int_{\gamma_n} \vartheta^{\varepsilon}(\varepsilon \cdot b) + \int_{\gamma_n} \vartheta^{\varepsilon}(\varepsilon \cdot b)-\vartheta^{\varepsilon}(\varepsilon \cdot a) \\
&=\tilde{f}_{\vartheta^{\varepsilon}(\varepsilon \cdot b)}(\tilde{x}_n)-\tilde{f}_{\vartheta^{\varepsilon}(\varepsilon \cdot b)}(\tilde{y}_n)+\int_{\gamma_n} \vartheta^{\varepsilon}(\varepsilon \cdot b)-\vartheta^{\varepsilon}(\varepsilon \cdot a) \\
&\leq \tilde{f}_{\vartheta^{\varepsilon}(\varepsilon \cdot b)}(\tilde{x}_n)-c+\int_{\gamma_n} \vartheta^{\varepsilon}(\varepsilon \cdot b)-\vartheta^{\varepsilon}(\varepsilon \cdot a).
\end{align*}
Showing that the rightmost term is bounded by $m \cdot E(\gamma_n), \, m \in (0,1)$ suffices to obtain a contradiction: admitting such a bound leads to 
$$0 \leq E(\gamma_n) \leq (1-m)^{-1} \cdot \left( \tilde{f}_{\vartheta^{\varepsilon}(\varepsilon \cdot b)}(\tilde{x}_n) - c \right).$$ In particular, $c \leq \tilde{f}_{\vartheta^{\varepsilon}(\varepsilon \cdot b)}(\tilde{x}_n)$ for all $n$. But $\xi$ belongs to $ \mathrm{CN}_\bullet\left(\varepsilon \cdot \vartheta_a, \varepsilon \cdot \mathcal{A} \right)$ and $\xi_{\tilde{x}_n} \neq 0$, thus the multi finiteness condition implies that there are only finitely many distinct $\tilde{x}_n$. Up to passing to a subsequence we can therefore assume $\tilde{x}_n=\tilde{x}$ and also $\tilde{y}_n \in \pi^{-1}(y)$.\footnote{The latter is possible since $Z(\varepsilon \cdot \vartheta_a)=Z(\varepsilon \cdot dH)=\mathrm{Crit}(H)$ is finite.} The corresponding Novikov trajectories $$\gamma_n \in \mathcal{\underline{M}}\left(x,y;\vartheta^{\varepsilon}(\varepsilon \cdot a)\right)$$ have uniformly bounded energy
$$E(\gamma_n) \leq (1-m)^{-1} \cdot  \left(\tilde{f}_{\vartheta^{\varepsilon}(\varepsilon \cdot b)}(\tilde{x})-c \right),$$ therefore $\gamma_n$ has a $C^{\infty}_{\mathrm{loc}}$-convergent subsequence. At the same time $\mathcal{\underline{M}}\left(x,y;\vartheta^{\varepsilon}(\varepsilon \cdot a)\right)$ is a $0$-dimensional manifold, which means that the convergent subsequence $\gamma_n$ eventually does not depend on $n$. This contradicts our assumption that the endpoints $\tilde{y}_n$ upstairs are pairwise disjoint.

Therefore we are only left to show the bound
$$A(\gamma_n):=\int_{\gamma_n} \vartheta^{\varepsilon}(\varepsilon \cdot b)-\vartheta^{\varepsilon}(\varepsilon \cdot a) \leq \frac{1}{2}E(\gamma_n)$$ to conclude the Claim. For this purpose we define $$\mathcal{S}_n:= \left\{ s \in \R \, \big | \, \Vert \nabla^{g_{a}} \left(\vartheta^{\varepsilon}(\varepsilon \cdot a) \right)(\gamma_n(s)) \Vert_{g_{a}} \geq \frac{C_H}{8} \right\}.$$
The crucial observation is that both $\vartheta^{\varepsilon}(\varepsilon \cdot b)$ and $\vartheta^{\varepsilon}(\varepsilon \cdot a)$ agree with $dH$ around $\mathrm{Crit}(H)$, by choice of $\vartheta^{\varepsilon}$ via Proposition \ref{proposition perturbation for the whole section of polytope}. In particular
$$\restr{\vartheta^{\varepsilon}(\varepsilon \cdot b)-\vartheta^{\varepsilon}(\varepsilon \cdot a)}{B_\delta(z)}=0, \quad \forall z \in \mathrm{Crit}(H)=Z(\vartheta^{\varepsilon}(\varepsilon \cdot b))=Z(\vartheta^{\varepsilon}(\varepsilon \cdot a)).$$
Lemma \ref{lemma perturbations} says that for $s \in \R \setminus \mathcal{S}_n$ we get $$\gamma_n(s) \in \bigcup_{z \in Z(\vartheta^{\varepsilon}(\varepsilon \cdot a))}B_\delta(z),$$ consequently
\begin{equation}\label{equation no contribution around zeros}
\int_{\R \setminus \mathcal{S}_n} \left( \vartheta^{\varepsilon}(\varepsilon \cdot b)-\vartheta^{\varepsilon}(\varepsilon \cdot a)\right) \, \dot{\gamma}_n(s) \, ds=0.
\end{equation}
The Lebesgue measure $\mu(\mathcal{S}_n)$ can be bounded using the energy:
$$E(\gamma_n)=-\int_{\gamma_n} \vartheta^{\varepsilon}(\varepsilon \cdot a) =\int_\R \Vert \nabla^{g_{a}} \left(\vartheta^{\varepsilon}(\varepsilon \cdot a) \right)(\gamma_n(s)) \Vert_{g_{a}}^2 \, ds \geq \mu(\mathcal{S}_n) \cdot \left(\frac{C_H}{8} \right)^2,$$
thus 
\begin{equation}\label{equation lebesgue measure outside zero nbhd bound}
\mu(\mathcal{S}_n) \leq \left(\frac{8}{C_H} \right)^2 \cdot E(\gamma_n).
\end{equation}
And finally
\begin{align*}
\vert A(\gamma_n) \vert &\leq \Vert \vartheta^{\varepsilon}(\varepsilon \cdot b)-\vartheta^{\varepsilon}(\varepsilon \cdot a) \Vert_{g_{a}} \cdot \int_{\mathcal{S}_n} \Vert \dot{\gamma}_n(s) \Vert_{g_{a}} \, ds &&\text{ by } \eqref{equation no contribution around zeros}, \\
&\leq \big (\Vert \vartheta^{\varepsilon}(\varepsilon \cdot b)- dH \Vert_{g_{a}}+ \\
& \quad \quad + \Vert dH-\vartheta^{\varepsilon}(\varepsilon \cdot a) \Vert_{g_{a}} \big) \cdot \mu(\mathcal{S}_n)^{\frac{1}{2}} \cdot E(\gamma_n)^{\frac{1}{2}} \\
&\leq 5 D \cdot \big(\Vert \varepsilon \cdot \vartheta_b- dH \Vert +  \\
& \quad \quad +\Vert dH-\varepsilon \cdot \vartheta_a \Vert \big) \cdot \frac{8}{C_H} \cdot E(\gamma_n) &&\text{ Proposition } \ref{proposition perturbation for the whole section of polytope}, \, \eqref{equation lebesgue measure outside zero nbhd bound}, \\
&\leq \frac{80 \cdot D \cdot  C_H}{D \cdot 1000 \cdot C_H} \cdot E(\gamma_n) && \text{ by choice of scaling } \varepsilon>0, \\
&<\frac{1}{10}E(\gamma_n).
\end{align*}
This proves the Claim.

Now we observe that scaling $\vartheta_a$ by $r>0$ does not affect the zeros and that the moduli spaces associated to $(\vartheta_a,g_{a})$ are in one to one correspondence with those of $(r \cdot \vartheta_a,g_{a})$. It is also clear that the multi finiteness condition imposed by $\mathcal{A}$ is equivalent to that of $r \cdot \mathcal{A}$. All in all this means that for any $r>0$ the polytope chain complexes associated to $(r \cdot \vartheta_a,g_{a})$ agree with each other. This proves the ray-invariance. Setting $r=\frac{1}{\varepsilon}$ and using the Claim proves the remaining first bullet point.

\end{proof}

\begin{remark}\label{remark scaling of polytope is the same as scaling of reference Morse function}
Instead of running the argument for the sections $\vartheta^{\varepsilon} \colon \varepsilon \cdot \mathcal{A} \to \Omega^1(M)$ we could also work with $\vartheta=\frac{1}{\varepsilon} \cdot \vartheta^{\varepsilon} \colon \mathcal{A} \to \Omega^1(M)$ by directly by applying Proposition \ref{proposition perturbation for the whole section of polytope} to the section $$a \mapsto \theta_a+ d\left( \varepsilon^{-1}H\right)$$ and $(\varepsilon^{-1}H,g)$. These two approaches are equivalent, the only difference is psychological: we find it more natural to visualize the shrinking of the polytope opposed to the scaling of Morse functions.
Note that the analogous bound at the end of the proof of Theorem \ref{theorem perturbed section} holds upon replacing $H$ by $\varepsilon^{-1}H$. This follows from the nice scaling behaviour of the PS-constants: $$C_{\varepsilon^{-1}H}(\delta,g)=\varepsilon^{-1}C_H(\delta,g),$$ therefore  ``$\varepsilon \cdot \theta_a+dH$ is $\frac{C_H}{D \cdot 1000}$-close to $dH$" if and only if ``$\theta_a + d(\varepsilon^{-1}H)$ is $\frac{C_{\varepsilon^{-1}H}}{D \cdot 1000}=\frac{C_H}{\varepsilon \cdot D \cdot 1000}$-close to $d(\varepsilon^{-1}H)$". 

\end{remark}

\begin{remark}
The skeptical reader might wonder whether $\partial_{\vartheta_a}^2=0$ really holds. Viewing the chain group $\mathrm{CN}_\bullet(\vartheta_a,\mathcal{A})$ as a certain twisted chain group allows for a quick and simple proof -- see Remark \ref{remark novikov weights to prove partial squared equal to 0}.
\end{remark}

The key in the proof of Theorem \ref{theorem perturbed section} was to obtain control over the energy by perturbing the section $\theta \colon \mathcal{A} \to \Omega^1(M)$ via Proposition \ref{proposition perturbation for the whole section of polytope}. The perturbation was chosen so that there would be no contribution to the energy near the zeros. Similar ideas to control the energy can be found in \cite[Subsection 3.6.2]{benedetti2016}, \cite{zhang2019}. 

The question remains why we used an (exact) reference pair $(H,g)$ instead of a more general Morse-Smale pair $(\rho,g)$ in Theorem \ref{theorem perturbed section}. The answer is simple: the given argument already breaks down in the very first line -- the corresponding $\vartheta$ is not a section anymore, since the $\rho$-shift changes the cohomology class. However, whenever the section $\theta \colon \mathcal{A} \to \Omega^1(M)$ is already \emph{sufficiently close} to $(\rho,g)$ in terms of the corresponding PS-constant $C_\rho>0$, we do not need to shift and scale $\theta$, and can perturb $\theta$ directly:
\begin{corollary}\label{corollary other chain complex perturbed}
Let $(\rho,g)$ be a Morse-Smale pair, $C=C_\rho>0$, $N=\bigcup_i B_\delta(x_i)$ with $x_i \in Z(\rho)$, and $D=d(N,\delta)>0$ as in Proposition \ref{proposition perturbation for the whole section of polytope}. Let $\theta \colon \mathcal{A} \to \Omega^1(M)$ be a smooth section such that
\begin{equation*}
\Vert \theta_a-\rho \Vert < \frac{C_\rho}{D \cdot 1000 },
\end{equation*}
with $\Vert \cdot \Vert$ the operator norm induced by $g$.
Then there exists a perturbed section
\begin{equation}
\vartheta=\vartheta(\theta,\rho,g) \colon \mathcal{A} \longrightarrow \Omega^1(M),
\end{equation}
and $g_{\vartheta_a}$ such that the same conclusions as in Theorem \ref{theorem perturbed section} hold.
\end{corollary}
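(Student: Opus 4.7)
The plan is to mimic the proof of Theorem \ref{theorem perturbed section}, but skip the \emph{shift-and-scale} step entirely: since $\theta$ is already assumed to be uniformly close to $\rho$ (rather than merely close after shifting by $dH$ and scaling), we can feed $\theta$ directly into Proposition \ref{proposition perturbation for the whole section of polytope}. The smallness hypothesis $\|\theta_a-\rho\|<\frac{C_\rho}{D\cdot 1000}$ is stronger than $\|\theta_a-\rho\|<\frac{C_\rho}{8}$ (because $D\cdot 1000>8$), so Proposition \ref{proposition perturbation for the whole section of polytope} immediately produces the perturbed section $\vartheta=\vartheta(\theta,\rho,g)\colon\mathcal{A}\to\Omega^1(M)$ and a family of Riemannian metrics $g_{\vartheta_a}$ satisfying $Z(\vartheta_a)=Z(\rho)$, $\vartheta_a\equiv\rho$ on each $B_\delta(x_i)$, the Morse-Smale property for $(\vartheta_a,g_{\vartheta_a})$, and the inequality $\|\vartheta_b-\rho\|_{\vartheta_a}\leq 5D\|\theta_b-\rho\|$ for all $a,b\in\mathcal{A}$. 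This takes care of the Morse-Smale bullet.

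Next I would establish the chain-complex bullet by repeating the argument of Claim in the proof of Theorem \ref{theorem perturbed section}, with $\rho$ playing the role that $dH$ played there. Suppose for contradiction there is $\xi\in\mathrm{CN}_\bullet(\vartheta_a,\mathcal{A})$, $b\in\mathcal{A}$, $c\in\R$, and sequences $\tilde x_n,\tilde y_n,\tilde\gamma_n$ violating the multi finiteness condition for $\partial\xi$. Writing $\gamma_n=\pi\circ\tilde\gamma_n$ and splitting the primitive difference along the trajectory yields
\begin{equation*}
0\leq E(\gamma_n)\leq \tilde f_{\vartheta_b}(\tilde x_n)-c+\int_{\gamma_n}(\vartheta_b-\vartheta_a),
\end{equation*}
so it suffices to bound $A(\gamma_n):=\int_{\gamma_n}(\vartheta_b-\vartheta_a)$ by $mE(\gamma_n)$ for some $m\in(0,1)$. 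Define
\begin{equation*}
\mathcal{S}_n:=\bigl\{s\in\R\,\big|\,\|\nabla^{g_{\vartheta_a}}\vartheta_a(\gamma_n(s))\|_{\vartheta_a}\geq \tfrac{C_\rho}{8}\bigr\}.
\end{equation*}
Since both $\vartheta_a$ and $\vartheta_b$ coincide with $\rho$ on $\bigcup_i B_\delta(x_i)$ and Lemma \ref{lemma perturbations} forces $\gamma_n(s)\in\bigcup_i B_\delta(x_i)$ for $s\notin\mathcal{S}_n$, the integrand of $A(\gamma_n)$ vanishes outside $\mathcal{S}_n$. The standard energy-identity bound $\mu(\mathcal{S}_n)\leq (8/C_\rho)^2 E(\gamma_n)$ combined with Cauchy-Schwarz and the inequality $\|\vartheta_b-\rho\|_{\vartheta_a}\leq 5D\|\theta_b-\rho\|$ gives
\begin{equation*}
|A(\gamma_n)|\leq 5D\bigl(\|\theta_b-\rho\|+\|\theta_a-\rho\|\bigr)\cdot\tfrac{8}{C_\rho}\cdot E(\gamma_n)<\tfrac{80}{1000}E(\gamma_n)=\tfrac{1}{12.5}E(\gamma_n),
\end{equation*}
by the smallness hypothesis. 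This bounds $E(\gamma_n)$ uniformly, forcing the $\tilde x_n$ to lie in a finite set (by the multi finiteness condition on $\xi$) and then, by $C^\infty_{\mathrm{loc}}$-compactness and zero-dimensionality of the moduli space, forcing the $\tilde\gamma_n$ to stabilize — contradicting pairwise distinctness of the $\tilde y_n$.

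For the ray-invariance bullet, I would observe verbatim as in the theorem that scaling by $r>0$ neither changes zeros nor alters unparametrized Morse trajectories, and the multi finiteness condition imposed by $\mathcal{A}$ coincides with that imposed by $r\cdot\mathcal{A}$; hence $\mathrm{CN}_\bullet(\vartheta_a,g_{\vartheta_a},\mathcal{A})=\mathrm{CN}_\bullet(r\vartheta_a,g_{\vartheta_a},r\mathcal{A})$ for all $r>0$. The main obstacle is really just the energy-estimate bookkeeping, and the factor of $1000$ in the smallness assumption was chosen precisely so that the final constant $\frac{80}{1000}$ stays safely below $1$; everything else is a direct transcription of the Theorem \ref{theorem perturbed section} argument with $\rho$ in place of $dH$.
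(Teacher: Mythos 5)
Your proof is correct and takes exactly the same route as the paper, which simply declares the argument to be "word for word the same" as Theorem~\ref{theorem perturbed section} after substituting $\theta_a$ and $\rho$ for $\varepsilon\theta_a+dH$ and $dH$; you have spelled out the substitution explicitly and verified the constants work (with $80/1000<1$). The only point worth flagging is that your remark "$D\cdot 1000>8$" implicitly uses $D\geq 1$, which does hold since $D$ is defined by the two-sided metric comparison $\frac{1}{D^2}g_{\vartheta_a}\leq g\leq D^2 g_{\vartheta_a}$.
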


\begin{proof}
Upon replacing $\varepsilon \cdot \theta_a+dH$ and $dH$ with $\theta_a$ and $\rho$, the proof is word for word the same as the one of Theorem \ref{theorem perturbed section}.

\end{proof}
 
\subsection{Independence of the data}

The section $\vartheta=\vartheta(\theta,H,g)$ constructed in Theorem \ref{theorem perturbed section} does not only depend on $(\theta,H,g)$, but also comes with a choice of scaling $\varepsilon(\theta,H,g,\delta)>0$. We shall prove that any valid perturbation $\vartheta^i=\vartheta^i(\theta_i,H_i,g_i,\varepsilon_i)$ in the sense of Theorem \ref{theorem perturbed section} gives rise to chain homotopy equivalent chain complexes. The same is true for perturbations coming from Corollary \ref{corollary other chain complex perturbed} and at the end of the subsection we will show that both perturbations lead to chain homotopy equivalent Novikov complexes.

\begin{remark}
All the chain maps and chain homotopy equivalences constructed from here on are Novikov-module morphisms, i.e. linear over the Novikov ring. We will not explicitly state this every time for better readability.
\end{remark}

\begin{theorem}\label{theorem independence of chain complexes}
For $i=0,1$, let $\theta_i \colon \mathcal{A} \to \Omega^1(M)$ be sections, $(H_i,g_i)$ Morse-Smale pairs and $\delta_i>0, \, \varepsilon_i(\theta_i,H_i,g_i,\delta_i)>0 $  as in proof of Theorem \ref{theorem perturbed section}. 

Then any two perturbed sections 
\begin{equation}
\vartheta^i=\vartheta^i(\theta_i,H_i,g_i,\varepsilon_i) \colon \mathcal{A} \to \Omega^1(M),
\end{equation} in the sense of Theorem \ref{theorem perturbed section}, induce chain homotopy equivalent polytope complexes:
\begin{equation}
\mathrm{CN}_\bullet(\vartheta^0_a,\mathcal{A}) \simeq \mathrm{CN}_\bullet(\vartheta^1_a,\mathcal{A}), \quad \forall a \in \mathcal{A}.
\end{equation}

\end{theorem}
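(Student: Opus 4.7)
The approach is to build a continuation chain map between the two polytope complexes by interpolating the data and counting time-dependent gradient trajectories on $\widetilde{M}_\mathcal{A}$, with the nontrivial technical content being an energy estimate that mirrors the Claim in the proof of Theorem \ref{theorem perturbed section}. First I interpolate $(\theta_i, H_i, g_i, \varepsilon_i)$, $i=0,1$, by a smooth path $(\theta^t, H^t, g^t, \varepsilon^t)$, $t \in [0,1]$, of sections and Morse--Smale reference pairs. By further shrinking the scalings $\varepsilon^t$ and invoking the ray-invariance in Theorem \ref{theorem perturbed section}, I may arrange that the resulting perturbations $\vartheta^t \colon \mathcal{A} \to \Omega^1(M)$ coming from Proposition \ref{proposition perturbation for the whole section of polytope} all satisfy $\vartheta^t_b \equiv dH^t$ on a neighborhood of $\mathrm{Crit}(H^t)$ and $\Vert \vartheta^t_b - dH^t \Vert < C_{H^t}/(D \cdot 1000)$ for every $b \in \mathcal{A}$, uniformly in $t$. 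Fix $a \in \mathcal{A}$, choose a smooth monotone $\tau \colon \R \to [0,1]$ constant outside a compact interval, and a choice of metrics $g^\tau_a$ interpolating $g_{\vartheta^0_a}$ and $g_{\vartheta^1_a}$.

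I then define $\Phi_a \colon \mathrm{CN}_\bullet(\vartheta^0_a, \mathcal{A}) \to \mathrm{CN}_\bullet(\vartheta^1_a, \mathcal{A})$ as the $\Z[\Gamma_\mathcal{A}]$-linear extension of the signed count of rigid solutions $\tilde\gamma \in C^\infty(\R, \widetilde{M}_\mathcal{A})$ of the $s$-dependent gradient equation $\dot{\tilde\gamma}(s) + \nabla^{g^{\tau(s)}_a} \tilde f^{\tau(s)}_a(\tilde\gamma(s)) = 0$, with asymptotics in lifts of $Z(\vartheta^0_a) = \mathrm{Crit}(H_0)$ and $Z(\vartheta^1_a) = \mathrm{Crit}(H_1)$. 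The reverse map $\Psi_a$ is built analogously by swapping the roles.

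The main obstacle is showing that $\Phi_a$ preserves the multi-finiteness condition \eqref{equation multi finiteness condition}. For each $b \in \mathcal{A}$, I pick a smooth family of primitives $\tilde f^\tau_b$ of $\pi^* \vartheta^\tau_b$ on $\widetilde{M}_\mathcal{A}$ and, by differentiating $\tilde f^{\tau(s)}_b(\tilde\gamma(s))$ in $s$ and integrating, obtain the identity $E(\tilde\gamma) = \tilde f^0_b(\tilde x) - \tilde f^1_b(\tilde y) + \int_\R \tau'(s)\, (\partial_\tau \tilde f^\tau_b)(\tilde\gamma(s))\, ds + \int_\R \pi^*(\vartheta^{\tau(s)}_b - \vartheta^{\tau(s)}_a)(\dot{\tilde\gamma}(s))\, ds$. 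The third term is bounded by a constant $K_b$ independent of $\tilde\gamma$ because the $t$-dependence is compactly supported and each $\vartheta^t_b - \vartheta^0_b$ is exact on the closed manifold $M$. The fourth term is estimated exactly as in the Claim in the proof of Theorem \ref{theorem perturbed section}: the integrand vanishes on the $\delta$-balls around $\mathrm{Crit}(H^\tau)$ where $\vartheta^\tau_b$ and $\vartheta^\tau_a$ both coincide with $dH^\tau$, and outside these balls the Lebesgue measure of the relevant set is controlled by $E(\tilde\gamma)/(C_{H^\tau}/8)^2$ via Lemma \ref{lemma perturbations}, yielding, after our uniform smallness choice, a bound of the form $\tfrac{1}{2} E(\tilde\gamma)$. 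Rearranging produces the a-priori bound $E(\tilde\gamma) \leq 2\bigl(\tilde f^0_b(\tilde x) - \tilde f^1_b(\tilde y) + K_b\bigr)$. For fixed $b$ and $c$, any $\tilde y$ with $\tilde f^1_b(\tilde y) \geq c$ receiving a trajectory from $\tilde x$ in the support of $\xi$ forces $\tilde f^0_b(\tilde x) \geq c - K_b$, so the multi-finiteness of $\xi$ restricts $\tilde x$ to a finite set; for each such $\tilde x$, the resulting trajectories have uniformly bounded energy, and standard $C^\infty_{\mathrm{loc}}$-compactness together with the $0$-dimensionality of the rigid moduli spaces rules out infinitely many distinct $\tilde y$.

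The remaining verifications are routine. The parametrized version of the same energy estimate, applied to the one-parameter family of continuation moduli spaces obtained from a generic homotopy of homotopies, yields both the chain map identity $\Phi_a \circ \partial_{\vartheta^0_a} = \partial_{\vartheta^1_a} \circ \Phi_a$ and, by the two-parameter version, chain homotopies $K_a^{\pm}$ realising $\Psi_a \circ \Phi_a \simeq \mathrm{id}$ and $\Phi_a \circ \Psi_a \simeq \mathrm{id}$ on the polytope complexes, proving the desired chain homotopy equivalence.
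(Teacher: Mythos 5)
Your energy identity and the resulting a priori bound $E(\tilde\gamma) \leq 2\bigl(\tilde f^0_b(\tilde x) - \tilde f^1_b(\tilde y) + K_b\bigr)$ are correctly computed, and the finiteness argument from there is sound; moreover your treatment of the $\partial_\tau\tilde f^\tau_b$ term matches what the paper relegates to a footnote. However there is a genuine gap at the very first step: you posit a smooth path of \emph{Morse--Smale} reference pairs $(H^t,g^t)$ joining $(H_0,g_0)$ to $(H_1,g_1)$, and you need every $H^t$ to be Morse in order to apply Proposition \ref{proposition perturbation for the whole section of polytope} and obtain the key localization ``$\vartheta^t_b\equiv\vartheta^t_a$ near $\mathrm{Crit}(H^t)$''. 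Such a path does not exist in general: the number of critical points is locally constant on the space of Morse functions, so that space is disconnected, and $H_0$, $H_1$ may well lie in different components. Since the Palais--Smale constant $C_{H^t}$ degenerates as $H^t$ approaches a birth--death singularity, your estimate for the fourth term blows up precisely where the interpolation is forced to leave the Morse stratum, so the gap cannot be repaired by a naive genericity argument.

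This is exactly the point where the paper takes a different route. Rather than re-running the perturbation construction along a path of reference data, the paper interpolates the \emph{already-perturbed} sections directly, $\vartheta^s=(1-h(s))\vartheta^0+h(s)\vartheta^1$, accepting that for $s\in[0,1]$ the form $\vartheta^s_a$ has no controlled Morse structure. The Palais--Smale localization is then used only on the tails $s<0$ and $s>1$ (where $\vartheta^s$ coincides with $\vartheta^0$ resp.\ $\vartheta^1$, which do have the nice structure near $\mathrm{Crit}(H_i)$), while the middle segment $s\in[0,1]$ is bounded crudely by $F\cdot E(\gamma_n)^{1/2}$ using compactness of $[0,1]\times\mathcal{A}$. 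The case distinction ``$\tfrac15 E\geq F E^{1/2}$ or not'' that follows is the price paid for not controlling the zeros of $\vartheta^s_a$ in the interpolation region --- but it completely sidesteps the need for intermediate reference pairs to be Morse. Your cleaner estimate only goes through in the special case $(H_0,g_0)=(H_1,g_1)$; to handle the general case you would need to graft in something like the paper's uniform-constant argument for a middle region, at which point the two proofs converge.
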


\begin{proof}[Proof of Theorem \ref{theorem independence of chain complexes}]

We may assume $\varepsilon_1 \geq \varepsilon_0$. Denote by $$\vartheta^i(\sigma_i,H_i,g_i,\varepsilon_i), \quad i=0,1,$$ the respective sections on $\mathcal{A}$ as in the first part of the proof of Theorem \ref{theorem perturbed section}. Let 
\begin{equation}\label{equation smooth monotone function h}
h \colon [0,1] \to \R
\end{equation}
be a smooth function with $h \equiv 0$ on $(-\infty,e)$ and $h\equiv 1$ on $(1-e,+\infty)$, for some small $e>0$, and set
\begin{align*}
\vartheta^s &=(1-h(s)) \cdot \vartheta^0 +h(s) \cdot \vartheta^1.
\end{align*}
Fix $a \in \mathcal{A}$ and pick $g_i:=g_{\vartheta^i_a}$, $i=0,1$ two metrics as in Theorem \ref{theorem perturbed section}. Let $g_s=g_s(a)$ be a homotopy of Riemannian metrics connecting $g_0$ to $g_1$ and assume that $(\vartheta^s_a,g_s)$ is regular -- this is rectified by Remark \ref{remark homotopy regular no problemo} below. Note that here $g_s$ actually depends on $a$.

To this regular homotopy we can now associate a chain continuation 
\begin{equation}\label{equation chain continuation map tau s}
\Psi^{10} \colon \mathrm{CN}_\bullet(\vartheta^0_a,\mathcal{A}) \longrightarrow \mathrm{CN}_\bullet(\vartheta^1_a,\mathcal{A}), \quad \xi=\sum_{\tilde{x}} \xi_{\tilde{x}} \, \tilde{x} \mapsto \sum_{\tilde{x},\tilde{y}} \xi_{\tilde{x}} \cdot \#_{\mathrm{alg}} \, \mathcal{M}\left(\tilde{x},\tilde{y};\tilde{f}_{\vartheta^s_a}\right) \, \tilde{y}.
\end{equation}
Analogously to the case of the boundary operator in Theorem \ref{theorem perturbed section}, proving that $\Psi^{10}$ defines a well defined Novikov chain map essentially boils down to proving that it respects the multi finiteness condition -- the rest follows by standard Novikov-Morse techniques. Thus, proceeding as in Theorem \ref{theorem perturbed section} reveals that it suffices\footnote{This is also implicitly using that $\tilde{f}_{\vartheta^s_b}=\tilde{f}_{\vartheta^0_b}+h_s\circ \pi$ for a smooth family $h_s \in C^{\infty}(M)$ since $\vartheta^s_b$ are cohomologous for all $s$. Hence $$-\int_{\gamma_n} \vartheta^s_b=-\tilde{f}_{\vartheta^1_b}(\tilde{y}_n)+\tilde{f}_{\vartheta^0_b}(\tilde{x}_n)+\underbrace{\int_{[0,1]} \frac{\partial h_s}{\partial s}(\gamma_n(s)) \, ds}_{\leq C},$$ for some uniform constant $C$.} to bound 
\begin{equation}\label{equation bad guy in espilon s contiuaiton}
\int_{\gamma_n} \vartheta^s_b-\vartheta^s_a
\end{equation}
by either a multiple $m \in (0,1)$ of the energy $E(\gamma_n)$, where $b$ is some cohomology class in $\mathcal{A}$, or a uniform bound\footnote{Uniform in $b$ and $n \in \N$, that is.} altogether. Set 
\begin{align*}
\mathcal{S}_n^0&:=\ \left\lbrace s \in (-\infty,0) \, \big | \,\Vert \nabla^{g_0} \vartheta^0_a (\gamma_n(s)) \Vert_{g_0} \geq \frac{C_H}{\varepsilon_0 \cdot 8} \right\rbrace, \\ \mathcal{S}_n^1&:=\ \left\lbrace s \in (1,+\infty) \, \big | \,\Vert \nabla^{g_1} \vartheta^1_a (\gamma_n(s)) \Vert_{g_1} \geq \frac{C_H}{\varepsilon_1 \cdot 8} \right\rbrace.
\end{align*}
This time around we need to divide by $\varepsilon_i$ as we are running the continuation directly on the original polytope $\mathcal{A}$ instead of the scaled polytope (see Remark \ref{remark scaling of polytope is the same as scaling of reference Morse function}). As in the previous proof of Theorem \ref{theorem perturbed section}, the $s \in \R_{\leq 0}\setminus \mathcal{S}_n^0$ and $s \in \R_{\geq 1} \setminus \mathcal{S}_n^1$ do not contribute to \eqref{equation bad guy in espilon s contiuaiton} as $\gamma_n(s)$ will be near the zeros of $H_i$, where $\vartheta^i_b=\vartheta^i_a$. On the other hand, using similar arguments we obtain
\begin{align*}
\bigg \vert \int_{\mathcal{S}_n^0 \cup \mathcal{S}_n^1} \left(\vartheta^s_b- \vartheta^s_a\right) \, \dot{\gamma}_n(s) \, ds \bigg \vert &\leq 2\cdot \max_{i=0,1} \Vert \vartheta^i_b-\vartheta^i_a \Vert_i \cdot \mu(\mathcal{S}^i_n)^{\frac{1}{2}} \cdot E(\gamma_n)^{\frac{1}{2}}   \\
&\leq 2\cdot \max_{i=0,1} \left(\Vert \vartheta^i_b-dH_i \Vert_i+ \Vert dH_i - \vartheta^i_a \Vert_i \right) \cdot \frac{\varepsilon_i \cdot 8}{C_H} \cdot E(\gamma_n)\\
&\leq \max_{i=0,1}  \frac{ 4 \cdot 5 \cdot D \cdot \varepsilon_i \cdot 8 \cdot C_H }{D \cdot \varepsilon_i \cdot 1000 \cdot C_H} \cdot E(\gamma_n) \\
&\leq \frac{1}{5} \cdot E(\gamma_n),
\end{align*}
where we have used Proposition \ref{proposition perturbation for the whole section of polytope} as in Theorem \ref{theorem perturbed section}.
We are left to bound \eqref{equation bad guy in espilon s contiuaiton} for $s \in [0,1]$. For this we compute via Cauchy-Schwarz:
\begin{align*}
\bigg \vert \int_{[0,1]} \left(\vartheta^s_b- \vartheta^s_a\right) \, \dot{\gamma}_n(s) \, ds \bigg \vert &\leq \max_{s \in [0,1]} \Vert \vartheta^s_b-\vartheta^s_a \Vert_s \cdot E(\gamma_n)^{\frac{1}{2}}.
\end{align*}
By compactness of $[0,1], \, \mathcal{A}$ and continuity of $\vartheta \colon [0,1] \times \mathcal{A} \to \Omega^1(M)$, we may bound
$$\max_{s \in [0,1]} \Vert \vartheta^s_b-\vartheta^s_a \Vert_s \leq F,$$ where $F>0$ is a uniform constant in $s \in [0,1]$ and $b \in \mathcal{A}$ -- recall that $g_s$ depends on $a$, but that does not matter. In particular, this proves
\begin{equation*}
\bigg \vert \int_{\gamma_n} \vartheta^s_b-\vartheta^s_a \, \bigg \vert \leq F \cdot E(\gamma_n)^{\frac{1}{2}}+\frac{1}{5} \cdot E(\gamma_n), \quad \forall n \in \N, b \in \mathcal{A}.
\end{equation*}
A case distinction now does the job: for any $n \in \N$ we either have $\frac{1}{5}E(\gamma_n) \geq F \cdot E(\gamma_n)^{\frac{1}{2}}$ or $\frac{1}{5}E(\gamma_n) < F \cdot E(\gamma_n)^{\frac{1}{2}}$. In the first case we can bound the norm of \eqref{equation bad guy in espilon s contiuaiton} by $\frac{2}{5} \cdot E(\gamma_n)$, whereas in the second case we get $\frac{1}{5} \cdot E(\gamma_n)^{\frac{1}{2}}<F$ and thus we may bound the norm of \eqref{equation bad guy in espilon s contiuaiton} by $10F^2$. This proves
$$\bigg \vert \int_{\gamma_n} \vartheta^s_b-\vartheta^s_a \bigg \vert \leq \max \left\lbrace 10F^2, \frac{2}{5}E(\gamma_n) \right\rbrace.$$ As explained before, this suffices to conclude that $\Psi^{10}$ defines a well defined Novikov chain map, which defines the desired chain homotopy equivalence (see proof of Theorem \ref{theorem perturbed section} for more details). This concludes the proof.

\end{proof}

\begin{remark}\label{remark homotopy regular no problemo}
The (linear) homotopy $(\vartheta^s_a,g_s)$ chosen in the proof of Theorem \ref{theorem independence of chain complexes} might be non-regular. One can replace $(\vartheta^s_a,g_s)$ with an arbitrarily close regular homotopy $((\vartheta^s_a)',g_s')$ connecting the same data. The only bit where this affects the previous argument in Theorem \ref{theorem independence of chain complexes} is when trying to bound $\max_{s \in [0,1]} \Vert \vartheta^s_b-(\vartheta^s_a)' \Vert_s'$. By using that $\vartheta^s_a$ is smooth in $s$ and close to $(\vartheta^s_a)'$, we still get the desired uniform bound $b$.
\end{remark}

As a consequence of Theorem \ref{theorem perturbed section} and Theorem \ref{theorem independence of chain complexes} we obtain the analogue results for restrictions to subpolytopes $\mathcal{B}\subseteq \mathcal{A}$:

\begin{corollary}\label{corollary restricted polytopes also give well defined chain complexes}
Let $\theta \colon \mathcal{A} \to \Omega^1(M)$ be a section. Then for every perturbed section $\vartheta \colon \mathcal{A} \to \Omega^1(M)$ coming from Theorem \ref{theorem perturbed section} and subpolytope $\mathcal{B} \subseteq \mathcal{A}$ we obtain a well defined polytope chain complex 
\begin{equation}\label{equation inclusion chain complexes upon restricting}
\left(\mathrm{CN}_\bullet \left( \vartheta_a,\mathcal{A}  |_{\mathcal{B}}\right),\partial_{\vartheta_a}\right), \quad \forall a \in \mathcal{A},
\end{equation}
satisfying all the bullet points of Theorem \ref{theorem perturbed section}. Any other choice $\vartheta'=\vartheta'(\theta',H',g')$ does not affect the chain complexes up to chain homotopy equivalence.

Moreover, the inclusion 
\begin{equation}\label{equation chain inclusions of restrictions}
\iota_{\mathcal{B}} \colon \mathrm{CN}_\bullet(\vartheta_a,\mathcal{A}) \longrightarrow \mathrm{CN}_\bullet\left(\vartheta_a,\mathcal{A}  |_{\mathcal{B}}\right)
\end{equation}
defines a Novikov-linear chain map for all $a \in \mathcal{A}$.
\end{corollary}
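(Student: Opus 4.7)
The plan is to piggyback on the arguments in Theorem \ref{theorem perturbed section} and Theorem \ref{theorem independence of chain complexes}, noting that each of their energy estimates was performed one class $b$ at a time, so the proofs go through verbatim once we weaken the $\mathcal{A}$-finiteness to the $\mathcal{A}|_\mathcal{B}$-finiteness. First I would record the two structural observations: since $\mathcal{B}\subseteq\mathcal{A}$, the $\mathcal{A}|_\mathcal{B}$-multi finiteness condition is strictly weaker than the $\mathcal{A}$-version, so we immediately get the group-level inclusion $\mathrm{CN}_\bullet(\vartheta_a,\mathcal{A})\subseteq \mathrm{CN}_\bullet(\vartheta_a,\mathcal{A}|_\mathcal{B})$, and both sides sit inside the same ambient completion $\widehat{V}_\bullet(\vartheta_a,\mathcal{A})_a$ built on the same cover $\widetilde{M}_\mathcal{A}$. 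In particular, the two complexes share the same generators and the same formal boundary operator, namely the count of Novikov trajectories of $\tilde{f}_{\vartheta_a}$ with respect to $g_{\vartheta_a}$.

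Next I would verify that $\partial_{\vartheta_a}$ preserves the $\mathcal{A}|_\mathcal{B}$-finiteness by rerunning the Claim inside the proof of Theorem \ref{theorem perturbed section}, but only for classes $b\in\mathcal{B}$. If $\partial_{\vartheta_a}\xi$ violated the $b$-finiteness for some such $b$, one would extract sequences $\tilde{x}_n,\tilde{y}_n,\tilde{\gamma}_n$ with $\xi_{\tilde{x}_n}\neq 0$ and $\tilde{f}_{\vartheta_b}(\tilde{y}_n)\geq c$. The energy identity together with the bound $|A(\gamma_n)|\leq \tfrac{1}{2} E(\gamma_n)$ are unchanged, since they only used the construction of $\vartheta$ via Proposition \ref{proposition perturbation for the whole section of polytope} and the choice of scaling $\varepsilon$, both of which are indifferent to whether $b$ lies in $\mathcal{B}$ or in $\mathcal{A}\setminus\mathcal{B}$. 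The estimate forces $c\leq \tilde{f}_{\vartheta_b}(\tilde{x}_n)$ for all $n$; since $\xi\in \mathrm{CN}_\bullet(\vartheta_a,\mathcal{A}|_\mathcal{B})$ does satisfy the $b$-finiteness at the chosen $b\in\mathcal{B}$, only finitely many $\tilde{x}_n$ can occur, and the $C^\infty_\mathrm{loc}$-compactness of the $0$-dimensional moduli space yields the usual contradiction. Ray-invariance, the Morse-Smale property and $\Lambda_{\mathcal{A}|_\mathcal{B}}$-linearity of $\partial_{\vartheta_a}$ are then inherited directly from Theorem \ref{theorem perturbed section}.

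For the independence claim I would run the continuation argument of Theorem \ref{theorem independence of chain complexes} between two valid choices $\vartheta^0,\vartheta^1$, again restricting the finiteness conditions to classes $b\in\mathcal{B}$: the three energy estimates on the intervals $\{s\leq 0\}$, $\{s\geq 1\}$ and $\{s\in[0,1]\}$ produced in that proof are performed one class at a time, so they furnish a well-defined Novikov-linear chain map $\mathrm{CN}_\bullet(\vartheta^0_a,\mathcal{A}|_\mathcal{B})\to \mathrm{CN}_\bullet(\vartheta^1_a,\mathcal{A}|_\mathcal{B})$ whose inverse up to chain homotopy is constructed from the standard two-parameter moduli space, exactly as in the unrestricted setting.

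Finally, for the inclusion map $\iota_\mathcal{B}$: it is literally the set-theoretic inclusion of intersections of completions of $V_\bullet(\vartheta_a,\mathcal{A})$, and since the boundary operator on both sides is the same formal sum of Novikov counts, one has $\iota_\mathcal{B}\circ \partial_{\vartheta_a}=\partial_{\vartheta_a}\circ\iota_\mathcal{B}$ on the nose; Novikov-linearity over $\Lambda_\mathcal{A}$ follows from $\Lambda_\mathcal{A}\subseteq \Lambda_{\mathcal{A}|_\mathcal{B}}$. The only real obstacle is a pedantic bookkeeping task: checking that every energy bound in Theorems \ref{theorem perturbed section} and \ref{theorem independence of chain complexes} really only depends on the single class $b$ picked for the contradiction and on the base class $a$, never on the full polytope $\mathcal{A}$, so that shrinking the admissible $b$'s to $\mathcal{B}$ costs nothing.
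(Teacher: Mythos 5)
Your proposal is correct and follows essentially the same approach as the paper: the paper's own proof is extremely terse (it simply declares the first part ``literally the same'' as Theorems \ref{theorem perturbed section} and \ref{theorem independence of chain complexes}, and for the inclusion observes that both boundary operators coincide on the smaller complex), and your write-up is the intended expansion of that claim. You correctly identify the key point that makes the ``verbatim'' assertion valid: in the contradiction argument of the Claim inside Theorem \ref{theorem perturbed section}, the energy bound $|A(\gamma_n)|\leq \tfrac12 E(\gamma_n)$ is derived for each class $b$ separately (using only the construction of $\vartheta$ and the scaling $\varepsilon$), and the finiteness of the $\tilde{x}_n$'s is then extracted from $\xi$'s finiteness at that \emph{same} $b$ — so restricting the quantifier from $b\in\mathcal{A}$ to $b\in\mathcal{B}$ never invokes finiteness outside $\mathcal{B}$. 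The same one-class-at-a-time structure carries over to the continuation estimates of Theorem \ref{theorem independence of chain complexes}. Your treatment of $\iota_\mathcal{B}$ (boundary is the same formal Novikov count on both sides, $\Lambda_\mathcal{A}$-linearity via $\Lambda_\mathcal{A}\subseteq\Lambda_{\mathcal{A}|_\mathcal{B}}$) matches the paper's one-line justification.
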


\begin{proof}
The proof of the first part is literally the same as in Theorem \ref{theorem perturbed section} and Theorem \ref{theorem independence of chain complexes}. To see that the inclusion defines a chain map it suffices to observe that both boundary operators in \eqref{equation chain inclusions of restrictions} are identical upon restricting to the smaller complex $\mathrm{CN}_\bullet(\vartheta_a,\mathcal{A})$.

\end{proof}

In the preceding subsection we also defined a polytope chain complex variant using perturbed sections with respect to non-exact reference pairs (cf. Corollary \ref{corollary other chain complex perturbed}). While this variant requires the underlying section to satisfy some a priori smallness conditions, it does agree with the polytope chain complex variant of Theorem \ref{theorem perturbed section}.

\begin{theorem}\label{theorem indep of perturb with exact vs non-exact ref. pair}
Let $(\rho,g)$ and $\theta \colon \mathcal{A} \to \Omega^1(M)$ be as in Corollary \ref{corollary other chain complex perturbed}, and denote by $\vartheta^\rho=\vartheta(\theta,\rho,g)$ the corresponding perturbed section. Let $\vartheta^H=\vartheta(\theta,H,g_H,\varepsilon)$ be any perturbed section as in Theorem \ref{theorem perturbed section}. Then 
\begin{equation}\label{equation chain homotopy equiv between non-exact and exact ref pairs}
\mathrm{CN}_\bullet(\vartheta^\rho_a,\mathcal{A}) \simeq \mathrm{CN}_\bullet(\vartheta^H_a,\mathcal{A}), \quad \forall a \in \mathcal{A}.
\end{equation}
Let $\theta_i \colon \mathcal{A} \to \Omega^1(M)$ be any other two sections $i=0,1$ with reference pairs $(\rho_i,g_i)$ satisfying the conditions of Corollary \ref{corollary other chain complex perturbed}. Then for any two choices $\vartheta^{\rho_i}=\vartheta(\theta_i,\rho_i,g_i)$ one has
\begin{equation}\label{equation chain homotopy equiv between non-exact and non-exact ref pairs}
\mathrm{CN}_\bullet(\vartheta^{\rho_0}_a,\mathcal{A}) \simeq \mathrm{CN}_\bullet(\vartheta^{\rho_1}_a,\mathcal{A}), \quad \forall a \in \mathcal{A}.
\end{equation}
Moreover, both \eqref{equation chain homotopy equiv between non-exact and exact ref pairs} and \eqref{equation chain homotopy equiv between non-exact and non-exact ref pairs} continue to hold in the restricted case $\mathcal{B} \subseteq \mathcal{A}$.
\end{theorem}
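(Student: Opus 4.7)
The plan is to adapt the proof of Theorem \ref{theorem independence of chain complexes} more or less verbatim; the only conceptual novelty is that the reference pairs on the two sides of the linear interpolation need no longer be exact, so the energy bound must be split according to the two possibly different zero sets. To prove \eqref{equation chain homotopy equiv between non-exact and exact ref pairs} I would fix $a \in \mathcal{A}$, pick metrics $g^\rho, g^H$ making $(\vartheta^\rho_a,g^\rho)$ and $(\vartheta^H_a,g^H)$ Morse--Smale, choose a monotone cutoff $h \in C^\infty([0,1],\R)$ as in \eqref{equation smooth monotone function h}, and interpolate by
\begin{equation*}
\vartheta^s = (1-h(s))\,\vartheta^\rho + h(s)\,\vartheta^H,
\end{equation*}
together with a metric homotopy $g_s$ connecting $g^\rho$ to $g^H$; by Remark \ref{remark homotopy regular no problemo} we may assume $(\vartheta^s_a, g_s)$ is regular. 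The natural candidate is the continuation map
\begin{equation*}
\Psi \colon \mathrm{CN}_\bullet(\vartheta^\rho_a, \mathcal{A}) \longrightarrow \mathrm{CN}_\bullet(\vartheta^H_a, \mathcal{A})
\end{equation*}
defined in analogy with \eqref{equation chain continuation map tau s}. As in the proofs of Theorems \ref{theorem perturbed section} and \ref{theorem independence of chain complexes}, well-definedness of $\Psi$ reduces to controlling the drift term $\int_{\gamma_n}(\vartheta^s_b - \vartheta^s_a)$ by a strict fraction of $E(\gamma_n)$ plus a uniform constant in $b \in \mathcal{A}$ and $n \in \N$.

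For this bound I would split $\R$ into three regions. On $\R_{\leq 0}$ one has $\vartheta^s \equiv \vartheta^\rho$, and by Corollary \ref{corollary other chain complex perturbed} both $\vartheta^\rho_a$ and $\vartheta^\rho_b$ coincide with $\rho$ on $\bigcup_{x \in Z(\rho)} B_\delta(x)$; defining
\begin{equation*}
\mathcal{S}_n^\rho := \Bigl\{ s \in \R_{\leq 0} \,\Big|\, \Vert \nabla^{g^\rho} \vartheta^\rho_a(\gamma_n(s)) \Vert_{g^\rho} \geq \tfrac{C_\rho}{8} \Bigr\},
\end{equation*}
and analogously $\mathcal{S}_n^H \subset \R_{\geq 1}$ with threshold $C_H/(8\varepsilon)$, the complements contribute nothing to the integral by Lemma \ref{lemma perturbations}. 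The Cauchy--Schwarz / Palais--Smale / Lebesgue-measure chain of inequalities used in Theorems \ref{theorem perturbed section} and \ref{theorem independence of chain complexes} then absorbs the contribution from $\mathcal{S}_n^\rho \cup \mathcal{S}_n^H$ into a prefactor of $E(\gamma_n)$ strictly less than one, using the smallness assumption built into Corollary \ref{corollary other chain complex perturbed} and the scaling choice $\varepsilon$ in Theorem \ref{theorem perturbed section}. The compact piece $s \in [0,1]$ contributes at most $F \cdot E(\gamma_n)^{1/2}$ for a uniform constant $F=F(a)$ obtained from the continuous dependence of $\vartheta^s$ on $(s,b)$ over the compact set $[0,1] \times \mathcal{A}$, and the same dichotomy argument used to conclude the proof of Theorem \ref{theorem independence of chain complexes} then yields the required bound $\max\{10F^2, \tfrac{2}{5} E(\gamma_n)\}$. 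The fact that $\Psi$ is a chain map and a chain homotopy equivalence is then standard; a concatenation-of-homotopies argument (or direct comparison with the constant homotopy at $\vartheta^\rho$) produces the inverse up to chain homotopy.

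Statement \eqref{equation chain homotopy equiv between non-exact and non-exact ref pairs} is proved by the identical interpolation with $\vartheta^H$ replaced by $\vartheta^{\rho_1}$: the argument is in fact cleaner because both ends are directly handled by the Palais--Smale constants $C_{\rho_0}, C_{\rho_1}$ of the reference pairs without any $\varepsilon$-scaling. The restricted case $\mathcal{B}\subseteq \mathcal{A}$ needs no extra work: every estimate above is already uniform in $b \in \mathcal{A}$, so restricting $b$ to $\mathcal{B}$ shows that $\Psi$ descends to a continuation $\mathrm{CN}_\bullet(\vartheta^\rho_a, \mathcal{A}|_\mathcal{B}) \to \mathrm{CN}_\bullet(\vartheta^H_a, \mathcal{A}|_\mathcal{B})$ that is compatible with the inclusions $\iota_\mathcal{B}$ of Corollary \ref{corollary restricted polytopes also give well defined chain complexes}, and the chain homotopy equivalence statement follows from the same homotopy argument. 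The only delicate point I anticipate is ensuring that the uniform constant $F$ on $[0,1]$ does not blow up after the small regularising perturbation of Remark \ref{remark homotopy regular no problemo}: to keep it under control one should first fix the linear homotopy of sections and only afterwards choose the regular metric homotopy $C^\infty$-close to $g_s$, so that $F$ depends continuously on data over the compact set $[0,1]\times\mathcal{A}$.
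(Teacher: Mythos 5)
Your proposal is correct and follows the same continuation/energy-splitting strategy as the paper for \eqref{equation chain homotopy equiv between non-exact and exact ref pairs}: interpolate linearly between $\vartheta^\rho$ and $\vartheta^H$, split $\R$ into the two tails and the compact piece $[0,1]$, use the Palais--Smale sets $\mathcal{S}^\rho_n$, $\mathcal{S}^H_n$ and Lemma \ref{lemma perturbations} to kill the complement, bound the tails by fractions of $E(\gamma_n)$ via the smallness built into Corollary \ref{corollary other chain complex perturbed} and the scaling $\varepsilon$, and absorb $[0,1]$ into a uniform constant $F$ via compactness, finishing with the dichotomy from Theorem \ref{theorem independence of chain complexes}.

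The one place where you diverge from the paper is \eqref{equation chain homotopy equiv between non-exact and non-exact ref pairs}: you run the identical direct interpolation between $\vartheta^{\rho_0}$ and $\vartheta^{\rho_1}$, whereas the paper obtains it by transitivity, applying \eqref{equation chain homotopy equiv between non-exact and exact ref pairs} twice through an intermediate exact reference pair $\vartheta^H$. Both routes work: your direct argument is slightly cleaner, as you note, since the two tail thresholds are just $C_{\rho_0}/8$ and $C_{\rho_1}/8$ with no $\varepsilon$-rescaling on either end; the paper's transitivity argument is shorter on paper since no new estimates are needed. Neither buys anything substantial over the other. Your closing remark about fixing the linear homotopy of sections before choosing the $C^\infty$-close regular metric homotopy correctly resolves the potential issue with $F$ and is consistent with Remark \ref{remark homotopy regular no problemo}.
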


\begin{proof}
The proof idea is again arguing via continuations as in Theorem \ref{theorem independence of chain complexes} above -- we will use the latter as carbon copy and adapt the same notation. Define $$\vartheta^s=(1-h(s)) \cdot \vartheta^\rho+h(s) \cdot \vartheta^H.$$ By the same logic as in Theorem \ref{theorem independence of chain complexes}, it suffices to control the expression 
\begin{equation}\label{equation TERM TO CONTROL}
\int_{\gamma} \vartheta^s_b-\vartheta^s_a,
\end{equation}
for all $b \in \mathcal{A}$, in order to get the desired continuation chain map to conclude \eqref{equation chain homotopy equiv between non-exact and exact ref pairs}. For this purpose we define 
\begin{align*}
\mathcal{S}^\rho &:=\ \left\lbrace s \in (-\infty,0) \, \big | \,\Vert \nabla^{g_0} \vartheta^{\rho}_a (\gamma(s)) \Vert_{g_0} \geq \frac{C_\rho}{8} \right\rbrace, \\ \mathcal{S}^H&:=\ \left\lbrace s \in (1,+\infty) \, \big | \,\Vert \nabla^{g_1} \vartheta^H a (\gamma(s)) \Vert_{g_1} \geq \frac{C_H}{\varepsilon \cdot 8} \right\rbrace.
\end{align*}
Here $g_0$ and $g_1$ (abusively) denote Riemannian metrics $g_{\vartheta^\rho_a}$ and $g_{\vartheta^H_a}$ coming from Proposition \ref{proposition perturbation for the whole section of polytope}.

Observe that by assumption and choice of $(\vartheta^\rho_a,g_0)$ we have
$$\Vert \vartheta^{\rho}_b- \rho \Vert_{g_0} \leq 5 \cdot D_0 \cdot \Vert \theta_b- \rho \Vert_g \leq \frac{5 \cdot D_0 \cdot C_\rho}{D_0 \cdot 1000},$$ see proof of Corollary \ref{corollary other chain complex perturbed} and Proposition \ref{proposition perturbation for the whole section of polytope}.

Just as in the proof of Theorem \ref{theorem independence of chain complexes} there is no contribution to \eqref{equation TERM TO CONTROL} for $s$ in the complement of $[0,1] \cup \mathcal{S}^{\rho} \cup \mathcal{S}^H$. At the same time, we can again bound 

$$\bigg \vert \int_{\mathcal{S}^\alpha} \left(\vartheta^s_b-\vartheta^s_a \right) \, \dot{\gamma}(s) \, ds \bigg \vert < \frac{1}{10} \cdot E(\gamma), \, \bigg \vert \int_{\mathcal{S}^\alpha} \left(\vartheta^s_b-\vartheta^s_a \right) \, \dot{\gamma}(s) \, ds \bigg \vert < \frac{1}{10} \cdot E(\gamma),$$
and
$$\bigg \vert \int_{[0,1]} \left(\vartheta^s_b-\vartheta^s_a \right) \, \dot{\gamma}(s) \, ds \bigg \vert \leq F \cdot E(\gamma)^{\frac{1}{2}}.$$ This suffices to obtain the desired control over \eqref{equation TERM TO CONTROL} and proves \eqref{equation chain homotopy equiv between non-exact and exact ref pairs}, see proof of Theorem \ref{theorem independence of chain complexes} for more details. Last but not least, \eqref{equation chain homotopy equiv between non-exact and non-exact ref pairs} follows by applying \eqref{equation chain homotopy equiv between non-exact and exact ref pairs} twice:
$$\mathrm{CN}_\bullet(\vartheta^{\rho_0}_a,\mathcal{A}) \simeq \mathrm{CN}_\bullet(\vartheta^H_a,\mathcal{A}) \simeq \mathrm{CN}_\bullet(\vartheta^{\rho_1}_a,\mathcal{A}), \quad \forall a \in \mathcal{A}.$$

\end{proof}

Theorem \ref{theorem independence of chain complexes} and Theorem \ref{theorem indep of perturb with exact vs non-exact ref. pair} readily imply:

\begin{corollary}\label{corollary independence of data}
Let $\theta_i \colon \mathcal{A} \to \Omega^1(M)$ with $i=0,1$ be two sections and $\vartheta^i$ associated perturbations as in Theorem \ref{theorem perturbed section} (or Corollary \ref{corollary other chain complex perturbed}). Then the resulting polytope Novikov homologies are isomorphic:
$$\mathrm{HN}_\bullet(\vartheta^0_a,\mathcal{A}) \cong \mathrm{HN}_\bullet(\vartheta^1_a,\mathcal{A}), \quad \forall a \in \mathcal{A}.$$
\end{corollary}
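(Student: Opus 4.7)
The statement is essentially a direct corollary of the two preceding theorems, so my plan is to simply assemble the pieces. The general principle is that chain homotopy equivalent complexes have isomorphic homology, so it suffices to produce a chain homotopy equivalence between $\mathrm{CN}_\bullet(\vartheta^0_a,\mathcal{A})$ and $\mathrm{CN}_\bullet(\vartheta^1_a,\mathcal{A})$ in each of the possible combinations of perturbation types.

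The plan is to distinguish three cases. First, if both $\vartheta^0$ and $\vartheta^1$ come from Theorem \ref{theorem perturbed section}, i.e.\ are of the form $\vartheta^i=\vartheta(\theta_i,H_i,g_i,\varepsilon_i)$ with exact reference pairs, then Theorem \ref{theorem independence of chain complexes} directly provides a chain homotopy equivalence $\mathrm{CN}_\bullet(\vartheta^0_a,\mathcal{A}) \simeq \mathrm{CN}_\bullet(\vartheta^1_a,\mathcal{A})$ for every $a \in \mathcal{A}$. Second, if both perturbations come from Corollary \ref{corollary other chain complex perturbed} (with non-exact reference pairs $(\rho_i,g_i)$ satisfying the required smallness condition on $\theta_i$), then the chain homotopy equivalence is precisely the content of \eqref{equation chain homotopy equiv between non-exact and non-exact ref pairs} in Theorem \ref{theorem indep of perturb with exact vs non-exact ref. pair}. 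Third, if one of the two perturbations comes from Theorem \ref{theorem perturbed section} and the other from Corollary \ref{corollary other chain complex perturbed}, then \eqref{equation chain homotopy equiv between non-exact and exact ref pairs} in Theorem \ref{theorem indep of perturb with exact vs non-exact ref. pair} gives the desired equivalence.

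Passing to homology then yields $\mathrm{HN}_\bullet(\vartheta^0_a,\mathcal{A}) \cong \mathrm{HN}_\bullet(\vartheta^1_a,\mathcal{A})$ in all three cases. Since the chain homotopy equivalences at the chain level are Novikov-linear (as already noted in the remark preceding Theorem \ref{theorem independence of chain complexes}), the induced isomorphism on homology is an isomorphism of $\Lambda_\mathcal{A}$-modules, which is the strongest form of the statement one could hope for.

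There is really no obstacle here: the two previous theorems were set up precisely so that this corollary would follow formally. The only mild bookkeeping point is to make sure the case analysis covers all four possible combinations of perturbation sources (exact/exact, exact/non-exact, non-exact/exact, non-exact/non-exact), which reduces to three by symmetry and is fully handled by the two cited theorems.
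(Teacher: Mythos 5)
Your proposal is correct and is exactly the paper's intended argument: the paper gives no proof of this corollary beyond asserting that Theorem \ref{theorem independence of chain complexes} and Theorem \ref{theorem indep of perturb with exact vs non-exact ref. pair} ``readily imply'' it, and your case analysis over the possible sources of $\vartheta^0$ and $\vartheta^1$ is precisely what that phrase is compressing. One small caveat: in your mixed case with $\theta_0\neq\theta_1$, \eqref{equation chain homotopy equiv between non-exact and exact ref pairs} as stated compares perturbations built from a \emph{single} underlying section, so strictly you should compose it with the chain homotopy equivalence of Theorem \ref{theorem independence of chain complexes} to change sections on the exact side -- but this is exactly the chaining the paper itself performs to deduce \eqref{equation chain homotopy equiv between non-exact and non-exact ref pairs}, so there is no genuine gap.
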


Corollary \ref{corollary independence of data} is the analogue to the independence of Morse-Smale pairs $(\alpha,g)$ in the case of ordinary Novikov homology. The latter can also be recovered from the former by taking the trivial polytope $\mathcal{A}=\langle a \rangle$. Nevertheless, keeping track of the section $\theta$, or rather its perturbations, will prove useful, especially when establishing the commutative diagram in the Main Theorem \ref{theorem main technical theorem}.

\subsection{Non-exact deformations and proof of the Main Theorem}\label{subsection Non-exact deformatnios}

The power of the polytope machinery will become evident in this subsection -- roughly speaking, the notion of polytopes allows us to compare the Novikov homologies coming from two different cohomology classes, see Main Theorem \ref{theorem main technical theorem}. In Section \ref{section Applications of the Main Theorem} we present some applications of the Main Theorem \ref{theorem main technical theorem}.

\begin{theorem}[Main Theorem]\label{theorem main technical theorem}
Let $\mathcal{A}\subset H^1_{\mathrm{dR}}(M)$ be a polytope and $\theta \colon \mathcal{A} \to \Omega^1(M)$ a section.
Then there exists a perturbation $\vartheta \colon \mathcal{A} \to \Omega^1(M)$ of $\theta$ such that for every subpolytope $\mathcal{B} \subseteq \mathcal{A}$ and any two cohomology classes $a, \, b \in \mathcal{A}$ there exists a commutative diagram

\begin{center}
\begin{tikzcd}
\mathrm{CN}_\bullet(\vartheta_a,\mathcal{A}) \arrow[d, hook, "\iota_\mathcal{B}"] \arrow[r, "\simeq"] & \mathrm{CN}_\bullet(\vartheta_b,\mathcal{A}) \arrow[d, hook, "\iota_\mathcal{B}"] \\
\mathrm{CN}_\bullet \left(\vartheta_a,\mathcal{A}  |_{\mathcal{B}} \right) \arrow[r,"\simeq"] & \mathrm{CN}_\bullet \left( \vartheta_b,\mathcal{A}  |_{\mathcal{B}} \right).
\end{tikzcd}
\end{center}
where the horizontal maps are Novikov-linear chain homotopy equivalences. In particular
\begin{center}
\begin{tikzcd}
\mathrm{HN}_\bullet(\vartheta_a,\mathcal{A}) \arrow[d, hook] \arrow[r, "\cong"] & \mathrm{HN}_\bullet(\vartheta_b,\mathcal{A}) \arrow[d, hook] \\
\mathrm{HN}_\bullet \left(\vartheta_a,\mathcal{A}  |_{\mathcal{B}} \right) \arrow[r,"\cong"] & \mathrm{HN}_\bullet \left( \vartheta_b,\mathcal{A}  |_{\mathcal{B}} \right).
\end{tikzcd}
\end{center}
with all the maps being Novikov-linear.
\end{theorem}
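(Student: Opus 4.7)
The strategy is to apply Theorem \ref{theorem perturbed section} to produce $\vartheta$, and then extend the continuation-map construction of Theorem \ref{theorem independence of chain complexes} from the case ``different perturbations of the same class'' to ``different classes inside the same polytope.'' The commutative square then falls out by naturality. Concretely, fix a reference Morse-Smale pair $(H,g)$ and let $\vartheta\colon\mathcal{A}\to\Omega^1(M)$ be the perturbation produced by Theorem \ref{theorem perturbed section}, with metrics $g_{\vartheta_a}$; Corollary \ref{corollary restricted polytopes also give well defined chain complexes} furnishes the restricted complexes $\mathrm{CN}_\bullet(\vartheta_a,\mathcal{A}|_\mathcal{B})$ together with the chain inclusions $\iota_\mathcal{B}$.

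For $a,b\in\mathcal{A}$ I set $\vartheta^s:=(1-h(s))\,\vartheta_a+h(s)\,\vartheta_b$ with $h$ as in \eqref{equation smooth monotone function h}, and pick a regular homotopy of metrics $g_s$ from $g_{\vartheta_a}$ to $g_{\vartheta_b}$ (cf.\ Remark \ref{remark homotopy regular no problemo}). Convexity of $\mathcal{A}$ gives $[\vartheta^s]=(1-h(s))a+h(s)b\in\mathcal{A}$, so $\pi^*\vartheta^s$ admits a smooth family of primitives $\tilde{f}_{\vartheta^s}$ on $\widetilde{M}_\mathcal{A}$. Counting index-$0$ trajectories of $\dot{\tilde{\gamma}}+\nabla^{g_s}\tilde{f}_{\vartheta^s}(\tilde{\gamma})=0$ defines the candidate
$$\Psi^{ba}\colon\mathrm{CN}_\bullet(\vartheta_a,\mathcal{A})\longrightarrow \mathrm{CN}_\bullet(\vartheta_b,\mathcal{A}).$$

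The main obstacle I anticipate is showing that $\Psi^{ba}$ preserves the multi finiteness condition. For $c\in\mathcal{A}$, writing $\vartheta_c=\vartheta^s+(\vartheta_c-\vartheta^s)$ inside $\int_\gamma\vartheta_c$ and using $\dot{\gamma}=-\nabla^{g_s}\vartheta^s$ yields
$$\tilde{f}_{\vartheta_c}(\tilde{x})-\tilde{f}_{\vartheta_c}(\tilde{y})=E(\gamma)-\int_\R(\vartheta_c-\vartheta^s)(\dot{\gamma}(s))\,ds.$$
Theorem \ref{theorem perturbed section} forces all $\vartheta^s$ and all $\vartheta_c$ to agree with $dH$ on a fixed neighbourhood $N\supseteq\mathrm{Crit}(H)$, so the integrand vanishes off a subset $\mathcal{S}^a\cup\mathcal{S}^b\cup[0,1]\subset\R$ on which $\gamma$ leaves $N$. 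On $\mathcal{S}^a$ and $\mathcal{S}^b$ the Palais-Smale/Cauchy-Schwarz argument used in Theorem \ref{theorem independence of chain complexes} bounds each contribution by $\tfrac{1}{10}E(\gamma)$, and on the compact interval $[0,1]$ continuity of $\vartheta$ together with compactness of $\mathcal{A}$ yields a uniform bound $F\cdot E(\gamma)^{1/2}$ with $F$ independent of $c$. The same case distinction as in Theorem \ref{theorem independence of chain complexes} then produces $\tilde{f}_{\vartheta_c}(\tilde{y})\leq\tilde{f}_{\vartheta_c}(\tilde{x})+\mathrm{const}$ uniformly in $c\in\mathcal{A}$, which is precisely the multi finiteness condition.

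Standard Novikov-Morse arguments then promote $\Psi^{ba}$ to a chain map, Novikov-linear by deck equivariance of the flow. A symmetric construction produces $\Psi^{ab}$, and a two-parameter family of continuations delivers chain homotopies $\Psi^{ab}\Psi^{ba}\simeq\mathrm{id}\simeq\Psi^{ba}\Psi^{ab}$, both of whose well-definedness is handled by the same energy estimate. For the restriction $\mathcal{A}|_\mathcal{B}$ the identical formula defines the lower horizontal arrow --- the finiteness check is now strictly weaker, since $c$ only ranges over $\mathcal{B}\subseteq\mathcal{A}$ --- and commutativity of the square is tautological since the upper and lower $\Psi^{ba}$'s act by the same count on chain generators while $\iota_\mathcal{B}$ is merely the inclusion of completions. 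Passing to homology yields the second diagram.
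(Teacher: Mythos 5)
Your proposal is correct and follows essentially the same route as the paper: construct $\vartheta$ via Theorem \ref{theorem perturbed section}, run a continuation along $\vartheta^s=(1-h(s))\vartheta_a+h(s)\vartheta_b$, control $\int_\gamma(\vartheta_c-\vartheta^s)$ by using that all perturbed forms coincide near $\mathrm{Crit}(H)$ together with the Palais--Smale/Cauchy--Schwarz estimate and the compactness of $[0,1]\times\mathcal{A}$, and then observe that the same count defines the restricted continuation so the square with $\iota_\mathcal{B}$ commutes tautologically. The paper's own proof emphasizes the one genuinely new point relative to Theorem \ref{theorem independence of chain complexes} --- that $\vartheta_c$ can be taken $s$-independent since the endpoints of $\vartheta^s$ share the zeros $\mathrm{Crit}(H)$ --- and you capture exactly this when you note that all $\vartheta^s$ and $\vartheta_c$ agree with the (scaled) $dH$ on the fixed neighbourhood $N$, so the integrand vanishes there.
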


\begin{proof}
Most of the ideas have already been established in the previous subsection, especially in the proof of Theorem \ref{theorem perturbed section} and Theorem \ref{theorem independence of chain complexes}. Fix a reference Morse-Smale pair $(H,g)$ and pick a perturbation $\vartheta=\vartheta(\theta,H,g,\varepsilon)$ as in Theorem \ref{theorem perturbed section}. Let $h \colon [0,1] \to \R$ be a smooth function as in \eqref{equation smooth monotone function h} and define a homotopy
$$\vartheta^s_{ab}:=(1-h(s)) \cdot \vartheta_a+h(s) \cdot \vartheta_b, \quad \forall s \in \R.$$ Pick $g_s$ a smooth homotopy connecting the two metrics $g_{\vartheta_a}$ and $g_{\vartheta_b}$ and assume that $(\vartheta^s_{ab},g_s)$ is regular (see Remark \ref{remark homotopy regular no problemo}). The idea now is to show that the chain continuation map $$\Psi^{ba} \colon \mathrm{CN}_\bullet(\vartheta_a,g_{\vartheta_a},\mathcal{A}) \longrightarrow \mathrm{CN}_\bullet(\vartheta_b,g_{\vartheta_b},\mathcal{A})$$ associated to the regular homotopy $(\vartheta^s_{ab},g_s)$ is well defined and makes the desired diagram commute. The argument that $\Psi^{ba}$ is a well defined Novikov chain map is the same as in Theorem \ref{theorem independence of chain complexes} and follows by controlling terms of the form 
$$\int_{\gamma} \vartheta_c-\vartheta^s_{ab}, \text{ with } c \in \mathcal{A}.$$
Note that this time around we do not need to put an $s$-dependence on $\vartheta_c$ (this corresponds to $\vartheta_b$ in the proof of Theorem \ref{theorem independence of chain complexes}), since the endpoints of $\vartheta^s_{ab}$ have the same zeros as $\vartheta_c$, namely $Z(\vartheta_c)=\mathrm{Crit}(H)$. The $s$-dependence on $\vartheta_c$ is the only bit that used the cohomologous assumption in Theorem \ref{theorem independence of chain complexes}, and indeed, the remaining part of the proof is verbatim the same and is thus omitted.

We use the very same homotopy to define a chain continuation $$\Psi^{ba} |_{\mathcal{B}} \colon \mathrm{CN}_\bullet \left( \vartheta_a,\mathcal{A}  |_{\mathcal{B}}\right) \longrightarrow \mathrm{CN}_\bullet \left(\vartheta_b,\mathcal{A}  |_{\mathcal{B}} \right).$$ From this we obtain the following commutative diagram on the chain level:

\begin{center}
\begin{tikzcd}
\mathrm{CN}_\bullet(\vartheta_a,\mathcal{A}) \arrow[d, hook, "\iota_{\mathcal{B}}"] \arrow[r, "\Psi^{ba}"] & \mathrm{CN}_\bullet(\vartheta_b,\mathcal{A}) \arrow[d, hook, "\iota_{\mathcal{B}}"] \\
\mathrm{CN}_\bullet \left(\vartheta_a,\mathcal{A}  |_{\mathcal{B}} \right) \arrow[r,"\Psi^{ba} |_{\mathcal{B}}"] & \mathrm{CN}_\bullet \left( \vartheta_b,\mathcal{A}  |_{\mathcal{B}} \right).
\end{tikzcd}
\end{center}
Here the $\iota_\mathcal{B}$ denote the inclusions \eqref{equation chain inclusions of restrictions}, which are chain maps (cf. Corollary \ref{corollary restricted polytopes also give well defined chain complexes}). By symmetry and the standard argument, we get continuations $\Psi^{ab}$ and $\Psi^{ab} |_{\mathcal{B}}$ in the opposite direction by reversing the underlying regular homotopy. It is also easy to see that continuation maps are linear over the underlying Novikov ring. This proves that the two horizontal chain maps above define the desired chain homotopy equivalences. In particular, the chain diagram above induces the desired diagram in homology and thus concludes the proof.

\end{proof}

\begin{remark}
In light of Theorem \ref{theorem independence of chain complexes}, Theorem \ref{theorem indep of perturb with exact vs non-exact ref. pair} and Corollary \ref{corollary independence of data} one can upgrade Theorem \ref{theorem main technical theorem} and use different sections on the left and on the right of the the diagram, i.e.
\begin{center}
\begin{tikzcd}
\mathrm{CN}_\bullet(\vartheta^0_a,\mathcal{A}) \arrow[d, hook, "\iota_{\mathcal{B}}"] \arrow[r, "\simeq"] & \mathrm{CN}_\bullet(\vartheta^1_b,\mathcal{A}) \arrow[d, hook, "\iota_{\mathcal{B}}"] \\
\mathrm{CN}_\bullet \left(\vartheta^0_a,\mathcal{A}  |_{\mathcal{B}} \right) \arrow[r,"\simeq"] & \mathrm{CN}_\bullet \left( \vartheta^1_b,\mathcal{A}  |_{\mathcal{B}} \right).
\end{tikzcd}
\end{center}
The upper and lower chain homotopy equivalences however, do come from compositions of chain continuations rather than genuine chain continuations.
\end{remark}

\subsection{Twisted Novikov complex}\label{subsection The twisted Novikov complex}

Throughout this subsection we shall assume that $\theta \colon \mathcal{A} \to \Omega^1(M)$ has already been perturbed as in Theorem \ref{theorem perturbed section}.\footnote{Strictly speaking, we could also use perturbations coming from Corollary \ref{corollary other chain complex perturbed} at the expanse of working with small sections. For the sake of exposition we refrain from stating this explicitily.} We present an alternative description of $\mathrm{CN}_\bullet(\theta_a,\mathcal{A})$ by means of local coefficients. For an extensive treatment of local coefficients we recommend \cite{Whitehead1978} and \cite[Chapter 2]{banyaga2019} in the case of Morse homology.

\begin{definition}\label{definition Novikov ring via formal variable t}
Let $G \subseteq \R$ be an additive subgroup. Then define 
\begin{equation*}
\mathrm{Nov}(G;\Z):=\mathrm{Nov}(G)
\end{equation*}
as the ring\footnote{The addition and multiplication of $\mathrm{Nov}(G)$ are the obvious ones: $n_g \, t^g+m_g \,t^g=(n_g+m_g) \, t^g$ and $(n_g \,t^g) \cdot (n_h \, t^h):=n_g \cdot n_h \, t^{g+h}$.} consisting of formal sums $\sum_{g \in G} n_g t^g$ with $n_g \in \Z$, satisfying the finiteness condition
\begin{equation*}
\forall c \in \R  \colon \; \left\{ g \, \big | \, n_g \neq 0, \, g< c \right\} \text{ is finite.}
\end{equation*}
\end{definition}
Whenever $G$ is the image of a period homomorphism $\Phi_a \colon \pi_1(M) \to \R$ we write 
\begin{equation*}
\mathrm{Nov}(a):=\mathrm{Nov}(G), \quad G=\mathrm{im}(\Phi_a).
\end{equation*} 
It turns out that $\mathrm{Nov}(a)$ is isomorphic to $\Lambda_a$, where the isomorphism is given by sending a deck transformation $A \in \Gamma_a$ to $t^{\Phi_a(A)}$ -- both finiteness conditions match and we obtain:
\begin{proposition}\label{proposition novikov rings match}
For any cohomology class $a \in H^1_{\mathrm{dR}}(M)$ we have
\begin{equation*}
\Lambda_a \cong \mathrm{Nov}(a),
\end{equation*}
as rings.
\end{proposition}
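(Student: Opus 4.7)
The plan is to construct an explicit ring isomorphism $\Psi \colon \Lambda_a \to \mathrm{Nov}(a)$ induced by the period homomorphism, and then check that (i) the finiteness conditions match up, (ii) the map is a ring homomorphism, and (iii) it is a bijection. The hint is already given in the paragraph preceding the statement: send a deck transformation $A \in \Gamma_a$ to the monomial $t^{\Phi_a(A)}$.

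First I would observe that by the very definition of $\Gamma_a = \pi_1(M)/\ker(a)$, the period homomorphism $\Phi_a \colon \pi_1(M) \to \R$ descends to an \emph{injective} group homomorphism $\bar\Phi_a \colon \Gamma_a \hookrightarrow \R$ whose image is precisely $G := \mathrm{im}(\Phi_a)$. Hence $\bar\Phi_a$ is a group isomorphism between $\Gamma_a$ and $G$. Using this, I would first define a ring homomorphism on the uncompleted group ring
\begin{equation*}
\Psi \colon \Z[\Gamma_a] \longrightarrow \mathrm{Nov}(G), \qquad \sum_{A \in \Gamma_a} \lambda_A \, A \; \longmapsto \; \sum_{A \in \Gamma_a} \lambda_A \, t^{\bar\Phi_a(A)}.
\end{equation*}
That $\Psi$ respects multiplication is immediate from $t^{\bar\Phi_a(A)} \cdot t^{\bar\Phi_a(B)} = t^{\bar\Phi_a(A) + \bar\Phi_a(B)} = t^{\bar\Phi_a(AB)}$, i.e.\ from the fact that $\bar\Phi_a$ is a group homomorphism. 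Injectivity of $\Psi$ follows from injectivity of $\bar\Phi_a$, and surjectivity onto the $\Z$-span of $\{t^g : g \in G\}$ is clear since every $g \in G$ is hit by $\bar\Phi_a$.

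Next I would extend $\Psi$ to the completions. The key point is that the two finiteness conditions are the \emph{same} condition translated via $\bar\Phi_a$: an element $\lambda = \sum_A \lambda_A A$ of $\Z[\Gamma_a]$ lies in the upward completion $\Lambda_a$ exactly when, for each $c \in \R$, only finitely many $A$ with $\lambda_A \neq 0$ satisfy $\Phi_a(A) < c$; applying $\Psi$ term by term yields $\sum_A \lambda_A t^{\bar\Phi_a(A)}$, and since $\bar\Phi_a$ is injective, the coefficients do not collapse, so this formal sum lies in $\mathrm{Nov}(G)$ precisely because the same finiteness condition holds with $g = \bar\Phi_a(A)$ in place of $\Phi_a(A)$. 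Conversely, every $\sum_g n_g t^g \in \mathrm{Nov}(G)$ is the image of $\sum_g n_g \bar\Phi_a^{-1}(g) \in \Lambda_a$. Hence $\Psi$ extends to a $\Z$-linear bijection $\Lambda_a \to \mathrm{Nov}(a)$.

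Finally I would verify that the extended $\Psi$ remains multiplicative. The product on both sides is a convolution, and the convergence of the convolution (in the sense of the finiteness condition) is guaranteed on each side by the respective completion condition; since $\bar\Phi_a$ turns one finiteness condition into the other, the two convolutions correspond term-wise. This gives a ring isomorphism $\Lambda_a \cong \mathrm{Nov}(a)$, which is exactly the claim. I do not foresee a real obstacle: the statement is essentially a tautology once one notices that $\bar\Phi_a$ is an isomorphism onto its image, and the only minor piece of bookkeeping is checking that the convolution product is well defined on $\mathrm{Nov}(G)$, which is routine.
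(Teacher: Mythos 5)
Your proposal is correct and follows exactly the route the paper indicates in the sentence preceding the proposition: the map $A \mapsto t^{\Phi_a(A)}$ is the isomorphism, and it works because $\Gamma_a = \pi_1(M)/\ker(\Phi_a)$ makes the induced $\bar\Phi_a$ injective so the two finiteness conditions translate into each other verbatim. The paper leaves this as a one-line remark rather than a formal proof; you have simply filled in the routine verifications (multiplicativity of the monomial map, bijectivity, compatibility of the convolution products on the completions), which is exactly what the paper expects the reader to supply.
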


In view of Proposition \ref{proposition novikov rings match} we will also refer to $\mathrm{Nov}(a)$ as Novikov ring of $a$. Inspired by the definition of $\mathrm{Nov}(a)$ we will now define yet another ring $\mathrm{Nov}(\mathcal{A})$, which will be isomorphic to $\Lambda_\mathcal{A}$ almost by definition.

\begin{definition}
Let $\mathcal{A}:=\langle a_0, \dots, a_k \rangle$ be a polytope. Define
\begin{equation*}
\mathrm{Nov}(\mathcal{A};\Z):=\mathrm{Nov}(\mathcal{A}),
\end{equation*}
as the ring consisting of elements 
\begin{equation}\label{equation formal sums new novikov ring}
\sum_{A \in \Gamma_\mathcal{A}} n_A \, t_0^{\Phi_{a_0}(A)} \cdots \, t_k^{\Phi_{a_k}(A)}, \quad n_A \in \Z,
\end{equation}
with a multi finiteness condition
\begin{equation*}
\forall l=0,\dots,k, \, \forall c \in \R  \colon \; \left\{ A \, \big | \, n_A \neq 0, \, \Phi_{a_l}(A)< c \right\} \text{ is finite.}
\end{equation*}
\end{definition}

Similarly, for any subpolytope $\mathcal{B}\subseteq \mathcal{A}$ we define a (potentially) larger group
\begin{equation*}
\mathrm{Nov}\left(\mathcal{A}  |_{\mathcal{B}}\right) \supseteq \mathrm{Nov}(\mathcal{A})
\end{equation*}
 consisting of the same formal sums \eqref{equation formal sums new novikov ring}, but with a (potentially) less restrictive multi finiteness condition:
\begin{equation*}
\text{For all } l \text{ such that } a_l \in \mathcal{B} \text{ and } c \in \R \colon \; \left\{A \, \big | \, n_A \neq 0, \, \Phi_{a_l}(A) < c \right\} \text{ is finite.}
\end{equation*}
Analogously to Proposition \ref{proposition novikov rings match} we have:\footnote{Note that $A=B$ in $\Gamma_{\mathcal{A}}$ if and only if $\Phi_{a_l}(A)=\Phi_{a_l}(B)$ for all $l=0,\dots,k$.}

\begin{proposition}\label{proposition novikov ring GENERAL agree}
For any polytope $\mathcal{A}$ we have
\begin{equation*}
\Lambda_\mathcal{A} \cong \mathrm{Nov}(\mathcal{A})
\end{equation*}
as rings. Similarly we obtain
\begin{equation*}
\Lambda_{\mathcal{A} |_{\mathcal{B}}} \cong \mathrm{Nov}(\mathcal{A} |_{\mathcal{B}})
\end{equation*}
as well.
\end{proposition}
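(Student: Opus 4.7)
The plan is to construct an explicit ring isomorphism and then use the vertex-reduction principle (the ring analogue of Lemma \ref{lemma vertex determien CN polytope}, already invoked in the paragraph preceding the proposition) to see that the multi finiteness conditions on both sides match up. The natural candidate is the ring homomorphism
\begin{equation*}
\Psi \colon \Z[\Gamma_\mathcal{A}] \longrightarrow \mathrm{Nov}(\mathcal{A}), \qquad A \longmapsto t_0^{\Phi_{a_0}(A)} \cdots t_k^{\Phi_{a_k}(A)},
\end{equation*}
extended $\Z$-linearly. That this is a ring homomorphism follows because each $\Phi_{a_l}$ is itself a group homomorphism, so $\Psi(A\cdot B) = \Psi(A)\Psi(B)$. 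Injectivity on $\Gamma_\mathcal{A}$ is exactly the content of the footnote attached to the proposition: $A = B$ in $\Gamma_\mathcal{A}$ if and only if $\Phi_{a_l}(A) = \Phi_{a_l}(B)$ for all $l$.

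Next I would extend $\Psi$ to the completion. Given $\lambda = \sum_{A} \lambda_A\, A \in \Lambda_\mathcal{A}$, set
\begin{equation*}
\Psi(\lambda) := \sum_{A \in \Gamma_\mathcal{A}} \lambda_A \, t_0^{\Phi_{a_0}(A)}\cdots t_k^{\Phi_{a_k}(A)},
\end{equation*}
and check that this formal sum actually lies in $\mathrm{Nov}(\mathcal{A})$. By the ring analogue of Lemma \ref{lemma vertex determien CN polytope}, membership in $\Lambda_\mathcal{A}$ is equivalent to the upward finiteness condition with respect to each vertex $a_l$, i.e. for all $c \in \R$ the set $\{A \mid \lambda_A \neq 0,\ \Phi_{a_l}(A) < c\}$ is finite for every $l=0,\dots,k$. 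This is literally the multi finiteness condition defining $\mathrm{Nov}(\mathcal{A})$. So $\Psi$ lands in $\mathrm{Nov}(\mathcal{A})$ and, by the same token, is surjective: any formal sum in $\mathrm{Nov}(\mathcal{A})$ lifts uniquely back to a sum $\sum_A \lambda_A\,A$ over $\Gamma_\mathcal{A}$ thanks to the injectivity of $A \mapsto (\Phi_{a_0}(A),\dots,\Phi_{a_k}(A))$. Injectivity of $\Psi$ on the completion is immediate from the same injectivity statement.

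The only subtle bookkeeping is verifying that convolution products in $\Lambda_\mathcal{A}$ are well-defined and compatible with $\Psi$: for $\lambda, \mu \in \Lambda_\mathcal{A}$ one has $(\lambda \cdot \mu)_C = \sum_{AB=C} \lambda_A \mu_B$, and this sum is finite precisely because of the vertex-wise finiteness. On the $\mathrm{Nov}(\mathcal{A})$ side, multiplication of the monomials $t_0^{\Phi_{a_0}(A)}\cdots t_k^{\Phi_{a_k}(A)}$ and $t_0^{\Phi_{a_0}(B)}\cdots t_k^{\Phi_{a_k}(B)}$ produces $t_0^{\Phi_{a_0}(AB)}\cdots t_k^{\Phi_{a_k}(AB)}$, matching $\Psi(AB)$. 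So $\Psi$ is a ring isomorphism $\Lambda_\mathcal{A} \cong \mathrm{Nov}(\mathcal{A})$.

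For the restricted case one repeats verbatim the same argument, except that one intersects only over the vertices $a_{l_j}$ that span $\mathcal{B}$. The finiteness condition upstairs in $\Lambda_{\mathcal{A}|_\mathcal{B}}$ then matches the less restrictive multi finiteness condition defining $\mathrm{Nov}(\mathcal{A}|_\mathcal{B})$. The main ``obstacle" is really only the verification that the vertex-wise finiteness suffices, which is precisely the content of the Novikov-ring analogue of Lemma \ref{lemma vertex determien CN polytope} (already asserted in the text) — once that is granted, the proposition reduces to the essentially tautological observation that $\Psi$ translates the group-ring presentation into the multivariable monomial presentation.
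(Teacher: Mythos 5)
Your proof is correct and follows the same route the paper implies: the paper leaves the argument to the reader, asserting the result ``analogously to Proposition~\ref{proposition novikov rings match}'' (where the defining map $A \mapsto t^{\Phi_a(A)}$ is made explicit) and recording the key injectivity fact $A = B \iff \Phi_{a_l}(A) = \Phi_{a_l}(B) \ \forall l$ only in a footnote. You spell out that the map $A \mapsto t_0^{\Phi_{a_0}(A)}\cdots t_k^{\Phi_{a_k}(A)}$ is a ring homomorphism, that both finiteness conditions coincide via the vertex-reduction principle, and that the monomial indexing recovers $\Gamma_\mathcal{A}$ — all consistent with the paper's intent.
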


Let us mention that these Novikov rings are \emph{commutative} rings since $\Gamma_{\mathcal{A}}$ is abelian. Indeed, the commutator subgroup of $\pi_1(M)$ is contained in every kernel $\ker (a_l)$, in particular $$ABA^{-1}B^{-1} \in \bigcap_{l=0}^k \ker (a_l), \text{ thus  }ABA^{-1}B^{-1}=0 \in \Gamma_{\mathcal{A}}.$$

Each polytope $\mathcal{A}$ comes with a representation
\begin{equation*}
\rho_\mathcal{A} \colon \pi_1(M,x_0) \times \mathrm{Nov}(\mathcal{A}) \to \mathrm{Nov}(\mathcal{A}), \quad \rho_{\mathcal{A}}(\eta,\lambda)=t_0^{-\Phi_{a_0}(\eta)} \cdots \, t_k^{-\Phi_{a_k}(\eta)} \cdot \lambda.
\end{equation*}
Sometimes we will write out $\Phi_{a_l}(\eta)=\int_{\eta} a_l$. The importance of the minus sign will become clear in Definition \ref{definition twisted complexes}.  To any such representation one can associate a \emph{local coefficient system} 
\begin{equation*}
\underline{\mathrm{Nov}}(\mathcal{A}) \colon \Pi_1(M) \to \mathsf{mod}_{\mathrm{Nov}(\mathcal{A})},
\end{equation*}
which is unique up to isomorphisms of local coefficients. We briefly recall the construction:\footnote{This stems from the construction of a category equivalence between the fundamental groupoid and the fundamental group of a sufficiently nice topological space. This procedure allows to switch back and forth between local coefficients and representations, see \cite[Page 17]{May1999}, \cite[Chapter 2]{banyaga2019} and \cite{eilenberg1947} for more details.} fix a basepoint $x_0 \in M$ and pick for every $x \in M$ a homotopy class of paths $\{\eta_x\}$ relative to the endpoints $x_0$ and $x$. Set
\begin{equation*}
\underline{\mathrm{Nov}}(\mathcal{A})(x):=\mathrm{Nov}(\mathcal{A}), \quad \forall x \in M.
\end{equation*} For every homotopy class $\{\gamma\}$ relative to the endpoints $x$ and $y$, we first define a loop
\begin{equation*}
\underline{\gamma}:=\eta_x*\gamma*\eta_y^{-1} \colon S^1 \to M,
\end{equation*}
based at $x_0$, and then define
\begin{equation}\label{equation novikov local coeff twist}
\underline{\mathrm{Nov}}(\mathcal{A})(\gamma) \colon \underline{\mathrm{Nov}}(\mathcal{A})(y) \longrightarrow \underline{\mathrm{Nov}}(\mathcal{A})(x), \quad \underline{\mathrm{Nov}}(\mathcal{A})(\gamma)(\, \cdot \,) :=\rho_{\mathcal{A}} \left(\underline{\gamma}, \, \cdot \, \right).\footnote{Note that $\Gamma(\gamma * \eta)=\Gamma(\gamma) \circ \Gamma(\eta)$.}
\end{equation}
Taking a closer look at \eqref{equation novikov local coeff twist} reveals that the Novikov ring  isomomorphism $\underline{\mathrm{Nov}}(\mathcal{A})(\gamma)$ is given by multiplication with 
$$t_0^{-\int_{\underline{\gamma}} a_0} \cdots \, t_k^{-\int_{\underline{\gamma}}a_k} \in \Z[\Gamma_{\mathcal{A}}] \subseteq  \mathrm{Nov}(\mathcal{A}),$$ hence we may also view it as a  $\Z[\Gamma_{\mathcal{A}}]$-module isomorphism. 

\begin{remark}\label{remark better local coeff}
Usually local coefficients are considered to take values in the category of abelian groups and are often called ``bundle of abelian groups". Mapping into $\mathsf{mod}_{\mathrm{Nov}(\mathcal{A})}$ will allow us to obtain actual Novikov-module isomorphisms at times where working with bundle of abelian groups would merely grant group isomorphisms. 
\end{remark}

With this we define the anticipated twisted Novikov complexes.

\begin{definition}\label{definition twisted complexes}
Let $\theta \colon \mathcal{A} \to \Omega^1(M)$ a section as above. We define the \textbf{twisted Novikov chain complex groups} by
\begin{equation*}
\mathrm{CN}_\bullet\left(\theta_a,\underline{\mathrm{Nov}}(\mathcal{A}) \right):=\bigoplus_{x \in Z(\theta_a)} \mathrm{Nov}(\mathcal{A}) \, \langle x \rangle, \quad \forall a \in \mathcal{A}.
\end{equation*}
The \textbf{twisted boundary oparator} $\underline{\partial}=\underline{\partial}_{\theta_a}$ is defined by
\begin{equation*}
\underline{\partial}(\lambda \, x):=\sum_{y, \, \gamma \in \underline{\mathcal{M}}(x,y;\theta_a)} \underline{\mathrm{Nov}}(\mathcal{A})(\gamma^{-1})(\lambda) \, y=\sum_{y, \, \gamma \in \underline{\mathcal{M}}(x,y;\theta_a)} t_0^{\int_{\underline{\gamma}} a_0} \, \cdots \; t_k^{\int_{\underline{\gamma}} a_k} \cdot \lambda  \, y, \quad \forall \lambda \in \mathrm{Nov}(\mathcal{A}).
\end{equation*}
The twisted chain complexes
\begin{equation*}
\left(\mathrm{CN}_\bullet\left(\theta_a,\underline{\mathrm{Nov}}\left(\mathcal{A}  |_{\mathcal{B}}\right) \right), \underline{\partial}_{\theta_a}  \right), \quad \forall a \in \mathcal{A}
\end{equation*}
are defined analogously. The corresponding \textbf{twisted Novikov homologies} are denoted by 
\begin{equation*}
\mathrm{HN}_\bullet\left(\theta_a,\underline{\mathrm{Nov}}(\mathcal{A}) \right) \text{ and  } \mathrm{HN}_\bullet\left(\theta_a,\underline{\mathrm{Nov}}\left(\mathcal{A}  |_{\mathcal{B}}\right) \right), \quad \forall a \in \mathcal{A}, \, \mathcal{B} \subseteq \mathcal{A}.
\end{equation*}
For every flow line $\gamma \in \underline{\mathcal{M}}(x,y;\theta_a)$ we call $$\underline{\mathrm{Nov}}(\mathcal{A})(\gamma^{-1})=t_0^{\int_{\underline{\gamma}} a_0}\cdots t_k^{\int_{\underline{\gamma}} a_k}$$ the \textbf{Novikov twist} of $\gamma$.
\end{definition}

\begin{remark}
A priori it is not clear that $\underline{\partial}$ maps into the prescribed chain complex. We will prove this in the next subsection, see Proposition \ref{proposition twisted and polytop chain complexes agree}.
\end{remark}

The Novikov twist of $\gamma$ determines the lifting behaviour of $\gamma$. Indeed, if $\gamma_0, \gamma_1$ are two paths from $x$ to $y$ with unique lifts $\tilde{\gamma}_1(0)=\tilde{\gamma}_0(0)=\tilde{x}$, then:

\begin{align*}
\tilde{\gamma}_0(1)=\tilde{\gamma}_1(1) &\iff \int_{\gamma_0*\gamma_1^{-1}} a_l=0, \, \forall l=0,\dots,k  \\
&\iff \int_{\gamma_0}a_l =\int_{\gamma_1}a_l, \, \forall l=0,\dots,k \\
&\iff \int_{\underline{\gamma_0}}a_l =\int_{\underline{\gamma_1}}a_l, \, \forall l=0,\dots,k \\
&\iff t_0^{\int_{\underline{\gamma_0}} a_0} \cdots \, t_k^{\int_{\underline{\gamma_0}}a_k}=t_0^{\int_{\underline{\gamma_1}} a_0} \cdots \, t_k^{\int_{\underline{\gamma_1}}a_k}.
\end{align*}
This will be key in the next subsection.

As the next examples shows, we recover the \emph{twisted Morse chain complex} and its homology as a special case.

\begin{example}\label{example twisted morse as special case}
Pick $\mathcal{A}=\langle 0, a \rangle$ and a section $\theta \colon \mathcal{A} \to \Omega^1(M)$. Then 
\begin{equation*}
\mathrm{Nov}(\mathcal{A}|_a) \cong \mathrm{Nov}(a) \cong \Lambda_a,
\end{equation*}
thanks to Proposition \ref{proposition novikov rings match} and  Example \ref{example polytope consisting of point}. Furthermore, the twisted chain complex $$\mathrm{CN}_\bullet \left(\theta_0,\underline{\mathrm{Nov}}(\mathcal{A}|_a)\right)=\mathrm{CN}_\bullet \left(\theta_0,\underline{\mathrm{Nov}}(a)\right)$$ agrees with the twisted Morse complex $$\mathrm{CM}_\bullet\left(h, \underline{\mathrm{Nov}}(a) \right), \text{ where } \theta_0=dh.$$
\end{example}

\subsection{Comparing twisted and polytope complexes}

In general, the same issues as in subsection \ref{subsection Novikov homology with polytopes} arises when trying to prove that the twisted Novikov complexes are well defined: it is not clear whether $\underline{\partial}$ maps into the desired chain complex. This is a non-issue in the special case of $\theta_0=dh$, i.e. twisted Morse homology -- the reason is that the $0$-dimensional moduli spaces $\underline{\mathcal{M}}(x,y;h)$ are compact, hence finite. Compare this to \cite{ritter2009}. In the following however, we will see that the twisted chain \emph{groups} can always be identified with $\mathrm{CN}_\bullet(\theta_a,\mathcal{A})$ so that $\underline{\partial}$ and $\partial$ agree, which then resolves the well definedness issue by Theorem \ref{theorem perturbed section}. In other words, the twisted chain complex is an equivalent description of the polytope chain complex.

\begin{proposition}\label{proposition twisted and polytop chain complexes agree}
Let $\theta \colon \mathcal{A} \to \Omega^1(M)$ be a section as above. Then the twisted and polytope Novikov chain groups are isomorphic
\begin{equation}\label{equation correspondende twisted and polytope chain complexes}
\mathrm{CN}_\bullet\left(\theta_a,\underline{\mathrm{Nov}}(\mathcal{A}) \right) \longleftrightarrow \mathrm{CN}_\bullet(\theta_a,\mathcal{A}),
\end{equation}
as Novikov-modules. The isomorphism is preserved upon restrictions $\mathcal{A}  |_{\mathcal{B}}$, and $\partial= \underline{\partial}$ up to the identification \eqref{equation correspondende twisted and polytope chain complexes}.

In particular, the twisted Novikov chain complexes are well defined with
\begin{equation*}
\mathrm{HN}_\bullet\left(\theta_a,\underline{\mathrm{Nov}}(\mathcal{A}) \right)=\mathrm{HN}_\bullet(\theta_a,\mathcal{A}) \text{ and } \mathrm{HN}_\bullet \left(\theta_a,\underline{\mathrm{Nov}}\left(\mathcal{A}  |_{\mathcal{B}}\right) \right)=\mathrm{HN}_\bullet\left(\theta_a,\mathcal{A}  |_{\mathcal{B}}\right), 
\end{equation*}
for all $a \in \mathcal{A}$ and subpolytopes $\mathcal{B} \subseteq \mathcal{A}$.
\end{proposition}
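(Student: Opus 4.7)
The plan is to construct an explicit $\mathrm{Nov}(\mathcal{A})$-module isomorphism $\Phi$ from $\mathrm{CN}_\bullet(\theta_a,\underline{\mathrm{Nov}}(\mathcal{A}))$ to $\mathrm{CN}_\bullet(\theta_a,\mathcal{A})$ that intertwines $\underline{\partial}$ with $\partial$. Once this is in place, well-definedness of $\underline{\partial}$ and the equality of homologies will both follow automatically, since the polytope-side boundary operator is already known to be well defined by Theorem \ref{theorem perturbed section}. First I would pick, for each $x \in Z(\theta_a)$, a preferred lift $\tilde{x} \in \pi^{-1}(x) \subset \widetilde{M}_{\mathcal{A}}$. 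Since $Z(\theta_a)$ is finite and each fiber $\pi^{-1}(x)$ is a principal $\Gamma_{\mathcal{A}}$-set, every critical point of $\tilde{f}_{\theta_a}$ is uniquely of the form $A \cdot \tilde{x}$ for some pair $(x,A) \in Z(\theta_a) \times \Gamma_{\mathcal{A}}$. Using the ring identification $\Lambda_{\mathcal{A}} \cong \mathrm{Nov}(\mathcal{A})$ of Proposition \ref{proposition novikov ring GENERAL agree}, define
$$\Phi \colon \mathrm{CN}_\bullet(\theta_a,\underline{\mathrm{Nov}}(\mathcal{A})) \longrightarrow \mathrm{CN}_\bullet(\theta_a,\mathcal{A}), \qquad \lambda \, x \mapsto \sum_A n_A \, (A \cdot \tilde{x}),$$
where $\lambda = \sum_A n_A \, t_0^{\Phi_{a_0}(A)} \cdots t_k^{\Phi_{a_k}(A)}$.

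Next I would verify that $\Phi$ is a bijection of chain groups respecting the Novikov-module structures. By Lemma \ref{lemma vertex determien CN polytope}, the multi-finiteness condition on the polytope side reduces to checking finiteness at each vertex $a_l$. For fixed $x$, the standard relation $\tilde{f}_{\theta_{a_l}}(A \cdot \tilde{x}) = \tilde{f}_{\theta_{a_l}}(\tilde{x}) + \Phi_{a_l}(A)$ between lifted primitives and the period homomorphism (with the usual sign convention) translates $\{A : n_{x,A} \neq 0, \, \tilde{f}_{\theta_{a_l}}(A \cdot \tilde{x}) > c\}$ being finite for all $c$ directly into the defining finiteness condition of $\mathrm{Nov}(\mathcal{A})$. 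Summing over the finitely many $x \in Z(\theta_a)$ and over vertices $a_l$ yields the equivalence of the two finiteness notions, and the restricted case $\mathcal{B} \subseteq \mathcal{A}$ is handled by the same lemma applied only to vertices of $\mathcal{B}$.

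Then I would check $\Phi \circ \underline{\partial} = \partial \circ \Phi$ via the lift-projection correspondence on trajectories. Given $\gamma \in \underline{\mathcal{M}}(x,y;\theta_a)$, its unique lift $\tilde{\gamma}$ starting at the preferred lift $\tilde{x}$ ends at some $B \cdot \tilde{y}$ for a unique $B = B(\gamma) \in \Gamma_{\mathcal{A}}$, and conversely every upstairs Morse trajectory from $\tilde{x}$ ending in the fiber $\pi^{-1}(y)$ arises this way. A direct computation using $\pi^*\theta_{a_l} = d \tilde{f}_{\theta_{a_l}}$ gives $\Phi_{a_l}(B(\gamma)) = \int_{\underline{\gamma}} a_l$, where $\underline{\gamma} = \eta_x * \gamma * \eta_y^{-1}$ is the loop used to define the local coefficient system, provided the preferred paths $\eta_x, \eta_y$ are chosen compatibly with the preferred lifts $\tilde{x}, \tilde{y}$. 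Under the identification $B \leftrightarrow t_0^{\Phi_{a_0}(B)} \cdots t_k^{\Phi_{a_k}(B)}$ the upstairs endpoint $B \cdot \tilde{y}$ corresponds to exactly the Novikov-twist term $\underline{\mathrm{Nov}}(\mathcal{A})(\gamma^{-1}) \cdot y$ contributing to $\underline{\partial}(x)$, so the two boundary operators match.

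The main obstacle is the coordinated choice of preferred lifts and preferred paths in this last step: without compatibility between $(\tilde{x})_x$ and $(\eta_x)_x$, a constant Novikov-ring correction factor appears in each fiber, and one has to verify that it cancels along the boundary formula. Once $\Phi$ intertwines $\partial$ and $\underline{\partial}$, the image of $\underline{\partial}$ automatically lies in $\mathrm{CN}_\bullet(\theta_a,\underline{\mathrm{Nov}}(\mathcal{A}))$ and $\underline{\partial}^2 = 0$ is inherited from $\partial^2 = 0$ of Theorem \ref{theorem perturbed section}. The claimed homology identifications then follow, and the entire argument carries over verbatim to the restricted setting $\mathcal{A}|_{\mathcal{B}}$ by replacing $\mathrm{Nov}(\mathcal{A})$ with $\mathrm{Nov}(\mathcal{A}|_{\mathcal{B}})$ and invoking the restricted version of Lemma \ref{lemma vertex determien CN polytope}.
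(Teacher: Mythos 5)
Your proposal is correct and follows essentially the same route as the paper: fix preferred lifts $\tilde{x}$ to identify both chain groups with a finite free $\mathrm{Nov}(\mathcal{A})\cong\Lambda_{\mathcal{A}}$-module, then match $\partial$ and $\underline{\partial}$ generator by generator via the lift-projection correspondence for trajectories, using that the Novikov twist $t_0^{\int_{\underline{\gamma}}a_0}\cdots t_k^{\int_{\underline{\gamma}}a_k}$ determines exactly the deck element $B(\gamma)$ with $\tilde{\gamma}(+\infty)=B(\gamma)\cdot\tilde{y}$. The one point you flag — compatibility of the preferred paths $\eta_x$ with the preferred lifts $\tilde{x}$ — is glossed over in the paper; as you observe it is harmless (choose the $\eta_x$ to lift to paths from the basepoint lift to $\tilde{x}$, or equivalently absorb the discrepancy $C_x\in\Gamma_{\mathcal{A}}$ into a diagonal rescaling of $\Phi$), but naming it is a useful clarification.
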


\begin{proof}
First of all recall that we can view the $i$-th polytope Novikov chain groups as finitely-generated Novikov-modules by fixing a finite set of preferred lifts $\tilde{x}_m \in \pi^{-1}(x_m)$, for each zero $x_m$ of $\theta_a$ of index $i$:
\begin{equation*}
\mathrm{CN}_i(\theta_a,\mathcal{A}) \cong \bigoplus_m \Lambda_\mathcal{A} \langle \tilde{x}_m \rangle, \quad \text{ as Novikov ring modules}.
\end{equation*}
Since $\Lambda_\mathcal{A} \cong \mathrm{Nov}(\mathcal{A})$ (cf. Proposition \ref{proposition novikov ring GENERAL agree}), we end up with
$$\mathrm{CN}_i(\theta_a,\mathcal{A}) \cong \bigoplus_m \Lambda_\mathcal{A} \langle \tilde{x}_m \rangle \cong \bigoplus_m \mathrm{Nov}(\mathcal{A}) \langle x_m \rangle =\mathrm{CN}_i\left(\theta_a,\underline{\mathrm{Nov}}(\mathcal{A})\right). $$
Both boundary operators $\partial$ and $\underline{\partial}$ are $\Lambda_\mathcal{A}$- and $\mathrm{Nov}(\mathcal{A})$-linear, thus it suffices to compare $\partial \tilde{x}_m$ and $\underline{\partial} x_m$. On one hand we have
\begin{equation}\label{equation boundary polytope compare}
\partial \tilde{x}_m=\sum_n \lambda_{m,n} \tilde{y}_n, \text{ with } \lambda_{m,n}=\sum_{A \in \Gamma_\mathcal{A}}\#_{\mathrm{alg}} \, \underline{\mathcal{M}}\left(\tilde{x}_m,A \tilde{y}_n;\tilde{f}_{\theta_a}\right) \, A \in \Lambda_{\mathcal{A}},
\end{equation}
and the other hand
\begin{equation}\label{equation boundary twisted compare}
\underline{\partial} x_m=\sum_n \left( \sum_{\gamma \in \underline{\mathcal{M}}\left(x_m,y_n,\theta_a \right)} t_0^{\int_{\underline{\gamma}} a_0 } \, \cdots \; t_k^{\int_{\underline{\gamma}} a_k}  \right) \, y_n.
\end{equation}
In the previous subsection we have seen that the Novikov twist $t_0^{\int_{\underline{\gamma}} a_0} \, \cdots \; t_k^{\int_{\underline{\gamma}} a_k}$ of $\gamma$ uniquely determines the lifting behaviour of $\gamma$. Thus if $\tilde{\gamma}$ denotes the unique lift which starts at $\tilde{x}_n$ and ends at some $\tilde{y}$, we get
$$\tilde{y}=A\tilde{y}_n,$$ with $A \in \Gamma_\mathcal{A} \subset \Lambda_\mathcal{A}$ corresponding to $t_0^{\int_{\underline{\gamma}} a_0} \, \cdots \; t_k^{\int_{\underline{\gamma}}a_k} \in \mathrm{Nov}(\mathcal{A})$. This proves that \eqref{equation boundary polytope compare} and \eqref{equation boundary twisted compare} agree up to identifying the respective isomorphic Novikov rings. The same proof also shows that the restricted complexes associated to $\mathcal{A} |_{\mathcal{B}}$ agree.

With the identification of twisted and polytope complexes at hand, we can invoke Theorem \ref{theorem perturbed section} (recall that we already assumed that $\theta$ is perturbed accordingly) and deduce that the twisted chain complex is well defined. By the first part it follows that the corresponding homologies agree. This finishes the proof.

\end{proof}

\begin{remark}\label{remark novikov weights to prove partial squared equal to 0}
The twisted complex can be used to deduce properties of the polytope complex and vice versa.  For instance, trying to prove that $\partial^2=0$ is equivalent to proving $\underline{\partial}^2=0$, which has a far more pleasant proof -- the reason is that the Novikov twists are nicely behaved with respect to the compactification of the moduli spaces, see for instance \cite[Proposition 1]{ritter2009}.
\end{remark}

\section{Applications of the Main Theorem}\label{section Applications of the Main Theorem}

\subsection{The 0-vertex trick and the Morse-Eilenberg Theorem}\label{subsection The 0-vertex trick and the Morse-Eilenberg Theorem}

All the applications we are about to present boil down to what we call the \emph{0-vertex trick}. The idea is to relate the polytope chain groups to twisted Morse chain groups  by extending the underlying polytope $\mathcal{A}$ with $0 \in H^1_{\mathrm{dR}}(M)$ as an additional vertex, and then using the Main Theorem \ref{theorem main technical theorem}. This trick suffices to prove the (twisted) Novikov Morse Homology Theorem (cf. Theorem \ref{theorem NMH Theorem} and Corollary \ref{corollary NHT for real}).

For the polytope Novikov Principle (cf. Theorem \ref{theorem Polytop Novikov Principle}) we shall need a Morse variant of the Eilenberg Theorem \cite[Theorem 24.1]{eilenberg1947}, which has been proven in \cite[Theorem 2.21]{banyaga2019}. We will state and prove a slightly stronger version in the context of Novikov theory down below, see Lemma \ref{lemma Morse-Eilenberg}.

\begin{lemma}[0-vertex trick]\label{lemma 0 vertex trick}
Let $\theta \colon \mathcal{A} \to \Omega^1(M)$ be a section, $\mathcal{B} \subseteq \mathcal{A}$ a subpolytope, and $\mathcal{A}^0$ the polytope spanned by the vertices of $\mathcal{A}$ and $0$.\footnote{Note that if $0$ is already contained in $\mathcal{A}$, then $\mathcal{A}^0=\mathcal{A}$.} Let $\theta^0 \colon \mathcal{A}^0 \to \Omega^1(M)$ be a section extending $\theta$. Then
$$\Z[\Gamma_{\mathcal{A}}]=\Z[\Gamma_{\mathcal{A}^0}] \text{ and } \underline{\mathrm{Nov}}(\mathcal{A} |_{\mathcal{B}}) = \underline{\mathrm{Nov}}(\mathcal{A}^0 |_{\mathcal{B}}).$$
In particular, there exists a perturbed section $\vartheta^0 \colon \mathcal{A}^0 \to \Omega^1(M)$ such that
$$\mathrm{CN}_\bullet\left(\vartheta^0_a,\mathcal{A} |_{\mathcal{B}}\right)=\mathrm{CN}_\bullet\left(\vartheta^0_a,\mathcal{A}^0 |_{\mathcal{B}}\right) \simeq \mathrm{CN}_\bullet \left(\vartheta^0_0,\mathcal{A}^0 |_{\mathcal{B}} \right)=\mathrm{CM}_\bullet\left(h,\underline{\mathrm{Nov}}(\mathcal{A} |_{\mathcal{B}}) \right), \quad \forall a \in \mathcal{A},$$
as chain complexes, where $dh=\vartheta^0_0$.
\end{lemma}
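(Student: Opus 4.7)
The plan is to dispatch the three assertions in order, with most of the work being bookkeeping once the Main Theorem is invoked.

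First I would verify the two equalities. Since the period homomorphism $\Phi_0 \colon \pi_1(M) \to \R$ of the zero class is trivial, $\ker(0) = \pi_1(M)$, so
\[
\bigcap_{a_l \in \mathcal{A}^0} \ker(a_l) = \ker(0) \cap \bigcap_{l=0}^k \ker(a_l) = \bigcap_{l=0}^k \ker(a_l),
\]
hence $\Gamma_{\mathcal{A}^0} = \Gamma_{\mathcal{A}}$ and $\Z[\Gamma_{\mathcal{A}^0}] = \Z[\Gamma_{\mathcal{A}}]$. Since $\mathcal{B} \subseteq \mathcal{A}$ (in particular $0 \notin \mathcal{B}$ is irrelevant), the restricted Novikov ring $\Lambda_{\mathcal{A}^0 |_\mathcal{B}}$ is the intersection of the upward completions of this common group ring with respect to exactly the same finite family $\{\Phi_{b_j}\}_{b_j \in \mathcal{B}}$ of period homomorphisms used to define $\Lambda_{\mathcal{A}|_\mathcal{B}}$, so the two rings agree. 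The corresponding local systems (built from the same representation $\rho_{\mathcal{A}|_\mathcal{B}}$) therefore coincide as well, proving $\underline{\mathrm{Nov}}(\mathcal{A}|_\mathcal{B}) = \underline{\mathrm{Nov}}(\mathcal{A}^0|_\mathcal{B})$.

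Next I would produce $\vartheta^0$ by applying Theorem \ref{theorem perturbed section} directly to the extended section $\theta^0$ relative to any auxiliary reference Morse-Smale pair $(H,g)$. This simultaneously guarantees the well-definedness of every polytope chain complex $\mathrm{CN}_\bullet(\vartheta^0_c, \mathcal{A}^0|_\mathcal{B})$ for $c \in \mathcal{A}^0$, in particular for $c = a$ and for $c = 0$; it also selects a preferred representative $\vartheta^0_0 = dh$ of the zero class (an exact form).

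Finally I would chase the displayed chain of relations. The first equality
\[
\mathrm{CN}_\bullet(\vartheta^0_a, \mathcal{A}|_\mathcal{B}) = \mathrm{CN}_\bullet(\vartheta^0_a, \mathcal{A}^0|_\mathcal{B})
\]
is immediate from Lemma \ref{lemma vertex determien CN polytope}: both sides are intersections $\bigcap_{b_j \in \mathcal{B}} \widehat{V}_\bullet(\vartheta^0_a, \cdot)_{b_j}$ over the \emph{same} set of vertices, and because $\widetilde{M}_{\mathcal{A}} = \widetilde{M}_{\mathcal{A}^0}$ by the first paragraph, the underlying free groups $V_\bullet$ are literally identified. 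The chain homotopy equivalence
\[
\mathrm{CN}_\bullet(\vartheta^0_a, \mathcal{A}^0|_\mathcal{B}) \simeq \mathrm{CN}_\bullet(\vartheta^0_0, \mathcal{A}^0|_\mathcal{B})
\]
is precisely the lower horizontal map of the Main Theorem \ref{theorem main technical theorem} applied to the ambient polytope $\mathcal{A}^0$, the two classes $a, 0 \in \mathcal{A}^0$, and the subpolytope $\mathcal{B}$; the fact that $0 \in \mathcal{A}^0$ (rather than in $\mathcal{A}$) is exactly what the $0$-vertex trick buys us. The last equality follows from Proposition \ref{proposition twisted and polytop chain complexes agree} applied to $\vartheta^0_0 = dh$ (which identifies the polytope and twisted models with matching boundary operators), combined with the identification of the twisted complex with the twisted Morse complex as in Example \ref{example twisted morse as special case}.

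The only point requiring any care is Novikov-module linearity of the middle equivalence, but this is built into Theorem \ref{theorem main technical theorem}; together with $\Lambda_{\mathcal{A}^0|_\mathcal{B}} \cong \mathrm{Nov}(\mathcal{A}|_\mathcal{B})$ from the first paragraph, the equivalence is naturally a chain homotopy equivalence of $\mathrm{Nov}(\mathcal{A}|_\mathcal{B})$-modules. No further analytic input is needed — the statement is a pure assembly of the Main Theorem with the identification of chain groups.
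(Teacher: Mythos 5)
Your proof is correct and follows essentially the same route as the paper's: compute the kernel intersection to identify the covers and deck groups, observe that the restricted Novikov rings and local systems only see $\mathcal{B}$, obtain $\vartheta^0$ from Theorem \ref{theorem perturbed section}, and then read off the three relations from Lemma \ref{lemma vertex determien CN polytope}, the Main Theorem \ref{theorem main technical theorem}, and Example \ref{example twisted morse as special case}. The only cosmetic difference is that you explicitly cite Proposition \ref{proposition twisted and polytop chain complexes agree} to pass from the polytope complex at $\vartheta^0_0$ to the twisted Morse model, whereas the paper folds this into its appeal to Example \ref{example twisted morse as special case} together with the local-coefficient equality.
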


\begin{proof}
Since $\ker(0)=\pi_1(M)$, adding $0$ as vertex does not affect the underlying abelian cover, i.e. $\widetilde{M}_\mathcal{A}=\widetilde{M}_{\mathcal{A}^0}$, also see Example \ref{example polytope consisting of point}. Thus the deck transformation groups $\Gamma_{\mathcal{A}}$ and $\Gamma_{\mathcal{A}^0}$ are equal and so are the respective group rings. The finiteness conditions for both $\mathrm{Nov}(\mathcal{A} |_{\mathcal{B}})$ and $\mathrm{Nov}(\mathcal{A}^0 |_{\mathcal{B}})$ is determined by the subpolytope $\mathcal{B} \subseteq \mathcal{A}$, and thus by the group ring equality we also deduce $$\mathrm{Nov}(\mathcal{A} |_{\mathcal{B}})=\mathrm{Nov}(\mathcal{A}^0 |_{\mathcal{B}}).$$
The equality as local coefficient systems then also follows by observing that the period homomorphism $\Phi_0$ is identically zero, hence $t^{\Phi_0(A)}=1$ for all $A \in \Gamma_{\mathcal{A}^0}$.
Pick $\vartheta^0$ as in Theorem \ref{theorem perturbed section} (or Corollary \ref{corollary other chain complex perturbed}). The first chain polytope equality follows from the Novikov rings being equal and the chain homotopy equivalence stems from the Main Theorem \ref{theorem main technical theorem}. The last equality follows from Example \ref{example twisted morse as special case} and the equality of local coefficient systems above.

\end{proof}

We conclude the subsection by stating and proving a chain-level Morse variant of the Eilenberg Theorem \cite[Theorem 2.21]{banyaga2019}

\begin{lemma}[Morse-Eilenberg Theorem]\label{lemma Morse-Eilenberg}
Let $h \colon M \to \R$ be Morse function, $\mathcal{A}$ a polytope and $\mathcal{B} \subseteq \mathcal{A}$ a subpolytope. Then
$$\mathrm{CM}_\bullet\left(\tilde{h}\right) \otimes_{\Z[\Gamma_{\mathcal{A}}]} \mathrm{Nov}(\mathcal{A} |_{\mathcal{B}}) \cong \mathrm{CM}_\bullet\left(h,\underline{\mathrm{Nov}}(\mathcal{A} |_{\mathcal{B}}) \right)$$
as chain complexes over $\mathrm{Nov}(\mathcal{A} |_{\mathcal{B}})$, where $\tilde{h}=h\circ \pi$.
\end{lemma}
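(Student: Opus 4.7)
The plan is to exhibit an explicit isomorphism of Novikov-modules that visibly intertwines the two boundary operators. As a preparatory step I would fix, once and for all, a preferred lift $\tilde{x}\in\pi^{-1}(x)$ for every critical point $x\in\mathrm{Crit}(h)$. Since $\pi\colon\widetilde{M}_{\mathcal{A}}\to M$ is a regular cover with deck group $\Gamma_{\mathcal{A}}$, we have $\mathrm{Crit}_i(\tilde{h})=\Gamma_{\mathcal{A}}\cdot\{\tilde{x}\mid x\in\mathrm{Crit}_i(h)\}$, and $\mathrm{CM}_\bullet(\tilde{h})$ becomes a free $\Z[\Gamma_{\mathcal{A}}]$-module on the preferred lifts (the boundary is a finite sum because a fixed $\tilde{x}$ has only finitely many trajectories of $\tilde{h}$ to lifts of each given $y\in\mathrm{Crit}_{i-1}(h)$). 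Tensoring over $\Z[\Gamma_{\mathcal{A}}]$ therefore yields the free $\mathrm{Nov}(\mathcal{A}|_{\mathcal{B}})$-module
\[
\mathrm{CM}_i(\tilde{h})\otimes_{\Z[\Gamma_{\mathcal{A}}]}\mathrm{Nov}(\mathcal{A}|_{\mathcal{B}})\;\cong\;\bigoplus_{x\in\mathrm{Crit}_i(h)}\mathrm{Nov}(\mathcal{A}|_{\mathcal{B}})\,\langle\tilde{x}\otimes 1\rangle,
\]
which has the same rank as $\mathrm{CM}_i(h,\underline{\mathrm{Nov}}(\mathcal{A}|_{\mathcal{B}}))=\bigoplus_x\mathrm{Nov}(\mathcal{A}|_{\mathcal{B}})\langle x\rangle$.

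Next I would define
\[
\Phi\colon\mathrm{CM}_\bullet(\tilde{h})\otimes_{\Z[\Gamma_{\mathcal{A}}]}\mathrm{Nov}(\mathcal{A}|_{\mathcal{B}})\longrightarrow\mathrm{CM}_\bullet(h,\underline{\mathrm{Nov}}(\mathcal{A}|_{\mathcal{B}})),\qquad \tilde{x}\otimes\lambda\longmapsto\lambda\cdot x,
\]
on preferred lifts and extend $\mathrm{Nov}(\mathcal{A}|_{\mathcal{B}})$-linearly in the second tensor factor. This is automatically a well-defined bijection of modules: on a non-preferred generator $A\tilde{x}$ with $A\in\Gamma_{\mathcal{A}}$ the tensor relation forces $A\tilde{x}\otimes\lambda=\tilde{x}\otimes A\lambda$, and $\Phi$ sends both to $A\lambda\cdot x$, using the ring inclusion $\Z[\Gamma_{\mathcal{A}}]\hookrightarrow\mathrm{Nov}(\mathcal{A}|_{\mathcal{B}})$ provided by Proposition \ref{proposition novikov ring GENERAL agree}.

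The core of the proof is then to verify that $\Phi$ intertwines the boundary operators. Starting from
\[
\partial\tilde{x}\otimes 1\;=\;\sum_{y\in\mathrm{Crit}_{i-1}(h)}\sum_{A\in\Gamma_{\mathcal{A}}}\#_{\mathrm{alg}}\underline{\mathcal{M}}(\tilde{x},A\tilde{y};\tilde{h})\;(A\tilde{y}\otimes 1)\;=\;\sum_{y,A}\#_{\mathrm{alg}}\underline{\mathcal{M}}(\tilde{x},A\tilde{y};\tilde{h})\;(\tilde{y}\otimes A),
\]
the key observation is the one already used in the proof of Proposition \ref{proposition twisted and polytop chain complexes agree}: the trajectories of $\tilde{h}$ from $\tilde{x}$ to $A\tilde{y}$ are in bijection with those downstairs Morse trajectories $\gamma\in\underline{\mathcal{M}}(x,y;h)$ whose lift through $\tilde{x}$ ends at $A\tilde{y}$, and for such $\gamma$ one has
\[
A\;=\;t_0^{\Phi_{a_0}(A)}\cdots t_k^{\Phi_{a_k}(A)}\;=\;t_0^{\int_{\underline{\gamma}}a_0}\cdots t_k^{\int_{\underline{\gamma}}a_k}
\]
under the identification $\Z[\Gamma_{\mathcal{A}}]\hookrightarrow\mathrm{Nov}(\mathcal{A}|_{\mathcal{B}})$. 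Applying $\Phi$ therefore reproduces exactly the formula
\[
\underline{\partial}x\;=\;\sum_{y}\sum_{\gamma\in\underline{\mathcal{M}}(x,y;h)}t_0^{\int_{\underline{\gamma}}a_0}\cdots t_k^{\int_{\underline{\gamma}}a_k}\cdot y,
\]
so $\Phi\circ\partial=\underline{\partial}\circ\Phi$ on generators, and $\mathrm{Nov}(\mathcal{A}|_{\mathcal{B}})$-linearity extends this to the whole complex.

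The main obstacle is the bookkeeping in the last paragraph, namely matching the deck transformation carrying a preferred lift of $\tilde{y}$ to the true endpoint of $\tilde{\gamma}$ with the Novikov twist defined via the auxiliary paths $\eta_x,\eta_y$ in the local system $\underline{\mathrm{Nov}}(\mathcal{A})$. This is purely a sign/convention check, streamlined by a suitable choice of the reference paths $\eta_x$ (for instance, by choosing $\eta_x$ so that its lift starting at the basepoint's preferred lift ends at $\tilde{x}$), and is exactly the content of the discussion of the lifting behaviour in Subsection \ref{subsection The twisted Novikov complex}; once set up, both sides of the boundary formula match term by term, giving the desired chain isomorphism over $\mathrm{Nov}(\mathcal{A}|_{\mathcal{B}})$.
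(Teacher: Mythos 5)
Your proof is correct and follows essentially the same route as the paper's: both fix preferred lifts, define the map $\tilde{x}\otimes\lambda\mapsto\lambda x$, observe it is a Novikov-module isomorphism of the underlying free modules, and then verify the chain property by matching the deck transformation determined by the lift of a downstairs trajectory with the Novikov twist of that trajectory (the content of Proposition \ref{proposition twisted and polytop chain complexes agree}). The only cosmetic difference is that you check commutation with the boundary on generators $\tilde{x}\otimes 1$ and extend by linearity, whereas the paper computes on $\tilde{x}_m\otimes\lambda$ directly and invokes the commutativity of $\mathrm{Nov}(\mathcal{A}|_{\mathcal{B}})$ explicitly; both arguments need that commutativity to ensure $\underline{\partial}$ is Novikov-linear, and your extra remark about choosing the reference paths $\eta_x$ to lift to paths ending at the preferred lifts is a clean way to streamline the sign bookkeeping that the paper leaves implicit.
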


The proof of \cite[Theorem 2.21]{banyaga2019} constructs a group chain isomorphism $\Psi$ and a careful inspection reveals that $\Psi$ defines a Novikov-module chain isomorphism when working with the according local coefficient system $\underline{\mathrm{Nov}}(\mathcal{A} |_{\mathcal{B}})$. Nevertheless, we decided to give a full proof of Lemma \ref{lemma Morse-Eilenberg} using the tools developed in the previous sections.

\begin{proof}[Proof of Lemma \ref{lemma Morse-Eilenberg}]
First of all observe that $\mathrm{CM}_\bullet\left(\tilde{h}\right)$ is a finite $\Z[\Gamma_\mathcal{A}]$-module
$$\mathrm{CM}_\bullet(\tilde{h})=\bigoplus_m \Z[\Gamma_\mathcal{A}] \, \langle \tilde{x}_m \rangle.$$
Here $\{\tilde{x}_m\}$ denotes a finite set of preferred lifts as in the proof of Proposition \ref{proposition twisted and polytop chain complexes agree}. Define
$$\Psi \colon \mathrm{CM}_\bullet\left(\tilde{h}\right) \otimes_{\Z[\Gamma_{\mathcal{A}}]} \mathrm{Nov}(\mathcal{A} |_{\mathcal{B}}) \longrightarrow \mathrm{CM}_\bullet\left(h,\underline{\mathrm{Nov}}(\mathcal{A} |_{\mathcal{B}}) \right), \; \Psi(\tilde{x}_m \otimes \lambda)=\lambda \, \langle x_m \rangle.$$ By the above observation $\Psi$ is a well defined $\Z[\Gamma_\mathcal{A}]$-linear map. It is clear that $\Psi$ is surjective. For injectivity we observe that $$\Psi(\tilde{x}_m \otimes \lambda)=\Psi(\tilde{x}_n \otimes \mu) \iff \lambda \, \langle x_m \rangle = \mu \, \langle x_n \rangle.$$ Therefore we must have $\lambda=\mu$ and $x_m=x_n$, hence $\tilde{x}_m=\tilde{x}_n$ -- recall that we are working with a preferred set of critical points in each fiber. This proves injectivity.

Next we show that $\Psi$ is $\mathrm{Nov}(\mathcal{A} |_{\mathcal{B}})$-linear.\footnote{We are implicitly identifying $\Lambda_{\mathcal{A} |_{\mathcal{B}}}=\mathrm{Nov}(\mathcal{A} |_{\mathcal{B}})$, see Proposition \ref{proposition novikov rings match}.} Pick $\lambda, \, \mu \in \mathrm{Nov}(\mathcal{A} |_{\mathcal{B}})$ and observe:
$$\Psi(\mu \cdot (\tilde{x}_m \otimes \lambda))=\Psi(\tilde{x}_m \otimes (\mu \cdot \lambda))=(\mu \cdot \lambda) \, \langle x_m \rangle=\mu \cdot (\lambda \, \langle x_m \rangle)=\mu \cdot \Psi(\tilde{x}_m \otimes \lambda).$$ Hence $\Psi$ is a Novikov-linear isomorphism.
We are only left to show $\underline{\partial} \circ \Psi=\Psi \circ (\partial^M \otimes \mathrm{id})$. Recall that $\partial^M$ is defined by counting Morse trajectories of $\tilde{h}$ on $\widetilde{M}_\mathcal{A}$. In particular, the boundary operator $\partial$ on $\mathrm{CN}_\bullet\left(dh,\mathcal{A}|_{\mathcal{B}}\right)$ agrees with $\partial^M$ on $\mathrm{Crit}(\tilde{h})$. Let us adopt the notation of the proof of Proposition \ref{proposition twisted and polytop chain complexes agree} and write $$\partial^M \tilde{x}_m=\partial \tilde{x}_m= \sum_n \lambda_{m,n} \, \tilde{y}_n, \quad \lambda_{m,n} \in \Z[\Gamma_{\mathcal{A}}]. \footnote{Sanity check: $\displaystyle\lambda_{m,n}=\sum_{\mathcal{A} \in \Gamma_{\mathcal{A}}} \#_{\mathrm{alg}} \, \underline{\mathcal{M}}(\tilde{x}_m,A \tilde{y}_n; \tilde{h}) \, A$, but $\displaystyle\bigcup_{A \in \Gamma_{\mathcal{A}}}\underline{\mathcal{M}}(\tilde{x}_m,A \tilde{y}_n;\tilde{h}) \cong \underline{\mathcal{M}}(x_m,y_n;h) $ and the latter is finite.}$$ Therefore 
$$\Psi \circ (\partial^M \otimes \mathrm{id}) \, \tilde{x}_m \otimes \lambda=\Psi \big( \sum_n \lambda_{m,n} \, \tilde{y}_n \otimes \lambda\big)=\Psi \big(\tilde{x}_m \otimes \sum_n \lambda_{m,n} \cdot \lambda \big)= \big( \sum_n \lambda_{m,n} \cdot \lambda \big) \, \langle x_m \rangle.$$ 
On the other hand Proposition \ref{proposition twisted and polytop chain complexes agree} says that up to identifying $\tilde{x}_m$ and $x_m$ we have $\partial \tilde{x}_m = \underline{\partial} \, x_m$, thus
$$\underline{\partial} \circ \Psi(\tilde{x}_m \otimes \lambda)=\underline{\partial}(\lambda \, \langle x_m \rangle)=\lambda \, \underline{\partial} \, x_m= \lambda \cdot \sum_n \lambda_{m,n} \, y_n= \big(\lambda \cdot \sum_n \lambda_{m,n} \big) \, \langle y_n \rangle.$$
But $\mathrm{Nov}(\mathcal{A} |_{\mathcal{B}})$ is a commutative ring, hence we conclude the chain property of $\Psi$ and thus that $\Psi$ defines a Novikov-linear chain isomorphism.

\end{proof}

\subsection{The twisted Novikov Morse Homology Theorem}\label{subsection The Novikov Morse Homology}

Using the results developed in the Section \ref{section Novikov Homology and Polytopes} and the $0$-vertex trick (cf. Lemma \ref{lemma 0 vertex trick}) we are going to prove:

\begin{theorem}\label{theorem NMH Theorem}
Let $f$ be a Morse function and $a \in H^1_{\mathrm{dR}}(M)$ a cohomolgoy class. Then for every Morse representative $\alpha \in a$ there exists a chain homotopy equivalence 
\begin{equation*}
\mathrm{CN}_\bullet (\alpha) \simeq \mathrm{CM}_\bullet \left( f, \underline{\mathrm{Nov}}(a) \right)
\end{equation*}
of Novikov-modules.
\end{theorem}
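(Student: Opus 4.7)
The plan is to execute the 0-vertex trick described in the introduction using the polytope $\mathcal{A} = \langle 0, a \rangle$ together with the subpolytope $\mathcal{B} = \langle a \rangle \subset \mathcal{A}$. Since $\ker(0) = \pi_1(M)$, adding the zero vertex does not alter the abelian cover, so $\widetilde{M}_\mathcal{A} = \widetilde{M}_a$ and $\Gamma_\mathcal{A} = \Gamma_a$. I first choose a section $\theta \colon \mathcal{A} \to \Omega^1(M)$ with $\theta_a = \alpha$ and $\theta_0 = df$, and apply Theorem \ref{theorem perturbed section} to produce a perturbation $\vartheta$ (together with metrics $g_{\vartheta_a}$) for which all the polytope Novikov complexes appearing in the Main Theorem are well defined. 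Write $dh := \vartheta_0$ for the resulting Morse function on $M$.

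Next I unwind the two corners of the bottom row of the Main Theorem. Because $\mathcal{B} = \langle a \rangle$ has a single vertex, the multi finiteness condition defining $\mathrm{CN}_\bullet(\vartheta_a, \mathcal{A}|_\mathcal{B})$ collapses to the ordinary Novikov finiteness condition with respect to $\tilde{f}_{\vartheta_a}$; consequently
\[
\mathrm{CN}_\bullet(\vartheta_a, \mathcal{A}|_\mathcal{B}) = \mathrm{CN}_\bullet(\vartheta_a)
\]
literally as chain complexes. On the left corner, using Proposition \ref{proposition novikov ring GENERAL agree} to identify $\mathrm{Nov}(\mathcal{A}|_\mathcal{B}) \cong \mathrm{Nov}(a)$ together with Example \ref{example twisted morse as special case}, we get $\mathrm{CN}_\bullet(dh, \mathcal{A}|_\mathcal{B}) = \mathrm{CM}_\bullet(h, \underline{\mathrm{Nov}}(a))$. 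The lower row of the Main Theorem \ref{theorem main technical theorem} applied to the two classes $a$ and $0$ therefore yields a Novikov-linear chain homotopy equivalence
\[
\mathrm{CN}_\bullet(\vartheta_a) \simeq \mathrm{CM}_\bullet(h, \underline{\mathrm{Nov}}(a)).
\]

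To finish the proof I need to replace the pair $(\vartheta_a, h)$ coming from the perturbation by the originally prescribed pair $(\alpha, f)$. On the Novikov side, $\alpha$ and $\vartheta_a$ are both Morse representatives of the same class $a$; Corollary \ref{corollary independence of data} applied to the trivial polytope $\langle a \rangle$ (where the restricted polytope complexes reduce to ordinary Novikov complexes) gives a Novikov-linear chain homotopy equivalence $\mathrm{CN}_\bullet(\alpha) \simeq \mathrm{CN}_\bullet(\vartheta_a)$. On the twisted Morse side, a standard chain continuation argument along a homotopy of Morse-Smale pairs connecting $(h, g_{\vartheta_0})$ to $(f, g)$ produces a Novikov-linear chain homotopy equivalence $\mathrm{CM}_\bullet(h, \underline{\mathrm{Nov}}(a)) \simeq \mathrm{CM}_\bullet(f, \underline{\mathrm{Nov}}(a))$; linearity is automatic since the coefficient system $\underline{\mathrm{Nov}}(a)$ is fixed throughout. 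Composing the three equivalences yields the theorem.

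The substantive content is entirely contained in the middle step, which is precisely where the Main Theorem is invoked; the two flanking steps are routine invariance results. The only point requiring mild care is checking that each ingredient genuinely lives in the Novikov-module category, but by construction all chain maps produced by Theorems \ref{theorem perturbed section}, \ref{theorem independence of chain complexes} and \ref{theorem main technical theorem} are Novikov-linear, as is the twisted Morse continuation with fixed coefficients.
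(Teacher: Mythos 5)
Your proof is correct and takes essentially the same route as the paper: both set $\mathcal{A}=\langle 0,a\rangle$, $\mathcal{B}=\langle a\rangle$, apply the 0-vertex trick (i.e.\ the lower row of the Main Theorem), identify $\underline{\mathrm{Nov}}(\mathcal{A}|_a)=\underline{\mathrm{Nov}}(a)$ via Example \ref{example twisted morse as special case}, and finish with a twisted-Morse continuation from $h$ to $f$. The only difference is that you make explicit the passage from the perturbed representative $\vartheta_a$ back to the arbitrarily prescribed $\alpha$ by a further continuation (the paper quietly absorbs this by observing $\theta_a\in a$); this is a reasonable and harmless refinement, though strictly speaking the equivalence $\mathrm{CN}_\bullet(\alpha)\simeq\mathrm{CN}_\bullet(\vartheta_a)$ is the classical invariance of ordinary Novikov complexes under choice of Morse--Smale representative rather than an application of Corollary \ref{corollary independence of data}, which only concerns perturbed sections.
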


One can prove that the twisted Morse homology computes singular homology with coefficients $\underline{\mathrm{Nov}}(a)$, see \cite[Theorem 4.1]{banyaga2019}.\footnote{The authors mention in the proof of \cite[Lemma 6.30]{banyaga2019} that the isomorphism in \cite[Theorem 4.1]{banyaga2019} is an isomorphism in the category of the underlying local coefficient system, thus a Novikov-module isomorphism in our case.} Combining this with Theorem \ref{theorem NMH Theorem} and taking the homology then shows:
\begin{corollary}(Twisted Novikov Homology Theorem)\label{corollary NHT for real}
For any cohomology class $a \in H^1_{\mathrm{dR}}(M)$ there exists an isomorphism
\begin{equation*}
\mathrm{HN}_\bullet (a) \cong \mathrm{H}_\bullet(M,\underline{\mathrm{Nov}}(a))
\end{equation*}
of Novikov-modules.
\end{corollary}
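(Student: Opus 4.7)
The plan is to instantiate the $0$-vertex trick (Lemma \ref{lemma 0 vertex trick}) and then bridge back to the chosen Morse representative $\alpha$ and Morse function $f$ via two standard independence statements. First I set $\mathcal{A} = \langle 0, a \rangle$ and $\mathcal{B} = \langle a \rangle$, choose a section $\theta : \mathcal{A} \to \Omega^1(M)$ with $\theta_a = \alpha$, and let $\vartheta$ be a perturbed section provided by Theorem \ref{theorem perturbed section}, so that every polytope Novikov chain complex in sight is well defined.

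Because $\ker(0) = \pi_1(M)$, Example \ref{example polytope consisting of point} identifies $\widetilde{M}_\mathcal{A} = \widetilde{M}_a$, and since $\mathcal{B} = \langle a \rangle$ imposes a finiteness condition only against the vertex $a$, unwinding Definition \ref{definition novikov chain complex restricted} yields the literal equality
$$\mathrm{CN}_\bullet(\vartheta_a, \mathcal{A}|_\mathcal{B}) = \mathrm{CN}_\bullet(\vartheta_a).$$
Applying the lower row of the Main Theorem \ref{theorem main technical theorem} to the two vertices $0, a \in \mathcal{A}$, together with the identification $\underline{\mathrm{Nov}}(\mathcal{A}|_\mathcal{B}) = \underline{\mathrm{Nov}}(a)$ recorded in Lemma \ref{lemma 0 vertex trick} and Example \ref{example twisted morse as special case}, then gives a Novikov-linear chain homotopy equivalence
$$\mathrm{CN}_\bullet(\vartheta_a) \simeq \mathrm{CM}_\bullet(h, \underline{\mathrm{Nov}}(a)),$$
where $dh = \vartheta_0$. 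This is the heart of the argument and the place where the entire polytope machinery is cashed in.

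It remains to connect $\vartheta_a$ to $\alpha$ and $h$ to $f$. On the Novikov side, $\alpha$ and $\vartheta_a$ are cohomologous Morse representatives of $a$, so the standard invariance of ordinary Novikov homology under change of representative (recalled in Subsection \ref{subsection the chain complex....}) upgrades to a Novikov-linear chain homotopy equivalence $\mathrm{CN}_\bullet(\alpha) \simeq \mathrm{CN}_\bullet(\vartheta_a)$. On the twisted Morse side, a standard continuation between $\tilde{f}$ and $\tilde{h}$ lifted to $\widetilde{M}_a$ yields a $\Z[\Gamma_a]$-linear chain homotopy equivalence $\mathrm{CM}_\bullet(\tilde{f}) \simeq \mathrm{CM}_\bullet(\tilde{h})$, which after tensoring with $\mathrm{Nov}(a)$ and invoking Lemma \ref{lemma Morse-Eilenberg} becomes $\mathrm{CM}_\bullet(f, \underline{\mathrm{Nov}}(a)) \simeq \mathrm{CM}_\bullet(h, \underline{\mathrm{Nov}}(a))$. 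Concatenating these four Novikov-linear equivalences produces the claimed chain homotopy equivalence. The only real obstacle, once Theorems \ref{theorem perturbed section} and \ref{theorem main technical theorem} are taken for granted, is bookkeeping the Novikov-module structure through the chain of identifications; this is automatic since every continuation in Subsection \ref{subsection Non-exact deformatnios} is Novikov-linear by construction and the identifications $\Lambda_\mathcal{A} \cong \mathrm{Nov}(\mathcal{A})$ of Subsection \ref{subsection The twisted Novikov complex} are ring isomorphisms.
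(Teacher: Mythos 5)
Your argument for the core chain homotopy equivalence is correct and follows the paper's proof of Theorem \ref{theorem NMH Theorem} essentially step for step: set $\mathcal{A} = \langle 0, a \rangle$ and $\mathcal{B} = \langle a \rangle$, observe $\widetilde{M}_\mathcal{A} = \widetilde{M}_a$ via Example \ref{example polytope consisting of point}, identify $\mathrm{CN}_\bullet(\vartheta_a, \mathcal{A}|_{\mathcal{B}}) = \mathrm{CN}_\bullet(\vartheta_a)$, apply the Main Theorem/$0$-vertex trick to continue from $\vartheta_a$ to $\vartheta_0 = dh$, and then bridge $\vartheta_a \leadsto \alpha$ and $h \leadsto f$. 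Your choice to realize the twisted Morse continuation by lifting to $\widetilde{M}_a$, using a $\Z[\Gamma_a]$-linear continuation there, and tensoring with $\mathrm{Nov}(a)$ through Lemma \ref{lemma Morse-Eilenberg} is a mild variant of the paper's direct citation, but it lands in the same place.

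The gap is that what you have established, after passing to homology, is $\mathrm{HN}_\bullet(a) \cong \mathrm{HM}_\bullet(f, \underline{\mathrm{Nov}}(a))$ — an isomorphism with twisted \emph{Morse} homology. That is exactly the content of Theorem \ref{theorem NMH Theorem}. But the statement you were asked to prove is Corollary \ref{corollary NHT for real}, whose right-hand side is twisted \emph{singular} homology $H_\bullet(M, \underline{\mathrm{Nov}}(a))$. Your proof never mentions singular homology, and the closing sentence about "the claimed chain homotopy equivalence" refers to a chain complex the corollary never names. The paper derives the corollary from the theorem by appealing to an external result (Banyaga--Hurtubise--Spaeth, the reference cited just before Corollary \ref{corollary NHT for real}) asserting that twisted Morse homology computes singular homology with local coefficients, and the accompanying footnote is what guarantees that this final isomorphism is Novikov-linear rather than merely a group isomorphism. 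Without that last identification your argument proves the theorem but stops short of the corollary; you should at minimum record the step $\mathrm{HM}_\bullet(f, \underline{\mathrm{Nov}}(a)) \cong H_\bullet(M, \underline{\mathrm{Nov}}(a))$ as Novikov-modules and justify it.
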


\begin{remark}\label{remark proof for twisted NHT is novel}
This is a slightly different incarnation of the classical Novikov Morse Homology Theorem as Corollary \ref{corollary NHT for real} relates the Novikov homology to twisted singular homology rather than equivariant singular homology. Moreover, as the proof will show, we do \emph{not} invoke the Eilenberg Theorem (or its Morse analogue from the previous subsection) and instead produce a direct connection between the Novikov complex and the twisted Morse complex via the $0$-vertex trick -- this chain of arguments appears to be novel.
\end{remark}

\begin{proof}[Proof of Theorem \ref{theorem NMH Theorem}]
Pick $0, \,a$ in $H^1_{\mathrm{dR}}(M)$, set $\mathcal{A}:=\langle 0,a \rangle$,  and consider any section $$\theta \colon \mathcal{A} \to \Omega^1(M).$$ Up to perturbing $\theta$ we may assume that $\theta$ is a section $\vartheta^0$ (note that here $\mathcal{A}^0=\mathcal{A}$) as in Lemma \ref{lemma 0 vertex trick}. In particular, setting $\mathcal{B}=\langle a \rangle $ and invoking Lemma \ref{lemma 0 vertex trick} we get a Novikov chain homotopy equivalence
$$\mathrm{CN}_\bullet\left(\theta_a,\mathcal{A} |_a \right) \simeq \mathrm{CM}_\bullet \left(h, \underline{\mathrm{Nov}}(\mathcal{A} |_a) \right).$$
From Example \ref{example twisted morse as special case} we deduce that $\underline{\mathrm{Nov}}\left(\mathcal{A} |_a \right)=\underline{\mathrm{Nov}}(a)$, therefore we obtain
$$\mathrm{CN}_\bullet(\theta_a) \simeq \mathrm{CM}_\bullet(h,\underline{\mathrm{Nov}}(a))$$
as Novikov-modules. Twisted Morse homology, just as ordinary Morse homology, does not depend on the choice of Morse function. Indeed, using continuation methods one can prove $$\mathrm{CN}_\bullet(h,\underline{\mathrm{Nov}}(a)) \simeq \mathrm{CM}_\bullet(f,\underline{\mathrm{Nov}}(a))$$ as Novikov-modules.\footnote{See the proof of \cite[Theorem 3.9]{banyaga2019} for more details.} This proves 
$$\mathrm{CN}_\bullet(\theta_a) \simeq \mathrm{CM}_\bullet(f,\underline{\mathrm{Nov}}(a))$$ as Novikov-modules. Observing $\theta_a \in a$ and taking the homology on both sides completes the proof.

\end{proof}

\subsection{A polytope Novikov Principle}

Combining the two lemmata from Subsection \ref{subsection The 0-vertex trick and the Morse-Eilenberg Theorem} and the Main Theorem \ref{theorem main technical theorem} we will prove a \emph{polytope Novikov Principle} (see Theorem \ref{theorem Polytop Novikov Principle}). As a corollary we recover the ordinary Novikov Principle (cf. Corollary \ref{corollary ordinary NP}). In fact, the results here also cover those in the previous Subsection \ref{subsection The Novikov Morse Homology}. We opted to keep them apart to emphasise the novelty of the ``twisted" approach to the Novikov Homology Theorem (see Remark \ref{remark proof for twisted NHT is novel}).

The polytope Novikov Principle yields a new proof (in the abelian case) to a recent result due to Pajitnov \cite[Theorem 5.1]{Pajitnov2019}.

\begin{theorem}[Polytope Novikov Principle]\label{theorem Polytop Novikov Principle}
Let $\theta \colon \mathcal{A} \to \Omega^1(M)$ be any section and $\mathcal{B}\subseteq \mathcal{A}$ a subpolytope. Then there exists a perturbed section $\vartheta \colon \mathcal{A} \to \Omega^1(M)$ such that
\begin{equation}\label{equation chain homotopy Equiv}
\mathrm{CN}_\bullet\left(\vartheta_a,\mathcal{A} |_{\mathcal{B}} \right) \simeq C_\bullet\left(\widetilde{M}_\mathcal{A}\right)\otimes_{\Z[\Gamma_\mathcal{A}]}\mathrm{Nov}(\mathcal{A}|_\mathcal{B}), \quad \forall a \in \mathcal{A},
\end{equation}
as Novikov-modules.
\end{theorem}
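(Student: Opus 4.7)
The plan is to chain together three known ingredients. First, I would apply the \emph{0-vertex trick} of Lemma \ref{lemma 0 vertex trick} to the polytope $\mathcal{A}^0$ spanned by $0$ and the vertices of $\mathcal{A}$: this produces a perturbed section $\vartheta = \vartheta^0 \colon \mathcal{A}^0 \to \Omega^1(M)$ (whose restriction to $\mathcal{A}$ I still call $\vartheta$) together with a Novikov-linear chain homotopy equivalence
\begin{equation*}
\mathrm{CN}_\bullet\!\left(\vartheta_a,\mathcal{A}|_{\mathcal{B}}\right) \;\simeq\; \mathrm{CM}_\bullet\!\left(h,\underline{\mathrm{Nov}}(\mathcal{A}|_{\mathcal{B}})\right),\qquad \forall a\in \mathcal{A},
\end{equation*}
where $dh = \vartheta_0$. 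Here I am implicitly using that the Novikov ring, local system, and deck transformation group are unchanged by adjoining the $0$-vertex, which is exactly the content of Lemma \ref{lemma 0 vertex trick}.

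Second, I would invoke the Morse-Eilenberg Theorem (Lemma \ref{lemma Morse-Eilenberg}) for the Morse function $h$ and the polytope $\mathcal{A}$ to identify
\begin{equation*}
\mathrm{CM}_\bullet\!\left(h,\underline{\mathrm{Nov}}(\mathcal{A}|_{\mathcal{B}})\right) \;\cong\; \mathrm{CM}_\bullet(\tilde{h})\otimes_{\Z[\Gamma_{\mathcal{A}}]}\mathrm{Nov}(\mathcal{A}|_{\mathcal{B}}),
\end{equation*}
as chain complexes of $\mathrm{Nov}(\mathcal{A}|_{\mathcal{B}})$-modules, where $\tilde{h}=h\circ\pi$ is the lift to $\widetilde{M}_{\mathcal{A}}$. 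So far the composition gives a chain equivalence of the left-hand side of the target statement with $\mathrm{CM}_\bullet(\tilde{h})\otimes_{\Z[\Gamma_{\mathcal{A}}]}\mathrm{Nov}(\mathcal{A}|_{\mathcal{B}})$.

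Third, I need the classical equivariant identification
\begin{equation*}
\mathrm{CM}_\bullet(\tilde{h})\;\simeq\; C_\bullet\!\left(\widetilde{M}_{\mathcal{A}}\right) \quad \text{as chain complexes of free } \Z[\Gamma_{\mathcal{A}}]\text{-modules},
\end{equation*}
which comes from the handle decomposition of $M$ induced by $(h,g)$, lifted $\Gamma_{\mathcal{A}}$-equivariantly to the cover $\widetilde{M}_{\mathcal{A}}$: each critical point $x_m$ of $h$ contributes a handle indexed equivariantly by $\Gamma_{\mathcal{A}}\cdot\tilde{x}_m$, and the associated Morse-theoretic CW structure on $\widetilde{M}_{\mathcal{A}}$ produces an equivariant chain equivalence of the Morse complex with the cellular (hence singular, up to equivalence) chain complex. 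Since the source and target are free $\Z[\Gamma_{\mathcal{A}}]$-module complexes, tensoring $-\otimes_{\Z[\Gamma_{\mathcal{A}}]}\mathrm{Nov}(\mathcal{A}|_{\mathcal{B}})$ preserves the chain homotopy equivalence, yielding
\begin{equation*}
\mathrm{CM}_\bullet(\tilde{h})\otimes_{\Z[\Gamma_{\mathcal{A}}]}\mathrm{Nov}(\mathcal{A}|_{\mathcal{B}}) \;\simeq\; C_\bullet\!\left(\widetilde{M}_{\mathcal{A}}\right)\otimes_{\Z[\Gamma_{\mathcal{A}}]}\mathrm{Nov}(\mathcal{A}|_{\mathcal{B}}).
\end{equation*}
Concatenating the three equivalences produces \eqref{equation chain homotopy Equiv}.

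The main obstacle is verifying that every equivalence in the chain is genuinely $\mathrm{Nov}(\mathcal{A}|_{\mathcal{B}})$-linear and not merely an additive or $\Z[\Gamma_{\mathcal{A}}]$-linear one; this is straightforward for the first two steps because Lemma \ref{lemma 0 vertex trick} and Lemma \ref{lemma Morse-Eilenberg} already output Novikov-module maps, but for the equivariant identification in step three one must track that the CW/handle identification respects the $\Gamma_{\mathcal{A}}$-action so that tensoring over $\Z[\Gamma_{\mathcal{A}}]$ is functorial. A minor secondary subtlety is that for $a\in\mathcal{A}$ the section $\vartheta_a$ is guaranteed to be Morse-Smale by Theorem \ref{theorem perturbed section}, so the left-hand side is a well-defined chain complex in the first place; I would state this explicitly as the choice of perturbed section before beginning the chain of equivalences.
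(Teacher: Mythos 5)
Your proposal is correct and follows essentially the same three-step strategy as the paper's proof: the $0$-vertex trick (Lemma \ref{lemma 0 vertex trick}), the Morse--Eilenberg identification (Lemma \ref{lemma Morse-Eilenberg}), and the classical $\Z[\Gamma_{\mathcal{A}}]$-equivariant chain homotopy equivalence $\mathrm{CM}_\bullet(\tilde{h}) \simeq C_\bullet(\widetilde{M}_{\mathcal{A}})$ followed by tensoring over $\Z[\Gamma_{\mathcal{A}}]$ with $\mathrm{Nov}(\mathcal{A}|_{\mathcal{B}})$. The paper handles the final tensoring step just as you do, noting that $i\otimes\mathrm{id}$ and $j\otimes\mathrm{id}$ give a Novikov-linear chain homotopy equivalence because the base-change functor is additive.
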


Here $C_\bullet$ denotes the singular chain complex with $\Z$-coefficients.

\begin{proof}[Proof of Theorem \ref{theorem Polytop Novikov Principle} ]
Let $\theta^0 \colon \mathcal{A}^0 \to \Omega^1(M)$ be a section that extends $\theta$ with $\mathcal{A}^0$ the polytope generated by the vertices of $\mathcal{A}$ and $0$. By the $0$-vertex trick, i.e. Lemma \ref{lemma 0 vertex trick}, there exists a perturbed section $\vartheta^0 \colon \mathcal{A}^0 \to \Omega^1(M)$ such that
$$\mathrm{CN}_\bullet\left(\vartheta^0_a,\mathcal{A} |_{\mathcal{B}}\right) \simeq \mathrm{CM}_\bullet\left(h,\underline{\mathrm{Nov}}(\mathcal{A} |_{\mathcal{B}}) \right), \quad \forall a \in \mathcal{A}$$ as Novikov-modules. Combining this with Lemma \ref{lemma Morse-Eilenberg} we obtain
$$\mathrm{CN}_\bullet\left( \vartheta^0_a,\mathcal{A} |_{\mathcal{B}} \right) \simeq \mathrm{CM}_\bullet\left( \tilde{h} \right) \otimes_{\Z[\Gamma_\mathcal{A}]} \mathrm{Nov}(\mathcal{A} |_{\mathcal{B}}), \quad \forall a \in \mathcal{A}$$ as Novikov-modules. From standard Morse theory we know that the Morse chain complex $\mathrm{CM}_\bullet\left(\tilde{h} \right)$ is chain homotopy equivalent over $\Z[\Gamma_{\mathcal{A}}]$ to the singular chain complex $C_\bullet\left( \widetilde{M}_\mathcal{A} \right)$, see for instance \cite[Page 415]{Pajitnov2016} and \cite[Appendix]{Pajitnov1996}. Denote by
$$i \colon \mathrm{CM}_\bullet \left(\tilde{h} \right) \longrightarrow C_\bullet\left(\widetilde{M}_\mathcal{A} \right), \; j \colon C_\bullet\left(\widetilde{M}_\mathcal{A} \right) \longrightarrow \mathrm{CM}_\bullet \left( \tilde{h} \right)$$ such a chain homotopy equivalence. Then one can easily check that $i \otimes \mathrm{id}_{\mathrm{Nov}(\mathcal{A} |_\mathcal{B})}$ and $j \otimes \mathrm{id}_{\mathrm{Nov}(\mathcal{A} |_\mathcal{B})}$ define a Novikov-linear chain homotopy equivalence $$ \mathrm{CM}_\bullet \left(\tilde{h} \right) \otimes_{\Z[\Gamma_{\mathcal{A}}]} \mathrm{Nov}(\mathcal{A} |_{\mathcal{B}})  \simeq C_\bullet\left(\widetilde{M}_\mathcal{A} \right) \otimes_{\Z[\Gamma_{\mathcal{A}}]} \mathrm{Nov}(\mathcal{A} |_{\mathcal{B}}).\footnote{One can also deduce from category theory by observing that the functor $F \colon \mathsf{mod}_{\Z[\mathcal{A}]} \to \mathsf{mod}_{\mathrm{Nov}(\mathcal{A} |_{\mathcal{B}})}$, $$F(O):=O \otimes_{\Z[\Gamma_{\mathcal{A}}]} \mathrm{Nov}(\mathcal{A} |_{\mathcal{B}}),\, F(i)=i \otimes \mathrm{id}, \quad  O,P \in \mathsf{obj}\left(\mathsf{mod}_{\Z[\Gamma_{\mathcal{A}}]}\right), \, i \in \hom(O,P)$$ is additive. }$$  
Hence
$$\mathrm{CN}_\bullet\left( \vartheta^0_a,\mathcal{A} |_{\mathcal{B}} \right) \simeq \mathrm{CM}_\bullet\left( \tilde{h} \right) \otimes_{\Z[\Gamma_\mathcal{A}]} \mathrm{Nov}(\mathcal{A} |_{\mathcal{B}}) \simeq C_\bullet\left(\widetilde{M}_\mathcal{A} \right) \otimes_{\Z[\Gamma_{\mathcal{A}}]} \mathrm{Nov}(\mathcal{A} |_{\mathcal{B}}), \quad \forall a \in \mathcal{A}.$$
Setting $\vartheta:=\vartheta^0 |_{\mathcal{A}}$ finishes the proof.

\end{proof}

\begin{remark}\label{remark PNP for small sections is GOOD}
If one is interested in a particular Morse form $\omega$, then the following improvement can be made: let $(\omega,g)$ be Morse-Smale and assume that $\mathcal{A}$ is a polytope around $[\omega]$ that admits a section $\theta \colon \mathcal{A} \to \Omega^1(M)$ sufficiently close to $(\omega,g)$ in the sense of Corollary \ref{corollary other chain complex perturbed}. Denote by $\vartheta^\omega \colon \mathcal{A} \to \Omega^1(M)$ the associated perturbation with reference pair $(\omega,g)$. The $\vartheta=\vartheta^0 |_{\mathcal{A}}$ section in the proof above might come from an exact reference pair since $0$ could a priori be far away from $\theta$. However, Corollary \ref{corollary independence of data} asserts
$$\mathrm{CN}(\vartheta^{\omega}_a,\mathcal{A} |_{\mathcal{B}}) \simeq \mathrm{CN}(\vartheta_a,\mathcal{A} |_{\mathcal{B}}), \quad \forall a \in \mathcal{A}.$$ Combining this with \eqref{equation chain homotopy Equiv} for $a=[\omega]$ gives
\begin{equation*}
\mathrm{CN}_\bullet(\omega,\mathcal{A}|_{\mathcal{B}}) \simeq C_\bullet\left( \widetilde{M}_\mathcal{A}\right) \otimes_{\Z[\Gamma_{\mathcal{A}}]} \mathrm{Nov}(\mathcal{A} |_{\mathcal{B}})
\end{equation*}
since $\vartheta^\omega_{[\omega]}=\omega$.
\end{remark}

In the special case of ordinary Novikov theory, i.e. $\mathcal{A}=\langle a \rangle$ and $\mathcal{A}^0=\langle 0,a\rangle$, Theorem \ref{theorem Polytop Novikov Principle} reduces to the ordinary Novikov Principle:

\begin{corollary}(Ordinary Novikov Principle)\label{corollary ordinary NP}
Let $(\alpha,g)$ be Morse-Smale. Then
$$\mathrm{CN}_\bullet(\alpha) \simeq C_\bullet\left(\widetilde{M}_a\right) \otimes_{\Z[\Gamma_a]} \mathrm{Nov}(a).$$
\end{corollary}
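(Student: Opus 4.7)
The plan is to specialize the Polytope Novikov Principle (Theorem \ref{theorem Polytop Novikov Principle}) to the one-vertex polytope $\mathcal{A} = \langle a \rangle$ with $\mathcal{B} = \mathcal{A}$, and then identify every object on both sides with its ordinary-Novikov counterpart.

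First I would unpack the polytope data. Since $\mathcal{A} = \langle a \rangle$, Example \ref{example polytope consisting of point} gives $\widetilde{M}_\mathcal{A} = \widetilde{M}_a$, hence $\Gamma_\mathcal{A} = \Gamma_a$ and $\Z[\Gamma_\mathcal{A}] = \Z[\Gamma_a]$. For the subpolytope $\mathcal{B} = \langle a \rangle$, the polytope Novikov ring collapses to the ordinary one: Proposition \ref{proposition novikov ring GENERAL agree} combined with Proposition \ref{proposition novikov rings match} yields $\mathrm{Nov}(\mathcal{A}|_\mathcal{B}) \cong \Lambda_a \cong \mathrm{Nov}(a)$. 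Finally, the multi finiteness condition of Definition \ref{def novikov chain group with polytope} reduces here to the single finiteness condition defining the ordinary chain group $\mathrm{CN}_\bullet(\alpha)$.

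Next I need to match the left-hand side of \eqref{equation chain homotopy Equiv} with $\mathrm{CN}_\bullet(\alpha)$ on the nose, not merely up to chain homotopy. For this I would take the trivial section $\theta \colon \mathcal{A} \to \Omega^1(M)$, $\theta_a = \alpha$, and invoke Remark \ref{remark PNP for small sections is GOOD} with reference Morse-Smale pair $(\omega, g) = (\alpha, g)$. The output is a perturbed section $\vartheta^\alpha$ satisfying $\vartheta^\alpha_a = \alpha$, so that $\mathrm{CN}_\bullet(\vartheta^\alpha_a, \mathcal{A}|_\mathcal{B}) = \mathrm{CN}_\bullet(\alpha)$. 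Feeding this into the Polytope Novikov Principle gives the desired Novikov-linear chain homotopy equivalence
\begin{equation*}
\mathrm{CN}_\bullet(\alpha) \simeq C_\bullet(\widetilde{M}_a) \otimes_{\Z[\Gamma_a]} \mathrm{Nov}(a).
\end{equation*}

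There is no serious obstacle here: the proof is essentially a dictionary translation between polytope-level and ordinary-Novikov objects. The only subtlety worth flagging is the need to keep the originally prescribed Morse form $\alpha$ on the left-hand side, which is precisely why one uses the refinement in Remark \ref{remark PNP for small sections is GOOD} with the non-exact reference pair $(\alpha, g)$ rather than Theorem \ref{theorem Polytop Novikov Principle} directly; otherwise the perturbation would merely yield a representative cohomologous to $\alpha$, and an additional appeal to the standard invariance of ordinary Novikov homology under the choice of Morse representative would be required to close the gap.
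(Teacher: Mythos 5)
Your proposal is correct and follows essentially the same route as the paper: specializing the Polytope Novikov Principle to $\mathcal{A}=\mathcal{B}=\langle a\rangle$ with the trivial section $\theta_a=\alpha$, and invoking Remark \ref{remark PNP for small sections is GOOD} with reference pair $(\alpha,g)$ to pin down $\vartheta_{[\alpha]}=\alpha$ on the nose. The paper's proof is terser but uses exactly these ingredients; your added explanation of the identifications $\widetilde{M}_\mathcal{A}=\widetilde{M}_a$, $\Gamma_\mathcal{A}=\Gamma_a$, and $\mathrm{Nov}(\mathcal{A}|_\mathcal{B})\cong\mathrm{Nov}(a)$ simply makes explicit what the paper leaves implicit.
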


\begin{proof}
Set $a=[\alpha]$, pick $\mathcal{A}=\langle a \rangle$, $\mathcal{B}=\mathcal{A}$ and $\theta \colon \mathcal{A} \to \Omega^1(M)$ the smooth section defined by $\theta_a=\alpha$. The section $\theta$ is obviously close to $(\alpha,g)$, thus Theorem \ref{theorem Polytop Novikov Principle} and Remark \ref{remark PNP for small sections is GOOD} imply $$\mathrm{CN}_\bullet(\alpha) \simeq C_\bullet\left(\widetilde{M}_a \right) \otimes_{\Z[\Gamma_a]} \mathrm{Nov}(a)$$ as Novikov-modules. 

\end{proof}
Even though it is hidden in the proof above, the main idea is still to use perturbations $\vartheta^0 \colon \mathcal{A}^0 \to \Omega^1(M)$ associated to an exact reference pair $(H,g)$. Recall that these sections $\vartheta^0$ are constructed by a ``shift-and-scale" procedure so that each $\vartheta^0_a$ is dominated by the exact term $\frac{1}{\varepsilon} dH$. This strategy to recover the ordinary Novikov Principle has been known among experts for quite awhile, see \cite[Page 302]{pajitnov1995} for a historical account, \cite[Page 548]{Ono2005} and \cite[Theorem 3.5.2]{li2009}. However, our approach is slightly different as it does not make use of gradient like vector fields.

We conclude the present subsection by explaining how to recover \cite[Theorem 5.1]{Pajitnov2019} from the polytope Novikov principle. For the reader's convenience we briefly recall Pajitnov's setting, keeping the notation as close as possible to \cite{Pajitnov2019}. Fix a Morse-Smale pair $(\omega,g)$ on $M$ and let $p \colon \widehat{M} \to M$ be a regular cover such that $p^*[\omega]=0$. Denote by $r$ the rank of $\omega$\footnote{The rank of a cohomology class $a \in H^1_{\mathrm{dR}}(M)$ is defined as $\mathrm{rank}_{\Z}\left(\mathrm{im} \, \Phi_a\right)$.} and define $G=\mathrm{Deck}(\widehat{M})$. Viewing the period homomorphism $\Phi_\omega$ on $H_1(M;\Z)$ we get a splitting $$H_1(M;\Z) \cong \Z^r \oplus \ker[\omega].$$ Pajitnov calls a family of homomorhism $$\Psi_1,\dots, \Psi_r \colon \Z^r \to\Z$$ a $\Phi_\omega$-regular family if
\begin{itemize}
\item the $\Psi_i$ span $\hom_\Z(\Z^r,\Z)$ and
\item the coordinates of $\Phi_\omega \colon \Z^r \to \R$ in the basis $\Psi_i$ are strictly positive.
\end{itemize}
We shall call $\Psi=\lbrace \Psi_i \rbrace$ a $\Phi_\omega$-\emph{semi-regular} family whenever the first bullet point above is satisfied. One should not be fooled by the length of name -- the existence of a semi-regular family is obvious and merely an algebraic statement.

To every (semi)-regular family $\Psi=\{\Psi_1,\dots,\Psi_r\}$ we associate the conical Novikov ring
$$\widehat{\Lambda}_\Psi=\bigcap_{i=1}^r \widehat{\Z}[G]^{\Psi_i}.$$ The conical Novikov chain complex $(\mathcal{N}_\bullet(\omega),\partial)$ is defined as
$$\mathcal{N}_i(\omega,\Psi)=\mathcal{N}_i(\omega)=\bigoplus_{x \in Z_i(\omega)} \widehat{\Lambda}_{\Psi} \langle x \rangle,$$ where the boundary operator $\partial$ is defined as expected: fix preferred lifts $\hat{x}$ of each $x \in Z(\omega)$ and define the $y$-component of $\partial x$ by the (signed) count of $\hat{f}_\omega$-Morse flow lines on the cover $\widehat{M}$ from $\hat{x}$ to $g \circ \hat{y}$ for all $g \in G$.\footnote{The definition in \cite{Pajitnov2019} is slightly more general, as they define the count with respect to any transverse $\omega$-gradient.} Pajitnov proves that there always exists a $\Phi_\omega$-regular family $\Psi$ so that $(\mathcal{N}_\bullet(\omega),\partial)$ is a well defined $\widehat{\Lambda}_\Psi$-module chain complex and shows:

\begin{theorem}[Pajitnov 2019, \cite{Pajitnov2019}]\label{theorem Pajitnov}
For any Morse-Smale pair $(\omega,g)$ there exists a $\Phi_\omega$-regular family $\Psi$ such that $\mathcal{N}_\bullet(\omega,\Psi)=\mathcal{N}_\bullet(\omega)$ is a well defined chain complex. Moreover, for any such $\Psi$ it holds
$$\mathcal{N}_\bullet(\omega)  \simeq C_\bullet\left(\widehat{M}\right) \otimes_{\Z[G]} \widehat{\Lambda}_{\Psi}$$ as $\widehat{\Lambda}_{\Psi}$-modules.
\end{theorem}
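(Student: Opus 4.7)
The plan is to reinterpret Pajitnov's conical Novikov complex $\mathcal{N}_\bullet(\omega)$ as a polytope Novikov complex $\mathrm{CN}_\bullet(\omega, \mathcal{A}|_\mathcal{B})$ for a suitably chosen $\mathcal{A}$ and $\mathcal{B}$, and then to invoke the polytope Novikov Principle (Theorem \ref{theorem Polytop Novikov Principle}). First I would set up the dictionary: each homomorphism $\Psi_i \colon \Z^r \to \Z$ extends via $\pi_1(M) \twoheadrightarrow G \twoheadrightarrow \Z^r \xrightarrow{\Psi_i} \R$ to a period homomorphism, and so determines a cohomology class $a_i \in H^1(M;\R) \subseteq H^1_{\mathrm{dR}}(M)$. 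Since we are in the abelian case, $G$ is a finitely generated quotient of $H_1(M;\Z)$, so I can adjoin further classes $a_{r+1},\ldots,a_k$ whose period homomorphisms together with the $\Psi_i$ separate the free part of $G$, so that $\bigcap_{l=1}^k \ker(a_l) = \ker(p_*)$ in $\pi_1(M)$. Setting $\mathcal{A} := \langle 0, a_1,\ldots,a_k\rangle$ (invoking Lemma \ref{lemma 0 vertex trick}) and $\mathcal{B} := \langle a_1,\ldots,a_r\rangle$, this gives $\widetilde{M}_\mathcal{A} = \widehat{M}$, $\Gamma_\mathcal{A} = G$, and by Lemma \ref{lemma vertex determien CN polytope} applied to the Novikov ring, $\mathrm{Nov}(\mathcal{A}|_\mathcal{B}) = \bigcap_{i=1}^r \widehat{\Z}[G]^{\Psi_i} = \widehat{\Lambda}_\Psi$.

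Next I would place $[\omega]$ inside $\mathcal{A}$: the regularity hypothesis gives $[\omega] = \sum c_i a_i$ with $c_i > 0$, and after uniformly dilating the $a_i$ by a sufficiently large positive scalar (which leaves both $\widetilde{M}_\mathcal{A}$ and the completions $\widehat{\Z}[G]^{\Psi_i}$ unchanged, since positive dilation of a period homomorphism induces the identical finiteness condition) I may assume $\sum c_i \leq 1$, whence $[\omega]$ is a convex combination of $0, a_1,\ldots,a_k$. Choosing any section $\theta \colon \mathcal{A} \to \Omega^1(M)$ with $\theta_{[\omega]} = \omega$ and feeding it to Theorem \ref{theorem Polytop Novikov Principle} produces a perturbed section $\vartheta$ together with a Novikov-linear chain homotopy equivalence
$$\mathrm{CN}_\bullet(\vartheta_{[\omega]}, \mathcal{A}|_\mathcal{B}) \;\simeq\; C_\bullet(\widetilde{M}_\mathcal{A}) \otimes_{\Z[\Gamma_\mathcal{A}]} \mathrm{Nov}(\mathcal{A}|_\mathcal{B}) \;=\; C_\bullet(\widehat{M}) \otimes_{\Z[G]} \widehat{\Lambda}_\Psi.$$
Direct inspection of definitions identifies the left-hand side with Pajitnov's $\mathcal{N}_\bullet(\vartheta_{[\omega]})$: the generators are the preferred lifts of $Z(\vartheta_{[\omega]})$ to $\widehat{M}$, the boundary counts $-\nabla^{g_{\vartheta_{[\omega]}}}\vartheta_{[\omega]}$-trajectories upstairs, and the multi-finiteness is precisely against $\Psi_1,\ldots,\Psi_r$. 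A standard continuation argument over $\widehat{M}$ between the cohomologous Morse-Smale pairs $(\omega,g)$ and $(\vartheta_{[\omega]}, g_{\vartheta_{[\omega]}})$ then yields $\mathcal{N}_\bullet(\omega) \simeq \mathcal{N}_\bullet(\vartheta_{[\omega]})$, and composing gives the desired $\mathcal{N}_\bullet(\omega) \simeq C_\bullet(\widehat{M}) \otimes_{\Z[G]} \widehat{\Lambda}_\Psi$. Well-definedness of $\mathcal{N}_\bullet(\omega)$ for the original pair $(\omega,g)$, the other half of Pajitnov's statement, then follows a posteriori from this chain homotopy equivalence.

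The main obstacle is the construction of the auxiliary vertices $a_{r+1},\ldots,a_k$ needed to enlarge the cover controlled by the $\Psi_i$ alone into the given cover $\widehat{M}$: one needs $\bigcap_l \ker(a_l) = \ker(p_*)$, which requires harvesting enough $\R$-valued characters of $G$, and this is precisely where the abelian hypothesis is essentially used (any torsion part of $G$ is invisible to such characters and would obstruct the identification). A secondary bookkeeping matter is arranging the dilations so that $[\omega]$ sits inside $\mathcal{A}$ while preserving the Novikov ring data; fortunately both uniform dilation and the $0$-vertex trick of Lemma \ref{lemma 0 vertex trick} act trivially on all the relevant finiteness conditions, so no information is lost.
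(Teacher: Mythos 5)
This is Pajitnov's theorem, which the paper quotes from \cite{Pajitnov2019} and then recovers in the abelian case via Corollary \ref{corollary pajitnov NP}; that corollary is the only proof the paper supplies, so that is what your proposal must be measured against. Your plan is structurally similar (translate the conical complex into a polytope Novikov complex, then invoke Theorem \ref{theorem Polytop Novikov Principle}), but you make a genuinely different choice of polytope, and that choice creates a gap.

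The paper builds the polytope by \emph{rationally approximating} $\omega$: it manufactures classes $b_1,\dots,b_r$ and a section $\theta$ with $\Vert\theta_a-\omega\Vert<\tfrac{C_\omega}{D\cdot 1000}$ for all $a\in\mathcal{A}$, so that Corollary \ref{corollary other chain complex perturbed} applies with the \emph{non-exact} reference pair $(\omega,g)$. The crucial payoff is $\vartheta^\omega_{[\omega]}=\omega$ on the nose (because Lemma \ref{lemma perturbations} leaves the reference form fixed), and then Remark \ref{remark PNP for small sections is GOOD} delivers the equivalence directly for $\mathrm{CN}_\bullet(\omega,\mathcal{A}|_\mathcal{B})=\mathcal{N}_\bullet(\omega)$. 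No continuation between $\omega$ and a perturbed form is ever needed, and well-definedness of the conical complex for $(\omega,g)$ comes for free from Corollary \ref{corollary other chain complex perturbed}. (Note in passing that the paper's corollary only produces a \emph{semi}-regular family and proves the equivalence for that constructed family; it does not claim the full ``for any such $\Psi$'' generality of Pajitnov's statement.)

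Your proposal instead extends the given $\Psi_i$ to classes $a_i$, adjoins $0$ and further classes, and then uses Theorem \ref{theorem Polytop Novikov Principle} through the $0$-vertex trick — hence an \emph{exact} reference pair $(H,g)$ and the ``shift-and-scale'' perturbation of Theorem \ref{theorem perturbed section}. With that machinery $\vartheta_{[\omega]}$ is \emph{not} equal to $\omega$; it is a perturbation dominated by $\tfrac{1}{\varepsilon}dH$. You then assert a ``standard continuation argument over $\widehat{M}$'' to get $\mathcal{N}_\bullet(\omega)\simeq\mathcal{N}_\bullet(\vartheta_{[\omega]})$. This is the gap. For conical/polytope Novikov complexes, continuations are decidedly not standard: the entire content of Theorem \ref{theorem independence of chain complexes} is that continuation maps respect the multi-finiteness condition, and the proof relies on the interpolating forms all agreeing with a fixed reference near its zeros (so that the troublesome integral $\int_\gamma(\vartheta^s_b-\vartheta^s_a)$ has no contribution near $Z$). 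That argument is available only for forms in the image of a single perturbed section $\vartheta$; it does not apply to the pair $(\omega,\vartheta_{[\omega]})$, because $\omega$ is not a value of $\vartheta$ and need not agree with $\vartheta_{[\omega]}$ near $Z(\vartheta_{[\omega]})=\mathrm{Crit}(H)$. Moreover, your final sentence — that well-definedness of $\mathcal{N}_\bullet(\omega)$ follows \emph{a posteriori} from the equivalence — is circular: the continuation map you used to produce that equivalence already presupposes $\mathcal{N}_\bullet(\omega)$ is a chain complex with a well-defined boundary respecting the conical finiteness condition. This is exactly what the paper avoids by making $\omega$ itself a value of the perturbed section.
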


Using Theorem \ref{theorem Polytop Novikov Principle} we recover Theorem \ref{theorem Pajitnov} in the abelian case.

\begin{corollary}\label{corollary pajitnov NP}
Let $(\omega,g)$ be a Morse-Smale pair, $p \colon \widehat{M} \to M$ be an abelian regular cover with $p^*[\omega]=0$. 

Then there exists $\Phi_\omega$-semi-regular family $\Psi$, a section $\theta \colon \mathcal{A} \to \Omega^1(M)$ around $[\omega]$ with $\widetilde{M}_\mathcal{A}=\widehat{M}$, and a perturbation $\vartheta \colon \mathcal{A} \to \Omega^1(M)$ such that
$$\mathrm{CN}_\bullet(\vartheta_a,\mathcal{A}|_{\mathcal{B}}) \simeq C_\bullet \left( \widehat{M} \right) \otimes_{\Z[G]} \widehat{\Lambda}_{\Psi}, \quad \forall a \in \mathcal{A}$$ as Novikov-modules with $\vartheta_{[\omega]}=\omega$. In particular,
$$\mathrm{CN}_\bullet(\omega,\mathcal{A}|_{\mathcal{B}}) \simeq C_\bullet \left( \widehat{M} \right) \otimes_{\Z[G]} \widehat{\Lambda}_{\Psi}.$$
\end{corollary}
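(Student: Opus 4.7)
The plan is to reduce the claim to the polytope Novikov Principle (Theorem \ref{theorem Polytop Novikov Principle}), or rather to its refinement in Remark \ref{remark PNP for small sections is GOOD}, by choosing the polytope $\mathcal{A}$ and the subpolytope $\mathcal{B}$ so that $\Gamma_\mathcal{A}$ is identified with $G$ and the restricted Novikov ring $\mathrm{Nov}(\mathcal{A}|_{\mathcal{B}})$ is identified with $\widehat{\Lambda}_\Psi$ on the nose.

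First, I would produce the algebraic ingredients. The existence of a $\Phi_\omega$-semi-regular family $\Psi = \{\Psi_1,\dots,\Psi_r\}$ is automatic, since $\hom_\Z(\Z^r,\Z)$ is free abelian of rank $r$ and any $\Z$-basis of the dual will do. Using the splitting $H_1(M;\Z) \cong \Z^r \oplus \ker[\omega]$, I would extend each $\Psi_i$ by zero on the second summand to a homomorphism $\widetilde{\Psi}_i \colon H_1(M;\Z) \to \Z$, and then view it as an integral cohomology class $a_i \in H^1(M;\Z) \hookrightarrow H^1_{\mathrm{dR}}(M)$ with $\Phi_{a_i} = \widetilde{\Psi}_i \circ \mathrm{hur}$.

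Second, I would set $\mathcal{A} := \langle [\omega], a_1, \dots, a_r \rangle$ and $\mathcal{B} := \langle a_1, \dots, a_r \rangle$. Since the $\Psi_i$ span $\hom_\Z(\Z^r,\Z)$, a direct calculation gives $\bigcap_i \ker(\widetilde{\Psi}_i) = \ker[\omega]$ inside $H_1(M;\Z)$ and hence $\bigcap_l \ker(a_l) = \ker[\omega]$ inside $\pi_1(M)$. Consequently $\Gamma_\mathcal{A} \cong \Z^r$, and the period homomorphism of $a_i$ on $\Gamma_\mathcal{A}$ coincides with $\Psi_i$ under this identification. By the obvious ring analogue of Lemma \ref{lemma vertex determien CN polytope} this produces
\[
\mathrm{Nov}(\mathcal{A}|_{\mathcal{B}}) \cong \bigcap_{i=1}^r \widehat{\Z}[\Gamma_\mathcal{A}]^{a_i} \cong \bigcap_{i=1}^r \widehat{\Z}[\Z^r]^{\Psi_i} = \widehat{\Lambda}_\Psi.
\]
I would then choose a smooth section $\theta \colon \mathcal{A} \to \Omega^1(M)$ with $\theta_{[\omega]} = \omega$ that is sufficiently close to $(\omega, g)$ in the operator norm to satisfy the hypothesis of Corollary \ref{corollary other chain complex perturbed} with reference pair $(\omega, g)$; this is arranged by rescaling the vertices $a_i$ (which does not change $\Gamma_\mathcal{A}$ or the Novikov rings above, by ray-invariance) and interpolating linearly. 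Let $\vartheta = \vartheta^\omega$ denote the resulting perturbation, which still has $\vartheta_{[\omega]} = \omega$. Remark \ref{remark PNP for small sections is GOOD} then yields
\[
\mathrm{CN}_\bullet(\omega, \mathcal{A}|_{\mathcal{B}}) \simeq C_\bullet(\widetilde{M}_\mathcal{A}) \otimes_{\Z[\Gamma_\mathcal{A}]} \mathrm{Nov}(\mathcal{A}|_{\mathcal{B}}),
\]
and substituting $\Gamma_\mathcal{A} \cong G$, $\widetilde{M}_\mathcal{A} = \widehat{M}$, and $\mathrm{Nov}(\mathcal{A}|_{\mathcal{B}}) \cong \widehat{\Lambda}_\Psi$ completes the argument.

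The hard part will be the cover identification $\widetilde{M}_\mathcal{A} = \widehat{M}$. The construction above lands on the minimal abelian cover on which $\omega$ becomes exact, corresponding to $G \cong \Z^r$. For a more general abelian cover with $G \cong \Z^r \oplus G_0$ and $G_0 \neq 0$, one must enlarge $\mathcal{A}$ by additional vertices whose period homomorphisms cut out the remaining $G_0$ inside $\ker[\omega]$, while keeping $\mathcal{B}$ unchanged so that $\mathrm{Nov}(\mathcal{A}|_{\mathcal{B}})$ continues to match $\widehat{\Lambda}_\Psi$. This works cleanly when $G_0$ is free abelian, but genuine torsion in $G$ cannot be detected by classes in $H^1_{\mathrm{dR}}(M)$ and has to be excluded (or handled by replacing integer coefficients by a field killing the torsion); this caveat presumably sits behind the phrase ``in the abelian case''.
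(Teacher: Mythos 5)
The overall reduction to Theorem \ref{theorem Polytop Novikov Principle} and Remark \ref{remark PNP for small sections is GOOD} is the right strategy, and your computations of $\Gamma_\mathcal{A}$ and $\mathrm{Nov}(\mathcal{A}|_\mathcal{B})$ are fine as far as they go. The gap is in the geometric step where you produce a section $\theta\colon\mathcal{A}\to\Omega^1(M)$ with $\theta_{[\omega]}=\omega$ satisfying the smallness hypothesis $\Vert\theta_a-\omega\Vert<C_\omega/(D\cdot1000)$ of Corollary \ref{corollary other chain complex perturbed}. Rescaling the dual classes $a_i$ to $\varepsilon a_i$ moves those vertices toward $0\in H^1_{\mathrm{dR}}(M)$, not toward $[\omega]$. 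For any section, $\theta_{\varepsilon a_i}-\omega$ is a closed one-form representing the \emph{fixed} nonzero class $\varepsilon a_i-[\omega]$, so $\Vert\theta_{\varepsilon a_i}-\omega\Vert$ is bounded below by the infimum of $\Vert\cdot\Vert$ over that class, and as $\varepsilon\to 0$ this lower bound tends to the corresponding (positive) infimum for the class $-[\omega]$. Hence the required smallness can never be achieved while keeping $\theta_{[\omega]}=\omega$, for any choice of $\varepsilon>0$. Ray-invariance of the ring, $\widehat{\Z}[G]^{a_l}=\widehat{\Z}[G]^{\varepsilon a_l}$, is true but only controls the finiteness condition; it has no bearing on whether the section is near the reference pair $(\omega,g)$, which is what Corollary \ref{corollary other chain complex perturbed} demands.

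The paper fixes exactly this by a rational approximation step taken from Pajitnov. Instead of your $a_l$, it uses classes $b_l\in H^1(M;\Q)$ within $\varepsilon$ of $[\omega]$, obtained by perturbing $\omega$ by small real multiples of fixed representatives $\alpha_l$ and choosing the coefficients so the resulting class becomes rational (and the $b_l$ pairwise distinct). Since all vertices $[\omega],b_1,\dots,b_r$ are $\varepsilon$-close to $[\omega]$, one can interpolate a section $\theta^-$ with $\theta^-_{[\omega]}=\omega$, $\theta^-_{b_l}=\beta_l$ and $\Vert\theta^-_a-\omega\Vert<\varepsilon$ for every $a$, so Corollary \ref{corollary other chain complex perturbed} applies with reference pair $(\omega,g)$. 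Integrality of the semi-regular family is then recovered by clearing denominators, $\Psi_l:=\Phi_{q b_l}$, which leaves the upward completions unchanged, $\widehat{\Z}[G]^{b_l}=\widehat{\Z}[G]^{q b_l}$. So the missing idea is to approximate $[\omega]$ rationally near $[\omega]$, rather than shrink a dual basis toward $0$. Your caveat about torsion in $G$ is fair, but it is not the distinguishing issue: the paper carries the same implicit restriction, since $\Gamma_\mathcal{A}$ is always torsion-free.
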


\begin{proof}

First of all we construct the polytope $\mathcal{A}$ and a small section $\theta\colon \mathcal{A} \to \Omega^1(M)$ by adapting a rational approximation idea due to Pajitnov \cite{Pajitnov1996}, see also \cite[Section 4.2]{Schütz2002}.
Consider the splitting from before 
$$H_1(M;\Z) \cong \Z^r \oplus \ker[\omega],$$ where $r$ is the rank of $[\omega]$. Pick $r$-many generators $\gamma_1,\dots,\gamma_r$ of the first summand above. By de Rham's Theorem there $r$-many pairwise distinct integral classes $a_1,\dots,a_r$ dual to $\gamma_1,\dots,\gamma_r$ such that 
$$\ker(a_l) \supset \ker[\omega], \quad \forall l=1,\dots,r.$$ In particular, we can write $$\Phi_\omega= \sum_{l=1}^r u_l \cdot \Phi_{a_l},$$ for a unique vector $ \underline{u}=(u_1,\dots,u_r) \in \R^r$, hence $[\omega]=\sum_{l=1}^r u_l \cdot a_l$. Thus, for our fixed Morse representative $\omega \in [\omega]$ there exist $\alpha_l \in a_l$ with
\begin{equation}\label{equation sum and kernel}
\omega=\sum_{l=1}^r u_l \cdot \alpha_l \text{ and }  \ker[\omega]=\bigcap_{l=1}^r \ker(a_l).
\end{equation} 
For fixed $\varepsilon>0$ we construct $r$-many \emph{rational} closed one-forms $\beta_l$ that are $\varepsilon$-close to $\omega$ (in the operator norm induced by $g$). Pick a sufficiently small vector $\underline{v}^1=\underline{v}^1(\varepsilon) \in \R^r$ such that for
$$\beta_1=\omega+\sum_{l=1}^r v^1_l \cdot \alpha_l$$ we have
\begin{equation*}
\Vert \omega - \beta_1 \Vert \leq \sum_{l=1}^r \vert v^1_l \vert  \cdot \Vert \alpha_l \Vert < \varepsilon \text{ and } b_1:=[\beta_1] \in H^1(M;\Q).
\end{equation*}
This is possible since $\Q$ is dense in $\R$. Define 
$$\beta_2=\omega+\sum_{l=1}^r v_l^2 \cdot \alpha_l, \, \underline{v}^2 \in \R^r,$$
satisfying the same properties with $$\underline{v}^2-\underline{v}^1=(\underbrace{v^2_1-v^1_1}_{\neq 0},0,\dots,0).$$ In particular, $$\beta_2-\beta_1=(v_1^2-v_1^1) \cdot \alpha_1,$$ which implies $b_1 \neq b_2$. Proceeding inductively (e.g. $\underline{v}^3$ agreeing with $\underline{v}^2$ except for the second entry etc.) we end up with $r$-many rational one-forms $\beta_1,\dots,\beta_r$ satisfying
\begin{itemize}
\item $\Vert \omega - \beta_j \Vert \leq \sum_{l=1}^{r} \vert v^j_l \vert \cdot \Vert \alpha_l \Vert < \varepsilon$ for all $j=1,\dots,k$,
\item  $b_l \neq b_j$ for all $l \neq j$. 
\item $\ker(b_l) \supset \ker[\omega]$ for all $l=1,\dots,k$, by \eqref{equation sum and kernel}.
\end{itemize}
Since all $b_l$ are rational, there exists a positive integer $q \in \N$ so that every cohomology class $q \cdot b_l$ is integral. From the bullet points above we thus conclude that $$\Psi:=\{ \Phi_{q \cdot b_l}\}$$ defines a $\Phi_\omega$-semi-regular family. Next set $\mathcal{A}^-=\langle [\omega],b_1,\dots,b_r \rangle$ and  $$\varepsilon=\frac{C_\omega}{D \cdot 1000},$$ cf. Corollary \ref{corollary other chain complex perturbed}. It is clear that there exists a section $\theta^- \colon \mathcal{A}^- \to \Omega^1(M)$ sending $[\omega]$ to $\omega$ and $b_l$ to $\beta_l$. In particular
$$\Vert \theta^-_a- \omega \Vert < \varepsilon, \quad \forall a \in \mathcal{A}^-.$$ Analogously to the construction of the $\beta_l$'s, one can define rational one-forms close to $\omega$ that do not vanish on $\ker[\omega]$. Including some of those cohomology classes allows us to extend $\mathcal{A}^-$ to $\mathcal{A}$ so that $\widetilde{M}_{\mathcal{A}}=\widehat{M}$ with a section $\theta \colon \mathcal{K} \to \Omega^1(M)$ that extends $\theta^-$ and is still $\varepsilon$-close to $\omega$. By choice of $\varepsilon$ we can invoke Corollary \ref{corollary other chain complex perturbed} and define a perturbed section $$\vartheta^\omega \colon \mathcal{A} \to \Omega^1(M)$$ with $(\omega,g)$ as the underlying reference pair.
But now we are in a position to use Theorem \ref{theorem Polytop Novikov Principle} and Remark \ref{remark PNP for small sections is GOOD} with $\mathcal{B}:=\langle b_1,\dots,b_r \rangle$ to obtain
\begin{align*}
\mathrm{CN}_\bullet(\vartheta^\omega_a,\mathcal{A}|_{\mathcal{B}}) \simeq C_\bullet(\widehat{M}) \otimes_{\Z[G]} \mathrm{Nov}(\mathcal{A}|_{\mathcal{B}}) = C_\bullet(\widehat{M}) \otimes_{\Z[G]} \bigcap_{l=1}^r\mathrm{Nov}(\mathcal{A}|_{b_l}).
\end{align*}
The last equality follows from the $\Z$-analogue of Lemma \ref{lemma vertex determien CN polytope}. Note that $\mathrm{Nov}(\mathcal{K}|_{b_l})=\widehat{\Z}[G]^{b_l} = \widehat{\Z}[G]^{q \cdot b_l}$, therefore the Novikov ring on the RHS above does coincide with $\widehat{\Lambda}_{\Psi}$. By definition of the perturbation $\vartheta^\omega$ we get $\vartheta^\omega_{[\omega]}=\omega$, which finally concludes the proof.

\end{proof}

\bibliographystyle{abbrv}
\bibliography{Bibliography_Master}

\begin{thebibliography}{10}

\bibitem{Banyaga2004}
A.~Banyaga and D.~Hurtubise.
\newblock {\em Lectures on {M}orse homology}, volume~29 of {\em Kluwer Texts in
  the Mathematical Sciences}.
\newblock Kluwer Academic Publishers Group, Dordrecht, 2004.

\bibitem{banyaga2019}
A.~Banyaga, D.~Hurtubise, and P.~Spaeth.
\newblock Twisted morse complexes, 2019.

\bibitem{benedetti2016}
G.~Benedetti.
\newblock The contact property for symplectic magnetic fields on {$S^2$}.
\newblock {\em Ergodic Theory Dynam. Systems}, 36(3):682--713, 2016.

\bibitem{eilenberg1947}
S.~Eilenberg.
\newblock Homology of spaces with operators. {I}.
\newblock {\em Trans. Amer. Math. Soc.}, 61:378--417; errata, 62, 548 (1947),
  1947.

\bibitem{farber2004}
M.~Farber.
\newblock {\em Topology of closed one-forms}, volume 108 of {\em Mathematical
  Surveys and Monographs}.
\newblock American Mathematical Society, Providence, RI, 2004.

\bibitem{groman2018symplectic}
Y.~Groman and W.~J. Merry.
\newblock The symplectic cohomology of magnetic cotangent bundles, 2018.

\bibitem{HoferSalamon1995}
H.~Hofer and D.~A. Salamon.
\newblock Floer homology and {N}ovikov rings.
\newblock In {\em The {F}loer memorial volume}, volume 133 of {\em Progr.
  Math.}, pages 483--524. Birkh\"{a}user, Basel, 1995.

\bibitem{hutchings2008}
M.~Hutchings.
\newblock Floer homology of families. {I}.
\newblock {\em Algebr. Geom. Topol.}, 8(1):435--492, 2008.

\bibitem{latour1994}
F.~Latour.
\newblock Existence de {$1$}-formes ferm\'{e}es non singuli\`eres dans une
  classe de cohomologie de de {R}ham.
\newblock {\em Inst. Hautes \'{E}tudes Sci. Publ. Math.}, (80):135--194 (1995),
  1994.

\bibitem{li2009}
T.~Li et~al.
\newblock {\em Topology of closed 1-forms on manifolds with boundary}.
\newblock PhD thesis, Durham University, 2009.

\bibitem{May1999}
J.~P. May.
\newblock {\em A concise course in algebraic topology}.
\newblock Chicago Lectures in Mathematics. University of Chicago Press,
  Chicago, IL, 1999.

\bibitem{Novikov1981}
S.~P. Novikov.
\newblock Multivalued functions and functionals. {A}n analogue of the {M}orse
  theory.
\newblock {\em Dokl. Akad. Nauk SSSR}, 260(1):31--35, 1981.

\bibitem{Novikov1982}
S.~P. Novikov.
\newblock The {H}amiltonian formalism and a multivalued analogue of {M}orse
  theory.
\newblock {\em Uspekhi Mat. Nauk}, 37(5(227)):3--49, 248, 1982.

\bibitem{Ono2005}
K.~Ono.
\newblock Floer-{N}ovikov cohomology and symplectic fixed points.
\newblock volume~3, pages 545--563. 2005.
\newblock Conference on Symplectic Topology.

\bibitem{pajitnov1995}
A.~V. Pajitnov.
\newblock On the {N}ovikov complex for rational {M}orse forms.
\newblock In {\em Annales de la Facult{\'e} des sciences de Toulouse:
  Math{\'e}matiques}, volume~4, pages 297--338, 1995.

\bibitem{Pajitnov1996}
A.~V. Pajitnov.
\newblock Incidence coefficients in the {N}ovikov complex for {M}orse forms:
  rationality and exponential growth properties, 1996.

\bibitem{Pajitnov2016}
A.~V. Pajitnov.
\newblock {\em Circle-valued {M}orse theory}, volume~32 of {\em De Gruyter
  Studies in Mathematics}.
\newblock Walter de Gruyter \& Co., Berlin, 2006.

\bibitem{Pajitnov2019}
A.~V. Pajitnov.
\newblock On the conical novikov homology.
\newblock {\em European Journal of Mathematics}, Dec 2019.

\bibitem{pellegrini2020}
A.~Pellegrini.
\newblock Dynamical {S}ymplectic {N}ovikov {H}omology.
\newblock {\em In preparation}, 2020.

\bibitem{Pozniak1999}
M.~Po\'zniak.
\newblock Floer homology, {N}ovikov rings and clean intersections.
\newblock In {\em Northern {C}alifornia {S}ymplectic {G}eometry {S}eminar},
  volume 196 of {\em Amer. Math. Soc. Transl. Ser. 2}, pages 119--181. Amer.
  Math. Soc., Providence, RI, 1999.

\bibitem{ritter2009}
A.~F. Ritter.
\newblock Novikov-symplectic cohomology and exact {L}agrangian embeddings.
\newblock {\em Geom. Topol.}, 13(2):943--978, 2009.

\bibitem{Schütz2002}
D.~Sch\"{u}tz.
\newblock Gradient flows of closed 1-forms and their closed orbits.
\newblock {\em Forum Math.}, 14(4):509--537, 2002.

\bibitem{Sikorav1987}
J.-C. Sikorav.
\newblock {\em Homologie de {N}ovikov associ{\'e}e {\`a} une classe de
  cohomologie reelle de degr{\'e} un}.
\newblock PhD thesis, Universit{\'e} Paris-Sud (Orsay), 1987.

\bibitem{Whitehead1978}
G.~W. Whitehead.
\newblock {\em Elements of homotopy theory}, volume~61 of {\em Graduate Texts
  in Mathematics}.
\newblock Springer-Verlag, New York-Berlin, 1978.

\bibitem{Whitehead1962}
J.~H.~C. Whitehead.
\newblock Convex regions in the geometry of paths.
\newblock In I.~JAMES, editor, {\em Differential Geometry}, pages 223 -- 232.
  Pergamon, 1962.

\bibitem{zhang2019}
J.~Zhang.
\newblock Symplectic structure perturbations and continuity of symplectic
  invariants.
\newblock {\em Algebr. Geom. Topol.}, 19(7):3261--3314, 2019.

\end{thebibliography}

\end{document}